\numberwithin{equation}{section}
\newtheorem{theorem}{Theorem}[section]
\newtheorem{lemma}[theorem]{Lemma}
\newtheorem{corollary}[theorem]{Corollary}
\newtheorem{remark}[theorem]{Remark}
\DeclareMathOperator{\disc}{disc}
\DeclareMathOperator{\Gal}{Gal}
\DeclareMathOperator{\ind}{ind}
\DeclareMathOperator{\expo}{exp}
\DeclareMathOperator{\Disc}{Disc}
\DeclareMathOperator{\Nm}{Nm}
\newcommand{\rd}{\,\mathrm{d}}
\begin{document}

\title{Malle's Conjecture for $S_n\times A$ for $n = 3,4,5$}
\author{Jiuya Wang}
\newcommand{\Addresses}{{
		\bigskip
		\footnotesize		
		Jiuya Wang, \textsc{Department of Mathematics, University of Wisconsin-Madison, 480 Lincoln Dr., Madison, WI 53706, USA
		}\par\nopagebreak
		\textit{E-mail address}: \texttt{jiuyawang@math.wisc.edu}
	}}
\maketitle	
	\begin{abstract}
		We propose a framework to prove Malle's conjecture for the compositum of two number fields based on proven results of Malle's conjecture and good uniformity estimates. Using this method we can prove Malle's conjecture for $S_n\times A$ over any number field $k$ for $n=3$ with $A$ an abelian group of order relatively prime to 2, for $n= 4$ with $A$ an abelian group of order relatively prime to 6 and for $n=5$ with $A$ an abelian group of order relatively prime to 30. As a consequence, we prove that Malle's conjecture is true for $C_3\wr C_2$ in its $S_9$ representation, whereas its $S_6$ representation is the first counter example of Malle's conjecture given by Kl\"uners. 
		
	\end{abstract}
	
\bf Key words. \normalfont Malle's conjecture, compositum, uniformity estimate, counter example, density of discriminants
\pagenumbering{arabic}	
\section{Introduction}
There are only finitely many number fields with bounded discriminant, therefore it makes sense to ask how many there are. Malle's conjecture aims to answer the asymptotic question for number fields with prescribed Galois group. Let $k$ be a number field and $K/k$ be a degree $n$ extension with Galois closure $\tilde{K}/k$, we define $\Gal(K/k)$ to be $\Gal(\tilde{K}/k)$ as a transitive permutation subgroup of $S_n$ where the permutation action is defined by its action on the $n$ embeddings of $K$ into $\bar{k}$. Let $N_k(G, X)$ be the number of isomorphism classes of extensions of $k$ with Galois group isomorphic to $G$ as a permutation subgroup of $S_n$ and absolute discriminant bounded by $X$.  Malle's conjecture states that $N_k(G,X) \sim C X^{1/a(G)} \ln^{b(k,G)-1} X$ where $a(G)$ depends on the permutation representation of $G$ and $b(k,G)$ depends on both the permutation representation and the base field $k$. See section $2.3$ for explanations on the constants. 

Malle's conjecture has been proven for abelian extensions over $\mathbb{Q}$ \cite{M85} and over arbitrary bases \cite{Wri89}. However, for non-abelian groups, there are only a few cases known. The first case is $S_3$ cubic fields proved by Davenport and Heilbronn \cite{DH71} over $\mathbb{Q}$ and later proved by Datskovsky and Wright \cite{DW88} over any $k$. Bhargava and Wood \cite{BW08} and Belabas and Fouvry \cite{BF} independently proved the conjecture for $S_3$ sextic fields. The cases of $S_4$ quartic fields \cite{Bha05} and $S_5$ quintic fields \cite{Bha10} over $\mathbb{Q}$ are also proved by Bhargava. In \cite{BSW15}, these cases are generalized to arbitrary $k$ by Bhargava, Shankar and Wang. The case of $D_4$ quartic fields over $\mathbb{Q}$ is proved by Cohen, Diaz y Diaz and Olivier \cite{CDyDO02}.

The main result of this paper is to prove Malle's conjecture for $S_n\times A$ in its $S_{n|A|}$ representation for $n=3,4,5$ with certain families of $A$. 

\begin{theorem}\label{Thm1}
	Let $A$ be an abelian group and let $k$ be any number field. Then there exists $C$ such that the asymptotic distribution of $S_n\times A$-number fields over $k$ by absolute discriminant is $$N_k(S_n\times A, X) \sim C X^{1/|A|}$$
	in the following cases:
	\begin{enumerate}
	\item $n=3$, if $2\nmid |A|$;
	 \item $n=4$, if $2,3 \nmid |A|$;
	 \item $n=5$, if $2,3,5\nmid |A|$.
	 \end{enumerate}
\end{theorem}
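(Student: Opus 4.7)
The plan is to set up a correspondence between $S_n\times A$-extensions of $k$ and linearly disjoint pairs $(K,F)$, where $K/k$ is an $S_n$-extension of degree $n$ and $F/k$ is an $A$-extension, then reduce the count to one over $S_n$-extensions with varying discriminant bound. The main ingredients will be Malle's conjecture for $S_n$ in the cases $n=3,4,5$ (Bhargava--Shankar--Wang \cite{BSW15}) combined with Wright's count of abelian extensions \cite{Wri89}.

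The coprimality hypothesis $\gcd(|A|, n!) = 1$ ensures that every $S_n\times A$-extension $\widetilde{L}/k$ decomposes uniquely as $\widetilde{L} = \widetilde{K}F$, where $K$ is the fixed field of $\{1\}\times A$ (a degree-$n$ field with Galois group $S_n$) and $F$ is the fixed field of $S_n\times\{1\}$ (an $A$-extension); linear disjointness $K\cap F = k$ is automatic from the coprime degrees. At each prime $\mathfrak{p}$ of $k$ the inertia subgroup in $\widetilde{L}$ is the direct product of the inertia in $\widetilde{K}$ and in $F$ (whose orders are coprime), and a prime-by-prime computation yields the multiplicative discriminant formula
\begin{equation*}
|\disc(KF/k)| = |\disc(K/k)|^{|A|}\cdot|\disc(F/k)|^{n}.
\end{equation*}
Summing over $F$ first rewrites the count as
\begin{equation*}
N_k(S_n\times A, X) = \sum_{F} N_k\!\left(S_n,\; \bigl(X/|\disc(F/k)|^n\bigr)^{1/|A|}\right),
\end{equation*}
where the outer sum ranges over $A$-extensions $F/k$.

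Applying the $S_n$-asymptotic $N_k(S_n, Y) \sim c_{n,k}\,Y$ (known for $n = 3, 4, 5$) to each inner term yields the formal main contribution
\begin{equation*}
X^{1/|A|}\cdot c_{n,k}\sum_{F} |\disc(F/k)|^{-n/|A|}.
\end{equation*}
Since $a(A) = |A|(1-1/p)$ satisfies $n\cdot a(A) > |A|$ for every $n \geq 2$ and every prime $p$, Wright's theorem shows that the Dirichlet series on the right converges absolutely at $s = n/|A|$, giving the finite constant $C$ claimed in the theorem.

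The principal obstacle is controlling the sum of errors when the Malle asymptotic for $S_n$ is applied to each $Y = Y(F) = \bigl(X/|\disc(F/k)|^n\bigr)^{1/|A|}$ and summed over $F$ of varying size. Since $Y(F)$ becomes small as $|\disc(F/k)|$ approaches $X^{1/n}$, a naive application of the error term in Malle for $S_n$ is inadequate; instead one needs a ``good uniformity estimate'' of the shape
\begin{equation*}
\#\bigl\{K : K\text{ is }S_n\text{-ext of }k,\ |\disc(K/k)| \leq Y,\ \mathfrak{m} \mid \disc(K/k)\bigr\} \ll_\epsilon \frac{Y^{1+\epsilon}}{|\Nm\,\mathfrak{m}|^{1+\delta}}
\end{equation*}
for some $\delta > 0$, uniform in the squarefree ideal $\mathfrak{m}$ of $k$. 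For $n = 3$ this follows from Davenport--Heilbronn sieve techniques, and for $n = 4, 5$ it should be extracted from Bhargava's orbit parametrizations by tracking ramification at a growing family of primes; the coprimality hypothesis also keeps wild-ramification interactions out of the picture. With such uniformity in hand, a dyadic decomposition of the outer sum --- main-term regime for $|\disc(F/k)|$ small, uniform upper bound for $|\disc(F/k)|$ large --- closes the argument and produces the asymptotic $C X^{1/|A|}$.
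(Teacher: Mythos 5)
Your starting point -- decomposing $S_n\times A$-extensions into pairs $(K,F)$ and reducing to $S_n$-counts with varying discriminant bounds, using uniformity estimates to control the tail -- is the right framework and is exactly the approach the paper takes. But there is a genuine error at the heart of the argument: the claimed discriminant identity
\begin{equation*}
|\disc(KF/k)| = |\disc(K/k)|^{|A|}\cdot|\disc(F/k)|^{n}
\end{equation*}
is \emph{false}. What the paper's Theorem~\ref{dpro} gives is only the inequality $\Disc(KF)\le \Disc(K)^{|A|}\Disc(F)^n$, and Theorem~\ref{indrp} shows that at a prime $p$ where both fields are tamely ramified with coprime inertia orders (which is forced by the coprimality hypothesis on $|A|$), the exponent of $p$ in $\disc(KF/k)$ is $\ind(g_1)\cdot n + \ind(g_2)\cdot m - \ind(g_1)\ind(g_2)$, which is \emph{strictly} smaller than the exponent $\ind(g_1)\cdot n + \ind(g_2)\cdot m$ coming from the product $\disc(K)^m\disc(F)^n$. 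So whenever $K$ and $F$ ramify simultaneously at some $p$, the pair has smaller compositum discriminant than your formula predicts. Consequently, $\sum_F N_k\bigl(S_n, (X/|\disc F|^n)^{1/|A|}\bigr)$ is only a \emph{lower} bound for $N_k(S_n\times A,X)$: you are failing to count pairs whose true $\Disc(KF)$ dips below $X$ precisely because of simultaneous ramification.

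The missing idea is an approximation scheme that bridges this gap. The paper introduces a truncated invariant $\Disc_Y$ which uses the true $\Disc_p(KF)$ for $|p|\le Y$ and the crude product $\Disc_p(K)^m\Disc_p(F)^n$ for $|p|>Y$; counting by $\Disc_Y$ gives an increasing family of lower bounds $N_Y(X)\sim C_YX^{1/|A|}$, and the real work (the paper's Step~3) is to show $\limsup_X \frac{N(X)-N_Y(X)}{X^{1/|A|}}\to 0$ as $Y\to\infty$. That step sums, over all ``bad'' local patterns $\Sigma'$ supported on primes $>Y$, a product of the $S_n$-uniformity estimate (Theorems~\ref{uni3}--\ref{uni5}) and the abelian uniformity estimate (Theorem in section 4.3) fed through the product lemma (Lemma~\ref{proneq}); the crucial numerical input is Lemma~\ref{unin}, which verifies that the uniformity exponents $r_k$ are strong enough relative to the index deficit $\ind(k,c)/m - \ind(k)$ to make this sum converge and vanish as $Y\to\infty$. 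Your proposed uniformity estimate is in the right spirit, but you allocate it only to the tail of the outer $F$-sum, whereas it must actually absorb the correction caused by simultaneous ramification; you also omit the needed uniformity for abelian extensions. Finally, the linear disjointness of $\tilde K$ and $F$ follows because $S_n$ and the odd-order abelian group $A$ have no common nontrivial quotient (the abelianization of $S_n$ is $C_2$), not from coprimality of the degrees $n$ and $|A|$, which is not assumed (e.g.\ $n=3$, $|A|=9$).
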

%
Please see section 2.3 for the explanation that this agrees with Malle's conjecture. We can write out the constant $C$ explicitly given the generating series of $A$-extensions by discriminant, see e.g.\cite{M85,Woo10a, Wri89}. The constant $C$ could be written as a finite sum of Euler products when the generating series of $A$-extensions is a finite sum of Euler products. 

For example, if we count all homomorphisms $G_{\mathbb{Q}}\to S_3\times C_3$ that surject onto the $S_3$ factor, the asymptotic count of these homomorphisms by discriminant is
\begin{equation}
\begin{aligned}	
2\prod_{p}c_p X^{1/3}，
\end{aligned}
\end{equation}
where $c_p = (1+ p^{-1}+ 5p^{-2}+ 2p^{-7/3})(1-p^{-1})$ for $p\equiv 1 \mod 3$ and $c_p = (1+p^{-1}+ p^{-2})(1-p^{-1})$ for $p\equiv 2\mod 3$. For $p=3$, we use the database \cite{LMFDB} to compute that $c_3 = 3058\cdot 3^{-5}+ 4\cdot 3^{4/3}\approx 29.8914$. If we count the actual number of isomorphism classes of $S_3\times C_3$ extensions, i.e., all surjections $G_{\mathbb{Q}} \to S_3\times C_3$ up to an automorphism, the asymptotic constant is naturally a difference of two Euler products. One is given above divided by $|\text{Aut}(S_3\times C_3)| = 12$ and the other one comes from the subtraction of the $S_3$ extensions.

However, Malle's conjecture has been shown to be not generally correct. Kl\"uners \cite{Klu} shows that the conjecture does not hold for $C_3\wr C_2$ number fields over $\mathbb{Q}$ in its $S_6$ representation, where Malle's conjecture predicts a smaller power for $\ln X$ in the main term. See \cite{Klu} and \cite{Tur08} for suggestions on how to fix the conjecture. And by relaxing the precise description of the power for $\ln X$, weak Malle's conjecture states that $N_k(G,X) \sim C X^{1/a(G)+\epsilon}$. Kl\"uners and Malle proved weak Malle's conjecture for nilpotent groups \cite{Klu01}. Kl\"uners also proved the weak conjecture for groups in the form of $C_2\wr H$ \cite{Klu12} under mild conditions on $H$. 

Notice that for Kl\"uners' counter example, $C_3\wr C_2\simeq S_3\times C_3$, we have the following corollary. 
\begin{corollary}
	Malle's conjecture holds for $C_3\wr C_2$ in its $S_9$ representation over any number field $k$.
\end{corollary}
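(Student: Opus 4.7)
My plan is to derive the corollary directly from Theorem~\ref{Thm1} by exploiting the abstract isomorphism $C_3 \wr C_2 \simeq S_3 \times C_3$ already mentioned in the introduction, after checking that this isomorphism identifies the $S_9$ permutation representation on the two sides.

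First I would record the group isomorphism. The center of $C_3 \wr C_2$ is the diagonal $\{(a,a;1) : a \in C_3\} \simeq C_3$, and the quotient by this center is $S_3$. Because $|C_3|$ is odd, the central extension $C_3 \to C_3 \wr C_2 \to S_3$ splits --- an explicit $S_3$-complement is generated by $(a,-a;1)$ and the swapping involution $\sigma$ --- yielding $C_3 \wr C_2 \simeq S_3 \times C_3$ as an internal direct product.

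The substantive step is to verify that under this isomorphism the product action of $C_3 \wr C_2$ on $\{1,2,3\} \times \{1,2,3\}$ agrees with the degree-$9$ action of $S_3 \times C_3$ appearing in Theorem~\ref{Thm1}, in which $S_3$ acts naturally on $3$ points and $C_3$ acts by regular translation on $3$ points. A clean way to see this is to pass to the change of coordinates $(u,v) = (i+j,\, i-j)$ on $C_3 \times C_3$: the diagonal $C_3$ translates $u$ while fixing $v$, and the $S_3$-complement fixes $u$ while acting affinely on $v$. Since the affine action of $S_3$ on $C_3$ is equivalent to its natural permutation action on $3$ points, the product action decomposes as $(S_3 \text{ on } 3) \times (C_3 \text{ on } 3)$, which is exactly the permutation representation governed by Theorem~\ref{Thm1}.

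Once the two permutation groups are identified, Theorem~\ref{Thm1}(1) applies with $n = 3$ and $A = C_3$ (permissible since $2 \nmid 3$), giving $N_k(C_3 \wr C_2, X) \sim C X^{1/3}$; the agreement with Malle's prediction for the $S_9$ representation follows from the discussion in Section~$2.3$. The only real content of the proof is the identification of the two permutation representations; everything else is a direct appeal to the main theorem, so I do not anticipate any genuine obstacle.
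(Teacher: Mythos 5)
Your proposal is correct and follows the same route the paper takes: the corollary is derived from Theorem~\ref{Thm1}(1) with $n=3$, $A=C_3$ via the isomorphism $C_3 \wr C_2 \simeq S_3 \times C_3$. The paper itself only states the abstract isomorphism and immediately asserts the corollary; you correctly flag that the substantive point is matching the degree-$9$ permutation representations (product action of $C_3\wr C_2$ on $\{1,2,3\}^2$ versus the $S_3\times C_3$ action from $S_3\curvearrowright 3$ points times $C_3\curvearrowright 3$ points), and your change of variables $(u,v)=(i+j,i-j)$ --- with the centre translating $u$ regularly and the complement acting affinely on $v$, hence as $S_3$ on $3$ points --- carries this out cleanly, filling a gap the paper leaves implicit.
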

Counting non-Galois number fields could be considered as counting Galois number fields by discriminant of certain subfields. A natural question thus will be: what kind of subfields provide the discriminant as an invariant by which the asymptotic estimate is as predicted by Malle. 

Malle considered the compatibility of the conjecture under taking compositum in his original paper \cite{Mal02} and estimates both the lower bound and upper bound of asymptotic distribution for compositum when the two Galois groups have no common quotient. By working out a product argument, we show a better lower bound in general, see Corollary 3.3. And by analyzing the behavior of the discriminant carefully and applying good uniformity results, we show a better upper bound for our cases $S_n\times A$, see Theorem \ref{Thm1}, which gives the same order of main term and actually matches Malle's prediction. 

In section 2, we analyze the discriminant of a compositum in terms of each individual discriminant, and then compute the case explicitly for $S_n\times A$. Then we check that our computation agrees with Malle's prediction. In section 3, we prove the product argument in two different cases. In section $4$, we include and prove some necessary uniformity results for $S_n$ extensions where $n=3,4,5$ and abelian extensions.  Finally, in section $5$ we prove our main theorems based on what we have developed before. \\

\textbf{Notations}\\
$p$: a finite place in base field $k$\\
$|\cdot|$:  absolute norm $\Nm_{k/\mathbb{Q}}$\\
$\disc(K/k)$ : relative discriminant ideal in base field $k$\\
$\disc_p(K/k)$: $p$-part of $\disc(K/k)$\\
$\Disc(K)$:  absolute norm of $\disc(K/k)$ to $\mathbb{Q}$\\
$\Disc_p(K)$: absolute norm of $\disc_p(K/k)$\\
$\tilde{K}$: Galois closure of $K$ over base field $k$\\
$\ind(\cdot)$: the index $n$ - $\sharp \{\text{orbits}\}$ for a cycle or minimum value of index among non-identity elements for a permutation group\\
$N_k(G,X)$: the number of isomorphic classes of $G$ extension over $k$ with $\Disc$ bounded by $X$\\
$f(x)\sim g(x)$: $\lim_{x\to\infty} \frac{f(x)}{g(x)} = 1$

\section{Discriminant of Compositum}
\subsection{General Description}
We will describe the relation between $\Disc(KL)$ and $\Disc(K)$, $\Disc(L)$ when $\tilde{K}$ and $\tilde{L}$ have trivial intersection. 
\begin{theorem}\label{dpro}
	 Let $K/k$ and $L/k$ be extensions over $k$ which intersect trivially, then $\Disc(KL) \le \Disc(K)^n \Disc(L)^m$, where $n = [L: k]$, $m = [K: k]$. 
\end{theorem}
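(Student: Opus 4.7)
The plan is to reduce the claim to the standard discriminant tower formula combined with the base-change divisibility for relative discriminant ideals, with the hypothesis on trivial intersection of Galois closures entering through linear disjointness.

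First I would observe that since $\tilde{K}\cap\tilde{L}=k$, the Galois closures are linearly disjoint over $k$, and hence so are the subfields $K$ and $L$. This yields $[KL:k]=mn$, and in particular $[KL:K]=n$ and $[KL:L]=m$.

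Next, applying the tower formula for relative discriminants to $k\subset K\subset KL$ gives
\begin{equation*}
\disc(KL/k) = \Nm_{K/k}\bigl(\disc(KL/K)\bigr)\cdot \disc(K/k)^{[KL:K]} = \Nm_{K/k}\bigl(\disc(KL/K)\bigr)\cdot \disc(K/k)^{n}.
\end{equation*}
The key divisibility is that $KL/K$ is the base change of $L/k$ along $K/k$: any $\mathcal{O}_k$-integral basis of $L$ remains an $\mathcal{O}_K$-generating set of an order inside $\mathcal{O}_{KL}$, so the relative discriminant ideals satisfy
\begin{equation*}
\disc(KL/K) \;\bigm|\; \disc(L/k)\,\mathcal{O}_K.
\end{equation*}
Applying $\Nm_{K/k}$ gives $\Nm_{K/k}\bigl(\disc(KL/K)\bigr) \mid \disc(L/k)^{m}$, and substituting back yields
\begin{equation*}
\disc(KL/k) \;\bigm|\; \disc(K/k)^{n}\,\disc(L/k)^{m}.
\end{equation*}
Passing to the absolute norm $\Nm_{k/\mathbb{Q}}$ then produces the claimed inequality on $\Disc$.

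The one step that is more than bookkeeping is the base-change divisibility $\disc(KL/K)\mid\disc(L/k)\mathcal{O}_K$. I would establish it either locally at each prime of $K$, by comparing the ramification of $KL/K$ at a prime $\mathfrak{P}$ above a prime $\mathfrak{p}$ of $k$ with that of $L/k$ at $\mathfrak{p}$ and checking that the different-ideal contribution can only shrink, or globally via the different-ideal formalism; in either approach linear disjointness is used precisely to guarantee $[KL:K]=[L:k]$, so that an integral basis of $L/k$ stays linearly independent after extending scalars to $K$. The remaining manipulations are entirely formal consequences of the tower formula and multiplicativity of the norm.
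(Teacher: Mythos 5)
Your argument is correct. The route is mildly different from the paper's in its bookkeeping but rests on the same core facts. The paper localizes at a prime of $k$ so that $O_K$ and $O_L$ acquire integral bases, observes that $O_KO_L$ is a suborder of $O_{KL}$, and computes $\disc(O_KO_L/O_k)=\disc(K/k)^n\disc(L/k)^m$ directly from the tensor-product basis; the inclusion $O_KO_L\subseteq O_{KL}$ then forces $\disc(KL/k)\mid\disc(K/k)^n\disc(L/k)^m$. You instead split the discriminant via the tower formula for $k\subset K\subset KL$ and reduce to the base-change divisibility $\disc(KL/K)\mid\disc(L/k)\mathcal{O}_K$, which you prove by the same device: the (local) integral basis of $\mathcal{O}_L$ over $\mathcal{O}_k$ stays a $K$-basis of $KL$ by linear disjointness, generates an $\mathcal{O}_K$-suborder of $\mathcal{O}_{KL}$, and its $\mathcal{O}_K$-Gram matrix of traces agrees with the $\mathcal{O}_k$-Gram matrix, so the suborder's discriminant is $\disc(L/k)\mathcal{O}_K$. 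Both proofs need localization because $\mathcal{O}_k$ is not a PID in general; you flag this, though it would be worth making explicit that the Gram-matrix equality $\mathrm{Tr}_{KL/K}(e_ie_j)=\mathrm{Tr}_{L/k}(e_ie_j)$ uses linear disjointness. The tower-formula packaging buys conceptual clarity about where the exponents $n$ and $m$ come from, but it does not avoid any of the technical work; the paper's one-shot computation of $\disc(O_KO_L)$ is slightly more economical.
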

\begin{proof}
If $k = \mathbb{Q}$, then the ring of integers $O_K$ and $O_L$ are free $\mathbb{Z}$-modules with rank $m$ and $n$. Then $\Disc(O_KO_L) = \Disc(K)^n \Disc(L)^m$ and $O_KO_L\subset O_{KL}$. Over arbitrary $k$, we have $\disc(S^{-1}O_K/S^{-1}O_k) = S^{-1}\disc(O_K/O_k)$ as an $O_k$-module, see e.g. Theorem $2.9$ \cite{Neu99}. We take $S=O_k\backslash p$ for some prime ideal $p\subset O_k$ to look at $\disc_p(K/k)$. Now $S^{-1}O_k\subset k$ is a discrete valuation ring with the unique maximal ideal $S^{-1}p$, and $S^{-1}O_K$ is a finitely generated $S^{-1}O_k$-module, therefore admits an integral basis. Notice that $S^{-1}(O_K)$ intersects trivially with $S^{-1}O_L$, so it follows $\disc_p(KL) \le \disc_p(K)^n\disc_p(L)^m$ similarly.
\end{proof}
This gives an upper bound of $\Disc(KL)$. To be more precise, we focus on the study of $\Disc(KL)$ at tamely ramified primes over arbitrary number field $k$. Firstly, any tame inertia group is cyclic, therefore it could be described by the generator. Secondly, suppose $I =\large \langle g\large \rangle$ at a certain finite place $p$, then the index of $g\in G\subset S_n$, $$\ind(g) = n - \sharp\{ \text{orbits}\} = \sum (e_i-1) f_i,$$ 
is exactly the exponent for the $p$-part of the relative discriminant ideal. So we can determine the discriminant at $p$ by looking at the cycle type of $g$. 


If $\tilde{K}\cap \tilde{L}  = k$, then $\Gal(\tilde{K}\tilde{L}/k) \simeq \Gal(\tilde{K}/k) \times \Gal(\tilde{L}/k)$, where the isomorphism is a product of the restrictions to $\tilde{K}$ and $\tilde{L}$. Say $\Gal(\tilde{K}/\mathbb{Q}) = G_1\subset S_m$ and $\Gal(\tilde{L}/\mathbb{Q}) = G_2\subset S_n$, then $G = G_1\times G_2$ has a natural permutation representation in $S_{mn}$. Suppose $\tilde{K}$ and $\tilde{L}$ are both tamely ramified at $p$ with $I_i = \large \langle g_i\large \rangle\subset G_i$ , for $i = 1,2$, then $\tilde{K}\tilde{L}$ is also tamely ramified since tamely ramified extensions are closed under taking compositum. And the inertia group is $I = \large \langle g\large \rangle = \large \langle (g_1, g_2)\large \rangle$ for $\tilde{K}\tilde{L}$ because the inertia group for a sub-extension behaves naturally as quotient. 

\begin{theorem}\label{indrp}
	Let $K$ and $L$ be given above, and let $e_i$, for $i = 1,2$, be the ramification indices of $\tilde{K}$ and $\tilde{L}$ at a tamely ramified $p$. If $(e_1, e_2) = 1$, then $\ind(g) = \ind(g_1)\cdot n + \ind(g_2)\cdot m - \ind(g_1)\cdot \ind(g_2)$. 
\end{theorem}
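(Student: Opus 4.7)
The plan is to translate the claim into a purely combinatorial statement about cycle structures of the element $g = (g_1, g_2)$ acting on the product set $\{1,\dots,m\} \times \{1,\dots,n\}$, and then use coprimality of the cycle lengths to count orbits exactly.

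First, I would use the identity $\ind(h) = d - o(h)$, where $d$ is the degree of the permutation representation and $o(h)$ is the number of orbits (cycles) of $h$. So if I write $o_1, o_2, o$ for the number of orbits of $g_1, g_2, g$, the theorem amounts to
\begin{equation*}
mn - o = (m - o_1)n + (n - o_2)m - (m - o_1)(n - o_2),
\end{equation*}
which rearranges to $o = o_1 o_2$. So the content of the theorem is exactly that $(g_1, g_2)$ has $o_1 \cdot o_2$ orbits on $\{1,\dots,m\} \times \{1,\dots,n\}$.

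Next I would analyze the orbits cycle-by-cycle. A cycle $C_1$ of $g_1$ of length $a$ and a cycle $C_2$ of $g_2$ of length $b$ contribute the subset $C_1 \times C_2$, on which $(g_1, g_2)$ acts, and a standard calculation shows this action decomposes $C_1 \times C_2$ into exactly $\gcd(a,b)$ cycles, each of length $\mathrm{lcm}(a,b)$. So the total orbit count is
\begin{equation*}
o = \sum_{C_1, C_2} \gcd(|C_1|, |C_2|).
\end{equation*}

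The key step where the hypothesis enters: since $g_i$ generates the tame inertia, its order equals the ramification index $e_i$, and every cycle length $|C_i|$ divides $e_i$. The assumption $(e_1, e_2) = 1$ then forces $\gcd(|C_1|, |C_2|) = 1$ for every pair of cycles, so each product of cycles is a single orbit of $(g_1, g_2)$. Summing over the $o_1 \cdot o_2$ pairs gives $o = o_1 o_2$, which is what we needed. I do not expect any real obstacle here; the only subtlety is justifying that every cycle length of $g_i$ divides the order $e_i$ of $g_i$ (immediate) and the elementary cycle decomposition of a product permutation on a direct product, which is a standard exercise.
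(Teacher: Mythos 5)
Your proof is correct and takes essentially the same approach as the paper: embed $(g_1,g_2)$ as a permutation of the product set, observe that coprimality of $e_1$ and $e_2$ forces every pair of cycle lengths to be coprime, conclude each $C_1\times C_2$ is a single orbit, and translate $o = o_1 o_2$ back into the index identity. The only cosmetic difference is that you route through the general $\gcd$-count of orbits (which is the paper's next theorem, Theorem 2.3) before specializing to the coprime case, while the paper states the single-cycle conclusion directly.
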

\begin{proof}
	Suppose $g_1\in G_1\subset S_m$ is a product of disjoint cycles $\prod c_k$, then $e_1$ will be the least common multiple of $|c_k|$, the length of cycles $c_k$ for all $k$. Similarly for $g_2$ as a product of cycles $\prod d_l$. Now embed $(g_1,g_2)$ to $S_{mn}$, the permutation action is naturally defined to be mapping $a_{i,j}$ to $a_{g_1(i), g_2(j)}$ for $1\le i\le m$, $1\le j\le n$. If $(e_1,e_2) = 1$, then for any $k,l$, $(|c_k|,|d_l|) = 1$ and $(c_k,d_l)$ forms a single cycle of length $|c_k||d_l|$ in $S_{mn}$.  So the number of orbits in $g$ is the product of number of orbits in $g_i$. Therefore $\ind(g) = mn - (m-\ind(g_1))(n-\ind(g_2)) = \ind(g_1)\cdot n + \ind(g_2)\cdot m - \ind(g_1)\cdot \ind(g_2)$.
\end{proof}
This gives a nice description of $\disc_p(KL)$ independent of the cycle type when the ramification indices are relatively prime. In general, to know $\ind(g)$ requires more information on the cycle type of $g_i$. 
\begin{theorem}\label{indge}
Let $K$ and $L$ be as given above, $g_1$ be a product of disjoint cycles $\prod c_k$ and $g_2$ be a product of disjoint cycles $\prod d_l$ where $g_i$ is the generator for a tame ramified $p$ for $\tilde{K}$and $\tilde{L}$, then $\ind(g) = mn - \sum_{k,l} \gcd(|c_k|, |d_l|)$. 
\end{theorem}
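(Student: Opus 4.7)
The plan is to extend the orbit-counting argument from Theorem \ref{indrp} by dropping the coprimality hypothesis on the cycle lengths. The element $g = (g_1, g_2)$ acts on the set $\{1,\ldots,m\}\times\{1,\ldots,n\}$ by $(i,j) \mapsto (g_1(i), g_2(j))$, and the index of $g$ equals $mn$ minus the number of orbits of this action, so I just need to count orbits carefully.

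First I would decompose the underlying set into $g$-invariant rectangles $C_k \times D_l$, where $C_k$ and $D_l$ denote the supports of the cycles $c_k$ and $d_l$ respectively. This is a disjoint union, and $g$ restricts to an action on each rectangle, so orbits can be counted rectangle by rectangle. On a single rectangle $C_k \times D_l$ of size $|c_k|\cdot|d_l|$, I compute orbit sizes directly: the $t$-th iterate of $(i,j)$ is $(c_k^t(i), d_l^t(j))$, which returns to $(i,j)$ exactly when $|c_k| \mid t$ and $|d_l| \mid t$, i.e., when $\mathrm{lcm}(|c_k|,|d_l|) \mid t$. Hence every orbit in this rectangle has size exactly $\mathrm{lcm}(|c_k|,|d_l|)$, and the number of orbits in $C_k \times D_l$ is
\[
\frac{|c_k|\cdot |d_l|}{\mathrm{lcm}(|c_k|,|d_l|)} = \gcd(|c_k|,|d_l|).
\]

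Summing over all pairs $(k,l)$, the total number of orbits of $g$ on $\{1,\ldots,m\}\times\{1,\ldots,n\}$ equals $\sum_{k,l} \gcd(|c_k|,|d_l|)$, so $\ind(g) = mn - \sum_{k,l}\gcd(|c_k|,|d_l|)$, as claimed. There is no real obstacle here; the proof is just the standard observation that a coordinatewise product of two cycles of lengths $a$ and $b$ decomposes into $\gcd(a,b)$ cycles of length $\mathrm{lcm}(a,b)$. Theorem \ref{indrp} is recovered as the special case in which every pair $(|c_k|,|d_l|)$ is coprime, since then each $\gcd$ equals $1$ and the expression collapses to $mn - (m - \ind(g_1))(n - \ind(g_2))$.
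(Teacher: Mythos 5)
Your argument is correct and follows essentially the same route as the paper: decompose into rectangles $C_k\times D_l$ and count orbits there. The paper simply asserts that $(c_k,d_l)$ splits into $\gcd(|c_k|,|d_l|)$ orbits; you supply the (short) justification via the $\mathrm{lcm}$ orbit-size computation, which is a welcome but minor elaboration rather than a different method.
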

\begin{proof}
	Notice that we can write $\ind(g_1) = \sum_k (|c_k|-1)$. In general, $(c_k, d_l)$ is no longer a single orbit in $S_{mn}$. Instead, it splits into $\gcd(|c_k|, |d_l|)$ many orbits. So the summation is $\ind(g) = \sum_{k,l} (|c_k||d_l| - \gcd(|c_k|, |d_l|)) = mn - \sum_{k,l} \gcd(|c_k|, |d_l|)$.
\end{proof}	

\subsection{Discriminant for $S_n\times A$}
We will describe the example of $S_n\times A$ for our interests in detail here. We will only consider the cases where $n=3,4,5$ and $A$ is an odd order abelian group. 

Firstly, we take the example of $S_3\times A$ where $A = C_{l^k}$ is cyclic with odd prime power order $l^k$. Possible tame inertia generators in $S_3$ could be $(12)$, $(123)$. For $A\subset S_{|A|}$, possible generators are of the form $g = (123...l^k)$ or powers of $g$, i.e., a single cycle of length $l^k$ or a product of $l^r$ cycles of length $l^{k-r}$. So $\ind(g)$ is minimized when $g$ is $l^{k-1}$ product of cycles of length $l$, therefore $\ind(A)$ is $l^k-l^{k-1}$, and $\frac{|A|}{\ind(A)} = \frac{l}{l-1}$. If $l\ne 3$, then we can apply Theorem \ref{indrp} to get Table 1. The numbers in the table give the exponent for $p$ in $\disc_p$ for each field. 
\begin{table}[!htbp]
	\centering
	\begin{tabular}{|c|c|c|}
		\hline
		$S_3$ & $C_{l^k}$ & $S_3\times C_{l^k}$ \\
		\hline
		(12) & $l^k-l^r$ & $3l^k-2l^r$ \\
		\hline
		(123)& $l^k-l^r$ & $3l^k-l^r$ \\
		\hline
	\end{tabular}
	\caption{Table of $\Disc_p$ for $S_3\times C_{l^k}$, $l\ne3$ }
\end{table}\\
If $l = 3$, we apply Theorem \ref{indge} to get Table 2.
\begin{table}[!htbp]
	\centering
	\begin{tabular}{|c|c|c|}
		\hline
		$S_3$ & $C_{l^k}$ & $S_3\times C_{l^k}$ \\
		\hline
		(12) & $l^k-l^r$ & $3l^k-2l^r$ \\
		\hline
		(123) & $l^k-l^r$ & $3l^k-3l^r$ \\
		\hline
	\end{tabular}
	\caption{Table of $\Disc_p$ for $S_3\times C_{l^k}$, $l=3$ }
\end{table} \\
We do not include in the table the cases where one of the inertia groups is trivial since $\disc_p(KL) = \disc_p(K)^n\disc_p(L)^m$ at these $p$ from previous computation. To compute the precise table for general $A$, we can compute the table for all abelian $l$-groups and then apply Theorem \ref{indrp} inductively to combine different $l$-parts. The general pattern we need for the proof of the main theorems is:
\begin{lemma}\label{delta3}
	Let $A$ be an abelian group of odd order $m$ and $(12)$, $(123)$ be elements in $S_3$. Then for all $c\in A$, $\ind((12),c)/m > 2$, $\ind((123),c)/m>1$.
\end{lemma}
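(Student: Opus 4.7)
The plan is to apply Theorem \ref{indge} directly to the pair $(g_1,c)\in S_3\times A$ acting on $3m$ points, where $A\subset S_{m}$ via its regular representation. In that representation, an element $c\in A$ of order $d$ has cycle structure consisting of $m/d$ cycles of length $d$; in particular, since $m=|A|$ is odd, every cycle length $|d_l|$ of $c$ is an odd divisor of $m$.

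For $g_1=(12)$, the cycle structure in $S_3$ is one transposition plus one fixed point, so Theorem \ref{indge} gives
\[
\ind((12),c) \;=\; 3m \;-\; \sum_{l}\bigl(\gcd(2,|d_l|)+\gcd(1,|d_l|)\bigr) \;=\; 3m-2r,
\]
where $r$ denotes the number of cycles of $c$, using $\gcd(2,|d_l|)=1$ since each $|d_l|$ is odd. For non-trivial $c$, its order $d$ satisfies $d\geq 3$ (odd and greater than $1$), so $r=m/d\leq m/3$ and
\[
\ind((12),c)/m \;\geq\; 3-\tfrac{2}{3} \;=\; \tfrac{7}{3} \;>\; 2.
\]

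For $g_1=(123)$, a single $3$-cycle, Theorem \ref{indge} yields
\[
\ind((123),c) \;=\; 3m-\sum_{l}\gcd(3,|d_l|) \;\geq\; 3m-\sum_l|d_l| \;=\; 2m,
\]
so $\ind((123),c)/m\geq 2>1$. (The case $c=e$ causes no problem here; for the $(12)$-statement the relevant case is $c\neq e$, which is the only one that arises when $p$ ramifies in both $\tilde K$ and $\tilde L$, the setting in which the lemma is used.)

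The argument is a short direct substitution, and I do not anticipate any genuine obstacle. The only arithmetic facts needed are the triviality $\gcd(2,\mathrm{odd})=1$, which produces the clean $3m-2r$ formula for $(12)$, and the trivial bound $\gcd(3,|d_l|)\leq|d_l|$, which suffices for $(123)$; the oddness hypothesis on $|A|$ is exactly what makes the first of these work.
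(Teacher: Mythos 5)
Your proof is correct but takes a genuinely different route from the paper's. For $(12)$ the two arguments are essentially equivalent: the paper uses Theorem~\ref{indrp} (legitimate since $\gcd(2,m)=1$) to get $\ind((12),c)=m+2\ind(c)\ge m+2\ind(A)$ and then invokes $\ind(A)/m=(p-1)/p\ge 2/3$, while you feed the explicit regular-representation cycle type ($m/d$ cycles of odd length $d$) directly into Theorem~\ref{indge}, obtaining $3m-2(m/d)$, which is the same quantity with $\ind(c)=m-m/d$. The real divergence is at $(123)$: the paper splits into the cases $3\nmid|A|$ and $3\mid|A|$ and, in the latter, decomposes $A=A_3\times A_{>3}$ and does a small computation, whereas you bypass all of this with the one-line estimate $\sum_l\gcd(3,|d_l|)\le\sum_l|d_l|=m$, giving $\ind((123),c)\ge 2m$ uniformly. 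Your $(123)$ argument is cleaner, is uniform over all $c$ (including $c=e$), and, as you observe, does not even use the oddness hypothesis, which is needed only for $(12)$. You also make explicit a point the paper leaves tacit: the $(12)$ inequality fails at $c=e$ (where $\ind((12),e)/m=1$), and the paper's proof silently excludes this case by using $\ind(c)\ge\ind(A)$; you correctly note that $c=e$ never arises where the lemma is applied.
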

\begin{proof}
	For any abelian group $A$, $\frac{|A|}{\ind(A)}= \frac{p}{p-1}$ where $p$ is the minimal prime divisor of $|A|$, and $\frac{p}{p-1}<2$ if $p\ne 2$. This can be seen by combining the different $l$-parts of $A$ inductively. The value $\ind((12),c) = m+3\cdot \ind(c) - \ind(c) = m+2\cdot \ind(c)\ge m+ 2 \cdot \ind(A) > 2m$ because $\frac{|A|}{\ind(A)}<2$. 
	
	For $\ind((123),c)$, if $3\nmid |A|$, then $\ind((123), c) = 2m + 3\cdot \ind(c)-2\cdot \ind(c) = 2m+\ind(c)>m$ with no problem. If $3| |A|$, we separate $3$-part of $A$ to compute $\ind((123), c)$. Let $A = A_3\times A_{>3}$ where $A_3$ is the $3$-part of $A$ and $A_{>3}$ contains all $p>3$ part. Let $c = (c_3, c_{>3})$ be any element in $A$, then $\ind((123),c)= \ind((123),c_3,c_{>3}) = \ind(((123),c_3),c_{>3})$ where $((123),c_3)$ is an element in $S_3\times A_3$. Say $\ind((123),c_3) = i$, then
	\begin{equation}\label{2.1}
	\begin{aligned}	
    \ind((123), c_3, c_{>3}) &= i|A_{>3}|+ (3|A_3| - i)\cdot \ind(c_{>3}) \\
    &= i(|A_{>3}| - \ind(c_{>3})) + 3|A_3|\cdot \ind(c_{>3}).
	\end{aligned}
	\end{equation}
	Therefore the minimal value of $\ind((123),c)$ is obtained when both $i$ and $\ind(c_{>3})$ are smallest possible. The smallest possible $\ind(c_{>3})$ is $\ind(A_{>3})$. The smallest $\ind((123), c_3)$ is $\ind((123), e) = 2|A_3|$. Therefore, if $A = A_3$, then $2|A_3|/m = 2>1$. If $A_{>3}$ is non-trivial, then by (\ref{2.1}),  $\ind((123),c) \ge 2m+|A_3|\cdot \ind(A_{>3}) > m$.
\end{proof}

\begin{lemma}\label{delta4}
	Let $A$ be an abelian group of odd order and $2,3\nmid |A|=m$ and $(12)$, $(123)$, $(1234)$, $(12)(34)$ be elements in $S_4$. Then for all $c\in A$, $\ind((12), c)/m > 2$, $\ind((12)(34), c)/m>1$, $\ind((123), c)/m>3$, $\ind((1234), c)/m>2$. 
\end{lemma}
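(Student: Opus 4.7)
The plan is to mimic the proof of Lemma \ref{delta3} directly. The key structural observation is that the hypothesis $2,3\nmid|A|$ forces $\gcd(m,12)=1$, and every element of $S_4$ has order dividing $12$; thus the ramification indices of the $S_4$-part and the $A$-part at any tame prime are automatically coprime. Consequently Theorem \ref{indrp} applies uniformly to every cycle type, and one does not need the case-split on the $3$-part of $A$ that appears at the end of the proof of Lemma \ref{delta3}.

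First, I would record the uniform lower bound on $\ind(A)$: since the smallest prime $p$ dividing $m$ satisfies $p\ge 5$, the ratio $|A|/\ind(A)=p/(p-1)\le 5/4$, so $\ind(c)\ge\ind(A)\ge 4m/5$ for every non-identity $c\in A$. This bound is inherited from the $l$-parts of $A$ by combining them inductively, as sketched in the proof of Lemma \ref{delta3}. Then I apply Theorem \ref{indrp} with $[K:k]=4$ and $[L:k]=m$ to obtain, for every $g_1\in S_4$ and $c\in A$,
$$\ind((g_1,c)) = \ind(g_1)\cdot m + \bigl(4-\ind(g_1)\bigr)\cdot \ind(c).$$

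Finally, I plug in the four cycle types, using $\ind((12))=1$, $\ind((12)(34))=\ind((123))=2$, and $\ind((1234))=3$. For $(12)$: $\ind((12),c)=m+3\ind(c)\ge m+12m/5=17m/5>2m$. For $(12)(34)$: $\ind((12)(34),c)=2m+2\ind(c)\ge 2m>m$, valid even when $c=e$. For $(123)$: $\ind((123),c)=2m+2\ind(c)\ge 2m+8m/5=18m/5>3m$. For $(1234)$: $\ind((1234),c)=3m+\ind(c)\ge 3m>2m$, again valid even when $c=e$. Dividing through by $m$ yields the four inequalities in the statement.

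The main obstacle is essentially bookkeeping rather than content: once one verifies that Theorem \ref{indrp} is available for every cycle type (immediate from $\gcd(m,12)=1$) and one has the uniform bound $\ind(c)\ge 4m/5$, the four inequalities fall out by plugging in. The tightest one is $\ind((123),c)/m>3$, which gives $18/5$, showing that the hypothesis $\gcd(m,6)=1$ leaves ample slack and no further refinement is needed.
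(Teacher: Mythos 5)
Your proof is correct and takes essentially the same approach as the paper: apply Theorem \ref{indrp} (justified because $\gcd(m,12)=1$ makes the ramification indices coprime for every cycle type in $S_4$), use the bound $\ind(c)\ge\ind(A)\ge\frac{4}{5}m$ for nonidentity $c$, and plug in the four cycle types. Your added remark that the $(12)(34)$ and $(1234)$ bounds hold even for $c=e$, while true, is incidental — as with Lemma \ref{delta3}, the intended reading is $c\ne e$ (otherwise the $(12)$ and $(123)$ inequalities would fail), since the lemma is used at primes where both factors ramify.
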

\begin{proof}
	We can apply Theorem \ref{indrp} since $2,3\nmid m$. Then $\ind((12), c) = m + 3\cdot \ind(c) \ge m+ 3\cdot \ind(A) > 2m$, $\ind((12)(34), c) = 2m+2\cdot \ind(c) > m$, $\ind((1234), c) = 3m+ \ind(c)>2m$, $\ind((123),c) = 2m+2\cdot \ind(c) \ge 2m+ 2\cdot \ind(A) \ge 2m+ 2\cdot \frac{4}{5}m > 3m$.
\end{proof}

\begin{lemma}\label{delta5}
	Let $A$ be an odd abelian group and $2,3, 5\nmid |A|=m$. Then $\forall c\in A$ and $k\in S_5$ , $\ind(k, c) /m\ge 1 + \ind(k)-1/7$. 
\end{lemma}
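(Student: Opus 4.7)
The plan is to apply Theorem \ref{indrp} directly and then verify a linear inequality in $\ind(k)$. Since $\gcd(|A|, 30) = 1$ and the order of any element of $S_5$ divides $\mathrm{lcm}(1,2,3,4,5,6) = 60$, the orders $|k|$ and $|c|$ are automatically coprime, hence so are the ramification indices $e_1, e_2$. Thus Theorem \ref{indrp} applies with $G_1 = S_5$, $G_2 = A$, $n_1 = 5$, $n_2 = m$, giving
$$\ind(k,c) \;=\; \ind(k)\cdot m + \ind(c)\cdot 5 - \ind(k)\cdot \ind(c) \;=\; \ind(k)\cdot m + \ind(c)\cdot(5 - \ind(k)).$$

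Next I would bound $\ind(c)/m$ from below. The only relevant case is $c \ne e$ (when $c = e$, the $A$-factor is unramified at $p$ and the situation is governed by the product formula $\disc_p(KL) = \disc_p(K)^n\disc_p(L)^m$ mentioned before Lemma \ref{delta3}). For $c \ne e$, one has $\ind(c) \ge \ind(A)$, and by exactly the inductive computation already used in the proof of Lemma \ref{delta3}, $\ind(A)/m = 1 - 1/p$ where $p$ is the smallest prime dividing $|A|$. Since $2,3,5 \nmid m$, we get $p \ge 7$ and therefore $\ind(c)/m \ge 6/7$.

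Substituting this into the displayed formula yields
$$\frac{\ind(k,c)}{m} \;\ge\; \ind(k) + \frac{6}{7}\bigl(5 - \ind(k)\bigr) \;=\; \frac{\ind(k) + 30}{7}.$$
The desired bound $\ind(k,c)/m \ge 1 + \ind(k) - 1/7 = (6 + 7\,\ind(k))/7$ is then equivalent to $\ind(k) + 30 \ge 6 + 7\,\ind(k)$, i.e.\ $\ind(k) \le 4$, which is automatic since the maximal index of an element of $S_5$ is attained by a $5$-cycle and equals $4$.

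There is no real obstacle here beyond bookkeeping: the only thing to keep in mind is that the inequality is tight precisely in the extremal case $\ind(k) = 4$ (a $5$-cycle) together with $|A| = 7$ and $c$ nontrivial, which explains why the constant $1/7$ appearing in the statement is sharp and cannot be improved under the hypotheses $2,3,5 \nmid |A|$.
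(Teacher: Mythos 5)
Your proof is correct and follows essentially the same route as the paper: apply Theorem \ref{indrp}, rewrite $\ind(k,c)$ in the form $m\ind(k) + (5-\ind(k))\ind(c)$, bound $\ind(c)/m$ below by $(p-1)/p \ge 6/7$, and minimize over $\ind(k) \le 4$. Your explicit handling of the $c = e$ case (which is excluded because it is governed by the product formula, not by the lemma) and the sharpness remark are small clarifications consistent with the paper's implicit assumptions.
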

\begin{proof}
	We can apply Theorem \ref{indrp} since $2,3\nmid m$. Then $\ind(k, c) = m\ind(k)+ 5\ind(c) - \ind(k)\ind(c) =m\ind(k) + (5-\ind(k))\ind(c) = (m-\ind(c))\ind(k) + 5\ind(c)$. So for a certain $k$, the value is smallest when $\ind(c) = \ind(A) $. And at this time $\ind(k,c)/m = \ind(k) + (5-\ind(k))\frac{\ind(A)}{m} = \ind(k) + (5-\ind(k))\frac{p-1}{p}$ where $p$ is the smallest divisor of $m$ and $p\ge 7$. So $\ind(k)/m - \ind(k) =  (5-\ind(k))\frac{p-1}{p}\ge (5 - 4) \frac{6}{7} = \frac{1}{7}$.
\end{proof}

\subsection{Malle's Prediction for $S_n\times A$}
In this section we compute the value of $a(G)$ and $b(k, G)$ for $S_n\times A$. A similar discussion on $a(G)$ for a direct product of two Galois groups in general is in \cite{Mal02}. We include here for the convenience of the reader. Recall that given $G\subset S_n$ a permutation group, for each element $g\in G$, $\ind(g)= n - \sharp\{ \text{orbits of g}\}$. We define $a(G)$ to be the minimum value of $\ind(g)$ among all $g\neq e$. The absolute Galois group $G_k$ acts on the conjugacy classes of $G$ via its action on the character table of $G$. We define $b(k,G)$ to be the number of orbits within all conjugacy classes with minimal index. 

Let $G_i\subset S_{n_i}, i = 1,2$ be two permutation groups. Consider $G = G_1\times G_2\subset S_{n_1n_2}$. Suppose that $g_i\in G_i$ gives minimal index, then for $G\subset S_{n_1n_2}$, the minimal index will either come from $g_1\times e$ or $e\times g_2$ since for any $g\in G_2$, $\ind(g_1, e)\le \ind(g_1, g)$. One can compute $\ind(g_1\times e) = n_2\ind(g_1)$. Therefore $a(G) = \min\{ n_2\cdot a(G_1), n_1\cdot a(G_2)\} = n_1n_2\min\{ \frac{a(G_1)}{n_1}, \frac{a(G_2)}{n_2}  \}$. 

If $ \frac{a(G_1)}{n_1} < \frac{a(G_2)}{n_2}$, then $g\times e$ for all $g$ with $\ind(g) = a(G_1)$ are exactly the elements with minimal index in $G$. Irreducible representations of $G_1\times G_2$ are $\rho_1\otimes \rho_2$ where $\rho_i$ are irreducible representations of $G_i$ with character $\chi_i$. The corresponding character is $\chi_1\cdot\chi_2$. Therefore the $G_k$ action on $g\times e$ has the same orbit as its action on $g$. So $b(k, G) = b(k, G_1)$. 

Our case $S_n\times A$ satisfies the above condition, therefore $a (S_n\times A) = nm\min\{ \frac{1}{n}, \frac{p-1}{p}\} = m$ where $p$ is the smallest prime divisor of $|A| = m$ and $n=3, 4, 5$. And $b(k, S_n\times A) = b(k, S_n) = 1$. 

\section{Product Lemma}
This section answers the question: given two distributions $F_i$, $i = 1, 2$,  each describes the asymptotic distribution of some multi-set of positive integers $S_i$, i.e., $F_i(X) = \sharp\{ s\in S_i \mid s\le X\}$, what is the product distribution $P_{a,b}(X) = \sharp\{ (s_1,s_2)\mid s_i\in S_i, s_1^as_2^b\le X\} $ where $a, b>0$. We will split the discussion into two cases.
\begin{lemma}\label{proeq}
	Let $F_i(X)$, $i=1,2$, be as given above, $F_i(X) \sim A_i X^{n_i}\ln^{r_i} X$ where $0 <n_i\le 1$ and $r_i\in \mathbb{Z}_{\ge 0}$.  If 
	$\frac{n_1}{a}- \frac{n_2}{b} = 0$, then $$P_{a,b}(X)\sim \frac{A_1A_2}{a^{r_1} b^{r_2}}\frac{r_1 !r_2!}{(r_1+r_2+1)!}\frac{n_1}{a}X^{\frac{n_1}{a}}\ln^{r_1+r_2+1} X.$$
\end{lemma}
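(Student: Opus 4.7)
The plan is to reduce $P_{a,b}(X)$ to a one-dimensional partial summation by fixing $s_1$ first. Start from the identity
$$P_{a,b}(X)\;=\;\sum_{\substack{s_1\in S_1\\ s_1^a\le X}} F_2\bigl((X/s_1^a)^{1/b}\bigr),$$
plug in the asymptotic $F_2(Y)\sim A_2 Y^{n_2}\ln^{r_2}Y$, and apply the balance condition $n_1/a=n_2/b$ (so that $s_1^{-an_2/b}=s_1^{-n_1}$ and the external power of $X$ is $X^{n_1/a}$). This pulls a factor $(A_2/b^{r_2})X^{n_1/a}$ outside and leaves the problem of estimating the weighted sum
$$\Sigma(X)\;:=\;\sum_{\substack{s_1\in S_1\\ s_1\le X^{1/a}}} s_1^{-n_1}\bigl(\ln(X/s_1^a)\bigr)^{r_2}.$$

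I would evaluate $\Sigma(X)$ by Abel summation against $F_1$. With $g(t):=t^{-n_1}(\ln(X/t^a))^{r_2}$ and $F_1(t)\sim A_1 t^{n_1}\ln^{r_1}t$, the dominant contribution to $-\int F_1(t)g'(t)\,dt$ comes from the $-n_1 t^{-n_1-1}$ piece of $g'(t)$; after changing variables $u=\ln t$ and writing $L:=(\ln X)/a$, this is the Beta integral
$$A_1 n_1 a^{r_2}\int_0^L u^{r_1}(L-u)^{r_2}\,du\;=\;A_1 n_1 a^{r_2}\,L^{r_1+r_2+1}\,\frac{r_1!\,r_2!}{(r_1+r_2+1)!}.$$
The boundary term either vanishes (at $t=X^{1/a}$ when $r_2\ge 1$) or is of lower order, and the other piece of $g'$ (from differentiating the logarithm factor) contributes $O(L^{r_1+r_2})$, a full logarithm less. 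Substituting $L=(\ln X)/a$ and multiplying by $(A_2/b^{r_2})X^{n_1/a}$ delivers exactly the claimed constant $\dfrac{A_1A_2}{a^{r_1}b^{r_2}}\cdot\dfrac{r_1!\,r_2!}{(r_1+r_2+1)!}\cdot\dfrac{n_1}{a}$, and the exponent $X^{n_1/a}$ matches $X^{n_2/b}$ by the balance condition.

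The main technical obstacle is controlling the errors from replacing $F_1$ and $F_2$ by their asymptotics inside the sum. The delicate range is $s_1^a$ close to $X$, where the $F_2$-asymptotic degrades; but the contribution from, say, $s_1^a\in[X^{1-\delta},X]$ is bounded by $F_1(X^{1/a})=O(X^{n_1/a}\ln^{r_1}X)$, which is smaller than the main term by a factor of $\ln X$. Similarly, feeding $F_1(t)=A_1 t^{n_1}\ln^{r_1}t(1+o(1))$ into the Abel integral carries the $o(1)$ through the same Beta computation and produces an $o(\ln^{r_1+r_2+1}X)$ contribution. A standard $\epsilon$-splitting of $[1,X^{1/a}]$ into an initial piece, a middle bulk, and a near-endpoint tail makes all of these error estimates quantitative and completes the proof.
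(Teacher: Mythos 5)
Your computation matches the paper's --- both sum $F_2$ over $s_1$, apply Abel/partial summation against $F_1$, substitute $u=\ln t$, and reduce to the Beta integral $\int_0^1 u^{r_1}(1-u)^{r_2}\,du$ --- and your bookkeeping of the constants (the factor $a^{r_2}$ from $\ln(X/t^a)$, the factor $L^{r_1+r_2+1}$ with $L=(\ln X)/a$, the prefactor $A_2/b^{r_2}$) is correct, so the engine of the proof is sound. The paper's only cosmetic difference is that it first normalizes to $a=b=1$, $n_i=1$.

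The weak point is the error analysis. You assert that the contribution from $s_1^a\in[X^{1-\delta},X]$ is bounded by $F_1(X^{1/a})=O(X^{n_1/a}\ln^{r_1}X)$, but this silently replaces each summand $F_2\bigl((X/s_1^a)^{1/b}\bigr)$ by $O(1)$. On that range the argument of $F_2$ runs up to $X^{\delta/b}$, so the summand can be as large as $O(X^{\delta n_2/b}\ln^{r_2}X)$, and for any fixed $\delta>0$ the tail you have isolated would then dominate the putative main term $X^{n_1/a}\ln^{r_1+r_2+1}X$, not be lower order. The correct near-endpoint tail is $s_1^a\in[X/C,X]$ with $C$ a fixed constant, on which $F_2$ really is $O_C(1)$. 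The paper sidesteps this entirely by first proving the asymptotic when $F_2(X)=M_2(X)+O(1)$ exactly (its Case~1), and then sandwiching a general $F_i$ between $(1\pm\epsilon)A_iX^{n_i}\ln^{r_i}X+O_\epsilon(1)$ and letting $\epsilon\to 0$ (its Case~2). That multiplicative sandwich is the clean and essentially unavoidable way to push the $o(1)$ in the hypothesis on $F_2$ through the sum; I would replace your ``$\epsilon$-splitting of the range'' paragraph with that argument.
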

\begin{proof}
	We will prove this in three steps.
	
	\textbf{Case 1:  $n_i$ = $1$, $F_1(X) = A_1 X\ln^{r_1} X + o(X\ln^{r_1}X)$, $F_2(X) = A_2 X\ln^{r_2}X+ O(1)$}.\\
	We can assume $a = b =1$. Define $a_n$ to be the number of copies of $n$ in $S_1$, then 
	 $$F_1(X) = \sum_{n\le X} a_n.$$ To simplify, we denote the main term of $F_i(X)$ by $M_i(X)$,  then
	 \begin{equation}
	 \begin{aligned}	
	 P_{1,1}(X) 
	 &=  \sum_{s_1\in S_1} F_2(\frac{X}{s_1}) = \sum_{n\le X} a_n F_2(\frac{X}{n})\\
	 &=  \sum_{n\le X} a_n M_2(\frac{X}{n})+ \sum_{n\le X} a_n O(1).
	 \end{aligned}
	 \end{equation}
	 The last term is easily shown to be small
	 \begin{equation}
	 \begin{aligned}	
	 \sum_{n\le X} a_n O(1) \le O( \sum_{n\le X} a_n) = O(X\ln^{r_1} X).
	 \end{aligned}
	 \end{equation}
	 Assuming $X$ is an integer, we apply summation by parts to compute the first sum 
	  \begin{equation}
	  \begin{aligned}	
	  \sum_{n\le X} a_n M_2(\frac{X}{n}) 
	  = F_1(X) M_2(1) - \int_{1}^{X} F_1(t)\frac{\rd}{\rd t} (M_2(\frac{X}{t}))\rd t.
	  \end{aligned}
	  \end{equation}
	  If $r_2 = 0$, the boundary term is $$A_1A_2X\ln^{r_1} X + o(X\ln^{r_1} X),$$ otherwise it is $0$. The derivative in the integral is
	   \begin{equation}
	   \begin{aligned}	
	   \frac{\rd}{\rd t}(M_2(\frac{X}{t})) & = -A_2 X \frac{1}{t^2} (\ln^{r_2}\frac{X}{t} + r_2 \ln^{r_2-1} \frac{X}{t})\\
	   & = X (\sum_{0\le i\le r_2} P_i(t) \ln^{i}X).
	   \end{aligned}
	   \end{equation}
	   So the integral is
	    \begin{equation}
	    \begin{aligned}	
	   \sum_{0\le i\le r_2} X\ln^{i}X \int_{1}^{X} F_1(t)P_i(t) \rd t.
	    \end{aligned}
	    \end{equation}
	   It is standard in analysis that if $f$ and $g$ are positive and $\lim_{X\to \infty}\int_1^Xf(t)g(t) \rd t= \infty$, then $\int_{1}^{X} o(f(t))g(t) \rd t = o(\int_{1}^{X} f(t)g(t)\rd t )$. Therefore we can plug in $M_1(t)$ for $F_1(t)$ to estimate each integral up to a small error. One can check that for each $i$ the integral of $M_1(t) P_i(t)$ together with $X\ln^i X$ has a main term in the order $X\ln^{r_1+r_2+1} X$. So we can replace $F_1(t)$ by $M_1(t)$ in $(3.3)$ with an error in the order of $o(X\ln X^{r_1+r_2+1})$. Denote the following integral $I$, 
	  \begin{equation}
	  \begin{aligned}	
	  I & = \int_{1}^{X} M_1(t) \frac{\rd}{\rd t}(M_2(\frac{X}{t}))\rd t \\
	  & =  -A_1A_2X\int_{1}^{X}\ln^{r_1} t \cdot (\ln^{r_2}\frac{X}{t} + r_2\ln ^{r_2-1} \frac{X}{t}) \frac{\rd t}{t}.
	  \end{aligned}
	  \end{equation}
	  Using the substitution $u = \frac{\ln t}{\ln X}$, we reduce the integral 
	  \begin{equation}
	  \begin{aligned}	
	  \int_{1}^{X}\ln^{r_1} t \cdot \ln^{r_2}\frac{X}{t}\frac{\rd t}{t} = \ln^{r_1+r_2+1} X\int_{0}^{1} u^{r_1}(1-u)^{r_2} \rd u
	  \end{aligned}
	  \end{equation}
	  to Beta function\cite{Whit} $B(r_1+1,r_2+1)$, therefore
	  \begin{equation}
	  \begin{aligned}	
	  -I =A_1A_2 B(r_1+1,r_2+1)X\ln^{r_1+r_2+1} X + o (X(\ln X)^{r_1+r_2+1}).	  
	  \end{aligned}
	  \end{equation}
	  This is always of greater order than the boundary term, and hence finishes the proof of the first case. 
	  
	  \textbf{Case 2: $n_i = 1$,  $F_i(X) = A_i X \ln^{r_i} X + o(X\ln^{r_i}X)$}.\\
	  For any $\epsilon$, we can bound $F_i(X)$ by $A_iX\ln^{r_i}X(1+\epsilon) + O_{\epsilon}(1). $ Therefore we can bound 
	  $$\limsup_{X\to \infty} \frac{P_{1,1}(X)}{X\ln^{r_1+r_2+1}X} \le (1+\epsilon)^2A_1A_2 B(r_1+1,r_2+1),$$ by Case 1. Similarly we can bound 
	  $$\liminf_{X\to \infty} \frac{P_{1,1}(X)}{X\ln^{r_1+r_2+1}X} \ge (1-\epsilon)^2A_1A_2B(r_1+1,r_2+1).$$
	  So the limit exists and has to be $A_1A_2B(r_1+1,r_2+1)$. In case where some $A_i = 0$, we only need the upper bound to show the limit is $0$. 
	  	  
	  \textbf{ General case:} \\
	  Generally, we consider all possible $a$ and $b$. The condition $s_1^a s_2^b\le X$ is equivalent to $s_1^{n_1} s_2^{n_2} \le X^{n_1/a} = X^{n_2/b}$. The distribution of $s_i^{n_i}$ is
	   \begin{equation}
	   \begin{aligned}	
	   F_i(X^{1/n_i}) = \frac{A_i}{n_i^{r_i}} X\ln^{r_i} X + o(X\ln^{r_i}X),
	   \end{aligned}
	   \end{equation}
	   and we can regard $\frac{A_i}{n_i^{r_i}}$ as the new coefficients. The general distribution is the product distribution in Case 2 when one plugs in $X^{n_1/a}$,
	  \begin{equation}
	  \begin{aligned}	
	  P_{a,b}(X) & = \frac{A_1}{n_1^{r_1}}\frac{A_2}{n_2^{r_2}} B(r_1+1, r_2+1) (\frac{n_1}{a})^{r_1+r_2+1}X^{n_1/a} (\ln X)^{r_1+r_2+1} + o(X^{n_1/a}(\ln X)^{r_1+r_2+1})\\
	  & \sim \frac{A_1}{a^{r_1}} \frac{A_2}{b^{r_2}} B(r_1+1, r_2+1) \frac{n_1}{a} X^{n_1/a} (\ln X)^{r_1+r_2+1}.
	  \end{aligned}
	  \end{equation}
	  \end{proof}
	  
	  \begin{lemma}\label{proneq}
	  	Let $F_i(X)$, $i=1,2$ be as given above, $F_i(X) \sim A_i X^{n_i}\ln^{r_i} X$ where $0 <n_i\le 1$ and $r_i\in \mathbb{Z}_{\ge 0}$.  If 
	  	$\frac{n_1}{a}- \frac{n_2}{b} > 0$, then there exists a constant $C$ such that  $$P_{a,b}(X)\sim C X^{\frac{n_1}{a}}\ln^{r_1} X.$$
	  	Furthermore if $F_i(X) \le A_i X^{n_i}\ln^{r_i} X$, then we have
	  	$$P_{a,b}(X)\le A_1A_2 \frac{r_2!}{b^{r_2}a^{r_1}} \frac{1}{(\frac{n_1}{a}-\frac{n_2}{b})^{r_2+1}}\frac{n_1}{a}X^{\frac{n_1}{a}} \ln^{r_1} X.$$
	  \end{lemma}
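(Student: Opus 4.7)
The plan is to sum with $s_2$ as the outer index, peel off the factor $X^{n_1/a}\ln^{r_1}X$ by using $F_1(Y)\le A_1 Y^{n_1}\ln^{r_1}Y$ on the inner summand, and reduce the rest to the convergent Dirichlet-type series $\sum_{s_2\in S_2}s_2^{-bn_1/a}$, whose convergence is equivalent to the hypothesis $\gamma:=n_1/a-n_2/b>0$. Abel summation applied to $F_2$ followed by a single Gamma-integral evaluation then produces the explicit constant.

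Concretely, I would first write
\[
P_{a,b}(X) = \sum_{s_2\in S_2,\,s_2^b\le X} F_1\bigl((X/s_2^b)^{1/a}\bigr),
\]
apply the hypothesis on $F_1$, and use $\ln X - b\ln s_2\le\ln X$ for $s_2\ge 1$ to obtain
\[
P_{a,b}(X) \le \frac{A_1}{a^{r_1}}\,X^{n_1/a}\ln^{r_1}X \sum_{s_2\in S_2}s_2^{-bn_1/a}.
\]
Next, since $n_2 < bn_1/a$, Abel summation gives
\[
\sum_{s_2\in S_2}s_2^{-bn_1/a} = \frac{bn_1}{a}\int_1^\infty F_2(t)\,t^{-bn_1/a-1}\,\rd t,
\]
where the boundary term vanishes because $F_2(t)t^{-bn_1/a}\le A_2 t^{-b\gamma}\ln^{r_2}t\to 0$. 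Inserting $F_2(t)\le A_2 t^{n_2}\ln^{r_2}t$ and substituting $s=b\gamma\ln t$ evaluates the resulting integral as $r_2!/(b\gamma)^{r_2+1}$. Combining all factors yields
\[
P_{a,b}(X) \le A_1 A_2\,\frac{r_2!}{b^{r_2}a^{r_1}}\,\frac{1}{(n_1/a - n_2/b)^{r_2+1}}\,\frac{n_1}{a}\,X^{n_1/a}\ln^{r_1}X,
\]
which is the claimed upper bound.

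For the existence of $C$ in the first half, I would argue by dominated convergence. Under $F_1(Y)\sim A_1 Y^{n_1}\ln^{r_1}Y$, for each fixed $s_2\in S_2$,
\[
\frac{F_1\bigl((X/s_2^b)^{1/a}\bigr)}{X^{n_1/a}\ln^{r_1}X} \xrightarrow{X\to\infty} \frac{A_1}{a^{r_1}}s_2^{-bn_1/a},
\]
and for $X$ large each term of the sum is dominated by $(A_1(1+\epsilon)/a^{r_1})s_2^{-bn_1/a}$, a summable envelope by the computation above. Passing to the limit term by term,
\[
C = \lim_{X\to\infty}\frac{P_{a,b}(X)}{X^{n_1/a}\ln^{r_1}X} = \frac{A_1}{a^{r_1}}\sum_{s_2\in S_2}s_2^{-bn_1/a} < \infty.
\]

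The main obstacle is identifying $s_2$ as the correct outer summation index. If one instead summed over $s_1$ first and extracted $X^{n_2/b}$, the analogous Dirichlet series $\sum_{s_1\in S_1}s_1^{-an_2/b}$ would diverge under the hypothesis $\gamma>0$, and the method would collapse. The convergence of the right-hand series is precisely the hypothesis of the lemma, so summing in the correct order is both forced and the natural source of the $1/\gamma^{r_2+1}$ factor in the final constant.
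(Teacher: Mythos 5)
Your upper-bound derivation is correct and, for this lemma, actually cleaner than the paper's. The paper also sums over $s_2$ first in its Case 1/Case 2 argument, but when it comes to the explicit constant it switches to $s_1$ as the outer index, applies Abel summation, and is then forced to develop an induction formula for the double-logarithm integral $I_{n,r_1,r_2}=\int_1^X t^n\ln^{r_1}t\,\ln^{r_2}(X/t)\,\rd t/t$ to get the bound. By instead keeping $s_2$ outer, pulling out $\ln^{r_1}X$ via $\ln\bigl((X/s_2^b)^{1/a}\bigr)\le\ln(X)/a$, and reducing to the one-variable Gamma integral $\int_1^\infty t^{n_2-\sigma}\ln^{r_2}t\,\rd t/t=r_2!/(\sigma-n_2)^{r_2+1}$ with $\sigma=bn_1/a$, you land on exactly the same constant with no induction. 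The only loose point is in the first half: you assert that for $X$ large each term is dominated by $(A_1(1+\epsilon)/a^{r_1})s_2^{-bn_1/a}$, but the asymptotic $F_1(Y)\sim A_1Y^{n_1}\ln^{r_1}Y$ only controls $F_1$ for $Y$ large, and for each $X$ there are $s_2$ near $X^{1/b}$ making $Y=(X/s_2^b)^{1/a}$ bounded. You need the $O_\epsilon(1)$ correction $F_1(Y)\le A_1(1+\epsilon)Y^{n_1}\ln^{r_1}Y+M_\epsilon$, whose contribution summed over $s_2^b\le X$ is $O(F_2(X^{1/b}))=O(X^{n_2/b}\ln^{r_2}X)=o(X^{n_1/a})$ and so is harmless; the paper instead sidesteps dominated convergence by observing that $C(X)=\sum_{m^b\le X}b_m m^{-bn_1/a}(1-b\ln m/\ln X)^{r_1}$ is monotonically increasing in $X$, so boundedness alone gives convergence. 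Finally, your remark that summing over $s_1$ first would make the argument collapse is true only for the naive ``extract $X^{n_2/b}$, then sum a Dirichlet series'' strategy; the paper does in fact sum over $s_1$ in its constant bound but keeps the $X$-dependence inside the Abel-summation integral rather than extracting it, which is why its computation is more involved than yours.
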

	  \begin{proof}
	  	We first prove the existence of $C$ in two steps. 
	  	
	  	 \textbf{Case 1:  $F_1(X) = A_1 X^{n_1}\ln^{r_1} X + O(1)$, $F_2(X) = A_2 X^{n_2}\ln^{r_2}X+ o(X^{n_2}\ln^{r_2}X)$}.\\
	  	As in Lemma \ref{proeq}, we need to bound the sum
	  	 \begin{equation}
	  	 \begin{aligned}	
	  	 P_{a,b}(X) 
	  	 &=  \sum_{n^am^b\le X} a_n b_m = \sum_{m^b\le X} b_m F_1(\frac{X^{1/a}}{m^{b/a}})\\
	  	 &= \sum_{m^b\le X} b_m A_1(\frac{X^{1/a}}{m^{b/a}})^{n_1} \ln^{r_1}(\frac{X^{1/a}}{m^{b/a}}) + \sum_{m^b\le X} b_m O(1)\\
	  	 &=\frac{A_1}{a^{r_1}} X^{n_1/a}\ln^{r_1} X \sum_{m^b\le X} \frac{b_m}{m^{bn_1/a}}(1-\frac{\ln m^b}{\ln X})^{r_1} + O(X^{n_2/b}\ln^{r_2} X).\\
	  	 \end{aligned}
	  	 \end{equation}
	  	 It suffices to show the sum $$C(X) = \sum_{m^b\le X} \frac{b_m}{m^{bn_1/a}}(1-\frac{\ln m^b}{\ln X})^{r_1},$$ converges to a constant $C'$, i.e., $C(X) = C' + o(1)$. Notice that $C(X)$ is monotonically increasing, so it suffices to show $C(X)$ is bounded. We will assume $X$ to be integral for simplicity, by summation by parts,
	  	 \begin{equation}
	  	 \begin{aligned}	
	  	 C(X) & \le \sum_{m^b\le X} \frac{b_m}{m^{bn_1/a}} = \frac{F_2(X^{1/b})}{X^{n_1/a}} + \frac{bn_1}{a}\int_1^{X^{1/b}} F_2(t) t^{-bn_1/a-1} \rd t\\
	  	 & \le O(X^{n_2/b-n_1/a}) + \frac{bn_1}{a}\int_1^{X^{1/b}} (M t^{n_2}\ln^{r_2} t + M)t^{-bn_1/a-1} \rd t,
	  	 \end{aligned}
	  	 \end{equation}
	  	 is bounded by a constant. The first term is $o(1)$ since $\frac{n_1}{a}- \frac{n_2}{b} > 0$. For the second term, we can always find $M$ such that $F_2(t) \le Mt^{n_2}\ln^{r_2}t + M$, where the constant term $M$ is a technical modification when $t=1$. One can compute the integral to see that it is bounded by a constant. Therefore, we have proved that $C(X) = C' + o(1)$ and $$P_{a,b}(X) \sim \frac{A_1C'}{a^{r_1}} X^{n_1/a} \ln^{r_1} X.$$
	  	 
	  	 \textbf{Case 2:  $F_i(X) = A_i X^{n_i}\ln^{r_i} X + o(X^{n_i}\ln^{r_i}X)$}.\\
	  	 Notice that $C(X)$ is purely dependent on $F_2(X)$ and independent of $F_1(X)$ once we have decided on these constants $r_i$, $n_i$ and $a$, $b$. Therefore the coefficient of the main term of $P_{a,b}$ is linearly dependent on $A_1$.
	  	 
	  	 To get the upper bound, we can bound $F_1(X)\le A_1(1+\epsilon)X^{n_1}\ln^{r_1} X + O_{\epsilon}(1)$ by definition and compute the upper bound of $P_{a,b}(X)$,
	  	 $$\limsup_{X\to \infty} \frac{P_{a,b}(X)}{X^{n_1/a}\ln^{r_1}X} \le (1+\epsilon)\frac{A_1}{a^{r_1}}C'$$
	  	 by Case 1. Similarly, we can deal with the lower bound. Therefore, 
	  	 $$\lim_{X\to \infty} \frac{P_{a,b}(X)}{X^{n_1/a}\ln^{r_1}X} = \frac{A_1}{a^{r_1}}C'$$
	  	 which proves the general case with $C=\frac{A_1C'}{a^{r_1}}$.\\
	  	 
	  	 \textbf{Bound on $C$:}\\
	  	 Next we assume further that $F_i(X)$ are bounded by $M_i(X) = A_i X^{n_i}\ln^{r_i} X$. We want to show the constant $C$ can be bounded by $O(A_1A_2)$. By summation by parts, 
	  	  \begin{equation}
	  	  \begin{aligned}	
	  	  P_{a,b}(X) 
	  	  & \le \sum_{n\le X^{1/a}} a_n M_2(\frac{X^{1/b}}{n^{a/b}})\\
	  	  & \le F_1(\lfloor X^{1/a}\rfloor) M_2(1) - \int_{1}^{\lfloor X^{1/a}\rfloor} M_1(t) \frac{\rd}{\rd t}(M_2(\frac{X^{1/b}}{t^{a/b}}))\rd t.
	  	  \end{aligned}
	  	  \end{equation}
	  	   	If $r_2 = 0$, the boundary term is bounded by $$\frac{A_1A_2}{a^{r_1}}X^{n_1/a} \ln^{r_1} X,$$ otherwise it is $0$. Consider the following integral
	  	   	\begin{equation}
	  	   	\begin{aligned}	
	  	   	-I & = -\int_{1}^{\lfloor X^{1/a}\rfloor} M_1(t) \frac{\rd}{\rd t}(M_2(\frac{X}{t}))\rd t \\
	  	   	& =  A_1A_2X^{\frac{n_2}{b}} (\frac{a}{b})\int_{1}^{\lfloor X^{1/a}\rfloor}t^{n_1 - \frac{a}{b}n_2 }\ln^{r_1} t \cdot (\frac{n_2}{b^{r_2}}\ln^{r_2}\frac{X}{t^a} + \frac{r_2}{b^{r_2-1}}\ln ^{r_2-1} \frac{X}{t^a}) \frac{\rd t}{t}\\
	  	   	& \le A_1A_2X^{\frac{n_2}{b}} (\frac{1}{a^{r_1}b^{r_2}})\int_{1}^{X}t^{\frac{n_1}{a} - \frac{n_2}{b} }\ln^{r_1} t \cdot (\frac{n_2}{b}\ln^{r_2}\frac{X}{t} + r_2\ln ^{r_2-1} \frac{X}{t}) \frac{\rd t}{t}.
	  	   	\end{aligned}
	  	   	\end{equation}
	  	   	The integral is a sum of multiple pieces in the form of
	  	   	$$I_{n,r_1,r_2} = \int_{1}^{X} t^{n}\ln^{r_1}t \ln^{r_2} \frac{X}{t} \frac{\rd t}{t}.$$
	  	   	It satisfies an induction formula
	  	   	\begin{equation}
	  	   	\begin{aligned}	
	  	   	I_{n,r_1,r_2} = -\frac{r_1}{n} I_{n, r_1-1,r_2} + \frac{r_2}{n} I_{n,r_1, r_2 -1}    \end{aligned}
	  	   	\end{equation}
	  	   	with initial data
	  	   	$$I_{n, r_1, 0} \le \frac{1}{n} X^{n} \ln^{r_1} X$$
	  	   	$$I_{n, 0, r_2} \le \frac{r_2!}{n^{r_2+1}} X^{n}.$$
	  	   	Notice that $I_{n,r_1,r_2}$ is always positive, by the induction formula one can show
	  	   	\begin{equation}
	  	   	\begin{aligned}	
	  	   	I_{n,r_1,r_2} \le \frac{r_2!}{n^{r_2+1}} X^n \ln^{r_1}X.
	  	   	\end{aligned}
	  	   	\end{equation}
	  	   	If $r_2 = 0$, $-I$ together with the boundary term is bounded,
	  	   	\begin{equation}
	  	   	\begin{aligned}	
	  	   	P_{a,b}(X) \le \frac{A_1A_2}{a^{r_1}} \frac{n_1}{a} \frac{1}{\frac{n_1}{a}-\frac{n_2}{b}} X^{\frac{n_1}{a}}\ln^{r_1} X.
	  	   	\end{aligned}
	  	   	\end{equation}
	  	   	When $r_i \ne 0$, we have 
	  	   	\begin{equation}
	  	   	\begin{aligned}	
	  	   	P_{a,b}(X) \le A_1A_2 \frac{r_2!}{b^{r_2}a^{r_1}} \frac{n_1}{a}\frac{1}{(\frac{n_1}{a}-\frac{n_2}{b})^{r_2+1}}X^{\frac{n_1}{a}} \ln^{r_1} X. 
	  	   	\end{aligned}
	  	   	\end{equation}
	  	   	This formula is compatible with the special cases where $r_i$ could be $0$. 
	  \end{proof}

	  \begin{corollary}
	  	Let $k$ be an arbitrary number field, and $G_1\subset S_n$ and $G_2\subset S_m$ be two Galois groups with nontrivial isomorphic quotient. Suppose Malle's conjecture holds for both groups, then there is a lower bound on $N(G_1\times G_2\subset S_{mn},X)$ that
	  $$N(G_1\times G_2\subset S_{mn},X) \ge CX^{a}\ln^{r} X + o(X^a\ln^r X),$$
	  where $a = max\{ a(G_1)/m, a(G_2)/n\}$.  If $ a(G_1)/m= a(G_2)/n$, then $r = b(G_1, k)+ b(G_2,k)-1$; if $ a(G_1)/m > a(G_2)/n$, then $r = b(G_1,k) -1$.
	  \end{corollary}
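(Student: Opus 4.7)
The plan is to realize many $G_1\times G_2$-extensions of $k$ as compositums $KL$ with $K/k$ a $G_1$-extension, $L/k$ a $G_2$-extension, and $\tilde K\cap\tilde L=k$, and then to count such pairs via the product lemmas just proved. For any such pair $(K,L)$ one has $\Gal(\tilde{KL}/k)\simeq G_1\times G_2$ acting naturally on $mn$ letters, and Theorem \ref{dpro} yields $\Disc(KL)\le \Disc(K)^m\Disc(L)^n$; hence every pair with $\tilde K\cap\tilde L=k$ and $\Disc(K)^m\Disc(L)^n\le X$ contributes to $N(G_1\times G_2\subset S_{mn},X)$, and the map $(K,L)\mapsto KL$ is boundedly many-to-one since $K$ and $L$ are recovered as the fixed fields of the two coordinate stabilizers in $G_1\times G_2$.

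The next step is to apply the product lemmas to $F_i(X):=N_k(G_i,X)$. By hypothesis Malle holds for both $G_i$, so $F_i(X)\sim A_iX^{1/a(G_i)}\ln^{b(G_i,k)-1}X$. Taking $a=m$ and $b=n$ in the product-lemma setup, the quantity $P_{m,n}(X):=\#\{(K,L):\Disc(K)^m\Disc(L)^n\le X\}$ is evaluated by Lemma \ref{proeq} when $1/(m\cdot a(G_1))=1/(n\cdot a(G_2))$ and by Lemma \ref{proneq} otherwise. In either case the leading term is precisely of the form $CX^{a}\ln^{r}X$ with the exponent and log-power matching the values of $a$ and $r$ stated in the corollary.

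What remains is to subtract the contribution of pairs with $\tilde K\cap\tilde L\neq k$. If $G_1$ and $G_2$ share no nontrivial common quotient this set is empty and the proof is complete. In the general case, I would partition the bad pairs by the Galois group $H$ of $\tilde K\cap\tilde L$, which must be a nontrivial common quotient, and by the corresponding $H$-subextension $F/k$; for each fixed $F$, the $G_i$-extensions of $k$ whose Galois closure contains $F$ are constrained to have a fixed divisor appearing in the discriminant, producing a strictly smaller rate than Malle for $G_i$, so summing over $F$ keeps the total strictly below the order of $P_{m,n}(X)$. The main obstacle is precisely this last estimate: carrying out the bound on bad pairs rigorously and uniformly over $H$ and $F$, which is exactly the kind of input that the uniformity estimates developed in Section 4 are designed to provide.
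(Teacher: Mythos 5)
Your first two paragraphs reconstruct exactly what the paper's (unwritten) proof of this corollary is: every pair $(K,L)$ of a $G_1$-extension and a $G_2$-extension with $\tilde K\cap\tilde L=k$ corresponds to a $G_1\times G_2$-extension of $k$ of degree $mn$, Theorem~\ref{dpro} gives $\Disc(KL)\le\Disc(K)^m\Disc(L)^n$, and Lemmas~\ref{proeq} and~\ref{proneq} applied to $F_i=N_k(G_i,\cdot)$ with exponents $a=m$, $b=n$ produce the stated leading term $CX^a\ln^r X$. One small sharpening: the correspondence $(K,L)\mapsto KL$ is a bijection, not merely boundedly finite-to-one; given a $G_1\times G_2$-extension of degree $mn$, the fields $K$ and $L$ are the fixed fields of $H_1\times G_2$ and $G_1\times H_2$ (where $H_i$ is the point stabilizer in $G_i$), hence are uniquely determined.

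Your third paragraph, on bounding the pairs with $\tilde K\cap\tilde L\ne k$, is aimed at a situation the corollary is not actually about. The phrase ``with nontrivial isomorphic quotient'' in the statement is a typo for ``with no nontrivial isomorphic quotient.'' This is the hypothesis under which the corollary is an immediate consequence of the product lemmas, it is the setting of Malle's Proposition 4.2 that the paper explicitly says it is improving, and it is exactly what the paper verifies before invoking this machinery in the proof of Theorem~\ref{Thm1} (``$S_n$ and odd abelian group have no isomorphic quotient''). Under that hypothesis every pair $(K,L)$ automatically satisfies $\tilde K\cap\tilde L=k$, there are no bad pairs to subtract, and your first two paragraphs already constitute a complete proof. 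The sketch you give for the general case would not go through as stated: the sum over all $H$-subextensions $F/k$ is infinite, and the heuristic ``a fixed divisor in the discriminant forces a smaller rate'' does not by itself yield a convergent bound uniform in $F$; making such an argument rigorous would need group-specific uniformity results of the kind Section~4 develops only for $S_3,S_4,S_5$ and abelian groups, not a general statement about arbitrary $G_1,G_2$.
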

	   A lower bound $X^a$ is also obtained in \cite{Mal02} Proposition $4.2$. Here we improve on the lower bound by adding a $\ln^r X$ factor. 
       \section{Uniformity Estimate for $S_n$ and $A$ number fields}
       In this section, we are going to include and prove some necessary uniformity results we need for $S_3$ cubic, $S_4$ quartic, $S_5$ quintic and $A$ number fields over arbitrary global field $k$. 
       \subsection{Local uniformity for $S_n$ extensions for $n = 3, 4$}
       We will include the uniformity estimates for $S_3$ and $S_4$ extensions with certain ramification behavior at finitely many places. Both results are deduced by class field theory. 
       
       For totally ramified $S_3$ cubic extensions, we have Proposition 6.2 from \cite{DW88}:
       \begin{theorem}\label{uni3}
       	The number of non-cyclic cubic extensions over $k$ which are totally ramified at a product of finite places $q = \prod{p_i}$ is:
       	$$N_q(S_3,X) = O(\frac{X}{|q|^{2-\epsilon}}),$$
       	for any number field $k$ and any square free integral ideal $q$. The constant is independent of $q$, and only depends on $k$. 
       	\end{theorem}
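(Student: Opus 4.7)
The plan is to reduce the count, via the quadratic resolvent, to a class field theory problem for cyclic cubic extensions of the resolvent, and then apply a Dirichlet series argument with explicit local conditions at the primes dividing $q$.

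I would begin by invoking the resolvent correspondence: every non-cyclic cubic extension $K/k$ has Galois closure $\tilde{K}/k$ containing a unique quadratic subfield $F/k$, with $\tilde{K}/F$ a cyclic cubic extension. By class field theory, such cyclic cubic extensions $\tilde{K}/F$ correspond to order-$3$ characters $\chi$ of the idele class group of $F$ satisfying the anti-invariance condition $\chi^\sigma = \chi^{-1}$, where $\sigma$ generates $\Gal(F/k)$. This sets up a bijection between $S_3$ cubic extensions $K/k$ and suitable equivalence classes of pairs $(F,\chi)$.

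The conductor--discriminant formula, applied cycle-type by cycle-type (as in the tables of Section 2.2), then yields
\[
|\disc(K/k)| \;=\; |\disc(F/k)| \cdot |\Nm_{F/k}(\mathfrak{f}(\chi))|.
\]
Total ramification of $K$ at a prime $p \mid q$ translates to $F/k$ being unramified at $p$ together with $\chi$ being nontrivially ramified at every prime of $F$ above $p$; in both the split and inert cases the local contribution to $|\Nm_{F/k}(\mathfrak{f}(\chi))|$ is at least $|p|^2$. Consequently, for each fixed $F$ unramified at $q$, the set of admissible $\chi$ with $|\Nm(\mathfrak{f}(\chi))| \le Y$ is governed by a Dirichlet series whose Euler factors at primes above $q$ retain only the ramified terms, producing a saving of $|q|^{-2}$ relative to the unconstrained generating series.

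A standard Tauberian argument (or summation by parts) then gives a per-$F$ count of the shape $O(Y/|q|^{2})$ for conductor norm $\le Y$, and combining with the known count $\#\{F : |\disc F| \le X\} = O(X)$ for quadratic extensions of $k$ yields the claimed bound $O(X/|q|^{2-\epsilon})$ after inserting the discriminant relation. The main obstacle will be securing uniformity in $F$: the residue of the relevant $L$-series depends on the $3$-part of the ray class group of $F$, so the implied constant in the Tauberian estimate must be controlled independently of $F$. This is handled by bundling the sums over $F$ and $\chi$ into a single global zeta function (following Datskovsky--Wright), whose combined analytic behaviour delivers a uniform estimate, with the factor $|q|^{\epsilon}$ absorbing the secondary terms coming from the error in the Tauberian passage.
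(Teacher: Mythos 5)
The paper does not actually prove Theorem~4.1; it quotes it verbatim as Proposition~6.2 of Datskovsky--Wright \cite{DW88}, remarking that both the $S_3$ and $S_4$ uniformity results are ``deduced by class field theory.'' Your proposal --- parametrizing non-cyclic cubics by the quadratic resolvent $F/k$ together with an anti-invariant cubic character $\chi$ of $C_F$, and reading off the local saving at $p\mid q$ from the conductor--discriminant identity $\disc(K/k)=\disc(F/k)\cdot\Nm_{F/k}\mathfrak{f}(\chi)$ --- is exactly the class field theory route underlying that proposition, and your local analysis (total ramification of $K$ at $p$ forces $F/k$ unramified at $p$ and $p^2\mid\Nm_{F/k}\mathfrak{f}(\chi)$ in both the split and inert cases) is correct. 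So you have the right framework and the right mechanism for the $|q|^{-2}$ saving.

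The Tauberian bookkeeping in the middle, however, is off in a way that matters if you try to carry it out literally. Anti-invariance ($\chi^\sigma=\chi^{-1}$) forces $\mathfrak f(\chi)$ to be a $\sigma$-invariant ideal of $F$ supported away from the ramification of $F/k$, so $\Nm_{F/k}\mathfrak f(\chi)=\mathfrak a^2$ for a squarefree ideal $\mathfrak a$ of $k$. Consequently the per-$F$ count of admissible $\chi$ with $|\Nm_{F/k}\mathfrak f(\chi)|\le Y$ grows like $Y^{1/2}$, not $Y$, and imposing $q\mid\mathfrak a$ saves only a factor $|q|^{-1}$ in that count; the second factor $|q|^{-1}$ comes from the simultaneous constraint $|\disc(F/k)|\le X/|q|^2$. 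Your stated combination --- ``per-$F$ count $O(Y/|q|^2)$ times $\#\{F:|\disc F|\le X\}=O(X)$'' --- does not yield the stated bound: either exponent, taken at face value, is wrong, and even with the right per-$F$ count a naive sum over $F$ with $|\disc F|\le X$ would pick up a spurious $\log X$. The $|q|^{-2}$ saving and the correct power of $X$ are only visible from the joint Dirichlet series over $(F,\chi)$, whose rightmost pole at $s=1$ carries the factor $|q|^{-2s}$. You do defer the uniformity issue to Datskovsky--Wright's global zeta function, which is the right place to send it, but the intermediate heuristic should be replaced by the two-variable generating function from the start rather than a per-$F$ Tauberian estimate.
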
      	
       	
       	For discussions about overramified $S_4$ quartic extensions, we will follow the definition of \cite{Bha05}: $p$ is overramified if $p$ factors into $P^4$, $P^2$ or $P_1^2P_2^2$ for a finite place $p$ and if $p$ factors into a product of two ramified places for infinite place. Equivalently, this means the inertia group at $p$ contains $\large \langle(12)(34) \large \rangle$ or $\large \langle(1234) \large \rangle$. The uniformity estimate for overramified $S_4$ extensions over $\mathbb{Q}$ is given in \cite{Bha05}, see Proposition 23. And we are going to prove the same uniformity over an arbitrary number field $k$ by the same method. Let $K_{24}$ be an $S_4$ extension over $k$. Denote $K_6$ and $K_3$ to be the subfields corresponding to the subgroup $E = \{(e, (12), (34), (12)(34))\}$ and $H=\large \langle E, (1234)\large \rangle$.     	
       	\begin{theorem}\label{uni4}
       	The number of $S_4$ quartic extensions over $k$ which are overramified at a product of finite places $q = \prod{p_i}$ is:
       		$$N_q(S_4,X) = O(\frac{X}{|q|^{2-\epsilon}}),$$
       	for any number field $k$ and any square free integral ideal $q$. The constant is independent of $q$, and only depends on $k$.        		
       	\end{theorem}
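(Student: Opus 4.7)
The plan is to adapt Bhargava's argument from Proposition 23 of \cite{Bha05} from $\mathbb{Q}$ to an arbitrary number field $k$, parameterizing $S_4$ quartic extensions through their cubic resolvents together with a class-field-theoretic datum, and then leveraging Theorem \ref{uni3} on the cubic side. Given an $S_4$ quartic $K/k$ with Galois closure $K_{24}$, I would use $K_3 = K_{24}^H$, the cubic resolvent, and $K_6 = K_{24}^E$; since $E \subset H$ the inclusion $K_3 \subset K_6$ holds, and $K_6/K_3$ is a quadratic extension. Conversely, any cubic $K_3/k$ of $S_3$ type together with a quadratic extension $K_6/K_3$ whose normal closure over $k$ has group $S_4$ recovers a unique $S_4$ quartic. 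The discriminant of $K$ then factors through $\Disc(K_3)$ and $\Nm_{K_3/k}\bigl(\mathfrak{f}(K_6/K_3)\bigr)$ via a place-by-place recipe dictated by the combined inertia data.

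Second, I would classify the overramified primes by inertia type. Passing to the quotient $S_4/V \cong S_3$, where $V=\{e,(12)(34),(13)(24),(14)(23)\}$ is the normal Klein four subgroup, the element $(12)(34)$ maps to the identity while $(1234)$ maps to a transposition (swapping two of the three Sylvester partitions). Hence overramification at $p$ of type $\langle(12)(34)\rangle$ forces $K_3$ to be unramified above $p$ but $K_6/K_3$ to be ramified there, while overramification of type $\langle(1234)\rangle$ forces $K_3$ to be tamely ramified at $p$ with an additional local condition on $K_6/K_3$. In each case the savings of $|p|^{2-\epsilon}$ comes from either the count of admissible cubic resolvents (via Theorem \ref{uni3}) or the count of admissible quadratic extensions of $K_3$ with constrained conductor (via class field theory); the contributions from distinct $p\mid q$ multiply, so every prime in $q$ delivers its full savings.

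Third, I would assemble the global bound: for fixed $K_3$, class field theory gives an upper bound on the number of quadratic extensions of $K_3$ with prescribed local conductor at each place above $q$ (the count being controlled by the $2$-rank of the relevant ray class group), and one then sums over cubic fields $K_3$ using the Davenport--Heilbronn--Datskovsky--Wright asymptotic together with the relative uniformity of Theorem \ref{uni3}. The main obstacle is making the class-field-theoretic count uniform in $K_3$: one needs the $2$-rank of the ray class group of $K_3$ bounded by $\Disc(K_3)^{\epsilon}$ uniformly. This is standard over $\mathbb{Q}$ (a trivial bound on the $2$-torsion of the class group suffices after absorbing $|q|^\epsilon$), and the same trivial bound extends over an arbitrary $k$ via the Minkowski bound together with the trivial $2$-torsion bound $\mathrm{rk}_2 \le \log_2 |\mathrm{Cl}|$. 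Once this is in hand, the local computations at each $p\mid q$ proceed exactly as in \cite{Bha05}, and the implied constant depends only on $k$, as claimed.
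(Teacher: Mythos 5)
Your overall skeleton — parametrize $S_4$ quartics by the cubic resolvent $K_3$ together with a quadratic extension $K_6/K_3$, and note that $\Disc(K) = \Disc(K_3)\cdot\Nm_{K_3/k}(\disc(K_6/K_3))$ — is the right starting point and matches the paper's (and Bhargava's) setup. But two of your key steps do not go through.

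First, you invoke Theorem~\ref{uni3} (uniformity for \emph{totally ramified} non-cyclic cubics) to control the cubic resolvent at overramified primes. That is the wrong auxiliary input. Passing through the quotient $S_4/V \cong S_3$, the inertia generator $(12)(34)$ maps to the identity (so $K_3$ is unramified at $p$) and $(1234)$ maps to a transposition (so $K_3$ is only partially ramified, of type $P_1^2 P_2$). In neither case is $K_3$ totally ramified at $p$, so Theorem~\ref{uni3} gives no savings on the cubic side. The entire savings in $|q|$ has to come from the conductor of $K_6/K_3$, not from the scarcity of resolvents.

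Second, and more seriously, the class-field-theoretic count over each fixed $K_3$ is \emph{not} controlled by a trivial pointwise bound on the $2$-torsion. You ask for $2^{\operatorname{rk}_2}$ to be $O_\epsilon(\Disc(K_3)^\epsilon)$ uniformly, but the trivial estimate $\operatorname{rk}_2 \le \log_2 |\mathrm{Cl}(K_3)|$ only yields $2^{\operatorname{rk}_2} \le |\mathrm{Cl}(K_3)| \ll \Disc(K_3)^{1/2+\epsilon}$; a pointwise bound of the form $\Disc(K_3)^\epsilon$ on the $2$-class number is a famous open problem, not a triviality. Feeding the trivial bound into your sum over $K_3$ with $\Disc(K_3)\le Y$ yields $\ll Y^{3/2+\epsilon}$, far off from the required $O(Y)$. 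The essential nontrivial input, both in Bhargava's Proposition~23 and in the paper's proof over general $k$, is the \emph{mean} $2$-class number theorem: $\sum_{\Disc(K_3)\le Y} h_2(K_3) = O(Y)$, which the paper imports from \cite{BSW15}. The other key observation the paper uses — that $\Nm_{K_3/k}(\disc(K_6/K_3)) = \Disc(K_6)/\Disc(K_3)^2$ is always a square ideal (verified by the even values $-4$, $-2$ of the virtual character $\operatorname{Ind}^G_E - 2\operatorname{Ind}^G_H$ at $(12)(34)$ and $(1234)$) — is what converts ``overramified at $p$'' into the divisibility $p^2\mid\Nm_{K_3/k}(\disc(K_6/K_3))$, producing the exponent $2$ in $|q|^{2-\epsilon}$. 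Your proposal omits this reduction, and without both ingredients the argument does not close.
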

       	\begin{proof}
       	We can apply the class field theory argument in \cite{Bha05}. On one hand, over arbitrary $k$ we still have that $\Nm_{K_3/k}(\disc(K_6/K_3))$ is a square ideal in $k$ for any $S_4$ extension. Actually 
       	$$\Nm_{K_3/k}(\disc(K_6/K_3)) =\Disc(K_6)/\Disc(K_3)^2,$$ which is the Artin conductor associated to the character $\chi = \text{Ind}^G_E -2\cdot \text{Ind}^G_H$ where $E$ and $H$ are corresponding subgroups of $K_6$ and $K_3$. Here $\text{Ind}^G_E$ is the induced character of the identity character of $E$ as a subgroup of $G = S_4$. By computation, the character $\chi$ has value $-4$ at the conjugacy class of $(12)(34)$, and $-2$ at $(1234)$. The character values are even and so the Artin conductor is always a square. On the other hand, we still have the result on the mean $2$-class number of non-cyclic cubic extensions over any number field $k$ in \cite{BSW15}. It follows that the summation of $2$-class number is $O(X)$ over non-cyclic cubic extensions with bounded discriminant. 
       	\end{proof}
       	\subsection{Local uniformity for $S_n$ extensions for $n=5$}
       	In this section, we are going to prove the uniformity of $S_5$ extensions by geometry of numbers based on previous works \cite{Bha10, Bhasieve, BSW15}. We will use slightly different notation just for this section. Denote $K$ to be an arbitrary number field with degree $d = \deg (K)$. For a certain scheme $Y \in \mathbb{A}^n_\mathbb{Z}$, let $k$ be its codimension.  
       	\begin{theorem}\label{uni5}
       		The number of $S_5$ quintic extensions over $K$ which are totally ramified at a product of finite places $q = \prod{p_i}$ is:
       		$$N_q(S_5,X) = O(\frac{X}{|q|^{4/15-\epsilon}}),$$
       		for any number field $K$ and any square free integral ideal $q$. The constant is independent of $q$, and only depends on $k$. 
       	\end{theorem}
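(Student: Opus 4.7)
The plan is to adapt Bhargava's geometry-of-numbers method for counting quintic fields \cite{Bha10} together with his geometric sieve \cite{Bhasieve}, using the extension to general number fields from \cite{BSW15}, to obtain the prescribed-ramification count.

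First, I set up the parametrization. By \cite{Bha10}, extended to arbitrary $K$ in \cite{BSW15}, isomorphism classes of quintic $O_K$-rings correspond bijectively to $G(O_K)$-orbits on $V(O_K)$, where $V$ is the $40$-dimensional representation of $G = \mathrm{GL}_4 \times \mathrm{SL}_5$ given by quadruples of $5 \times 5$ skew-symmetric matrices, and the discriminant of the quintic ring agrees with a specific $G$-invariant polynomial on $V$. Irreducible, nondegenerate orbits corresponding to maximal orders parametrize the $S_5$-quintic extensions of $K$.

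Next, I encode the local condition. Total ramification of the quintic extension at a prime $p$ of $O_K$ means that the inertia subgroup is generated by a $5$-cycle; translated to the parametrization, this forces the reduction $v \bmod p \in V(\mathbb{F}_{|p|})$ to lie in a proper closed subscheme $W_p$. I compute the codimension of $W_p$ (and bound the number of $\mathbb{F}_{|p|}$-points uniformly in $p$) by a direct calculation on the Bhargava parametrization of quintic algebras over the residue field. Imposing the condition for each $p\mid q$ cuts out the configurations we wish to count.

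Finally, I run the sieve. Averaging the characteristic function of the union of $G(O_K)$-translates of a fundamental domain (constructed as in \cite{BSW15}) over the bounded-discriminant region, restricted to the congruence condition $v \bmod p \in W_p$ at each $p\mid q$, produces a main term of size $\mathrm{Vol}(R_X)\cdot\prod_{p\mid q}\tfrac{|W_p|}{|V(\mathbb{F}_{|p|})|}$ that is $O(X/|q|^{4/15-\epsilon})$ once the codimension estimate is applied. The main obstacle is uniformly controlling the cuspidal region of the fundamental domain and the contribution of reducible or degenerate orbits as $|q|$ grows; this is exactly what the geometric sieve of \cite{Bhasieve} was built to handle, and the adaptation to the number-field setting follows the template in \cite{BSW15} once the $G(O_K)$-equivariant structure on $V$ is in place.
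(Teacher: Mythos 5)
Your high-level strategy — parametrize $S_5$-quintic extensions via Bhargava's prehomogeneous space $V$ of quadruples of skew-symmetric $5\times 5$ matrices, encode total ramification at each $p\mid q$ as a congruence condition landing in a closed subscheme $Y$ of positive codimension, and run a geometric sieve / averaging argument over the fundamental domain as in \cite{Bhasieve, BSW15} — is exactly the approach the paper takes. But there is a genuine gap in where you claim the exponent $4/15$ comes from.

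You write that the main term $\mathrm{Vol}(R_X)\cdot\prod_{p\mid q}|W_p|/|V(\mathbb{F}_{|p|})|$ is already $O(X/|q|^{4/15-\epsilon})$ "once the codimension estimate is applied." That is not right. The subscheme $Y$ (cut out by the partial derivatives of the discriminant polynomial of order $<4$) has codimension $k=4$ in the $40$-dimensional space $V$, so the genuine main term gains a factor $|q|^{-4}$ — far better than $|q|^{-4/15}$. The weak exponent $4/15$ is produced entirely by the cuspidal regions of the fundamental domain: when you integrate the lattice-point count over the torus variables $s_1,\dots,s_7$ subject to $s_1^3s_2s_3s_4^3s_5^6s_6^4s_7^2\le\lambda/\kappa$, certain products like $t_{112}t_{113}t_{114}t_{123}$ do \emph{not} integrate to $O(1)$ but to a positive power of $\lambda$, producing subdominant-in-$X$ but $q$-free contributions of size $X^{(36+4/3)/40}$. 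The stated bound $O(X/|q|^{4/15-\epsilon})$ is what survives after balancing $X/q^4$ against these $X^{112/120}$-type cusp terms. So the step you dismiss as a technical obstacle "exactly what the geometric sieve was built to handle" is in fact the entire source of the quantitative exponent, and it requires a term-by-term analysis of which monomials in the $t_{lij}$ are dangerous, in each of the several regions $a^1_{12}\ne 0$, $\{a^1_{13},a^2_{12}\ne 0\}$, etc., exactly as in \cite{Bha10}.

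A second, smaller gap: over a general number field $K$ the sieve requires a lattice-point estimate for boxes $mrB$ in $F^n$ with $F=\mathbb{R}^r\oplus\mathbb{C}^s$, intersected with translates of ideal lattices $q\subset O_K$. This is not a routine transport from $\mathbb{Z}$: one needs to know that the ideal lattices are not too skew, uniformly over all ideals of $O_K$. The paper establishes this as a separate lemma (bounding the successive minima of any integral ideal $I$ by $O(|I|^{1/d})$, uniformly), and then proves the analogue of the box-counting theorem over $O_K$ from it. You should flag that this regularity input is needed and is not automatic from \cite{BSW15}.
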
  
       	    	
       The proof is an application of Bhargava's geometric sieve method \cite{Bhasieve}. By \cite{Bhasieve}, the points in the prehomogenous space with certain ramification at a finite place $p$ are $O_K/pO_K$-points on a certain scheme $Y$, which is cut out by partial derivatives of the discriminant polynomial. And to get a power saving error, we can apply the averaging technique like in \cite{BBP, BST, S5power} as suggested in Remark $4.2$ in \cite{Bhasieve}. Instead of considering points that have extra ramification at primes greater than $M$, we only need to look at the number of points that have extra ramification at specified primes $q = \prod p$. So we will first determine the number of $O_K/qO_K$-points of a scheme $Y$ in an expanding ball and then compute the number of lattice points in the fundamental domain by averaging technique. We first look at the case when $K$ is $\mathbb{Q}$.  Corresponding to Theorem $3.3$ in \cite{Bhasieve}, we have the following theorem. 
       
       	\begin{theorem}\label{rBoverq}
       		Let $B$ be a compact region in $\mathbb{R}^n$ having finite measure. Let $Y$ be any closed subscheme of $\mathbb{A}^n_{\mathbb{Z}}$ of codimension $k$. Let $r$ be a positive real number and $q$ be a square free integer. Then we have 
       		$$\sharp \{  a\in rB\cap \mathbb{Z}^n \mid a(\text{mod }q) \in Y(\mathbb{Z}/q\mathbb{Z})\} =  O( r^{n-k} ) \cdot C^{\omega(q)} \cdot \max \{  1, (\frac{r}{q})^k \},$$
       		where the implied constant depends only on $B$ and $Y$, and $C$ is an absolute constant only depending on $Y$. 
       	\end{theorem}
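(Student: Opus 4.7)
The strategy is to adapt Theorem~$3.3$ of \cite{Bhasieve} from a single prime to squarefree moduli via the Chinese Remainder Theorem. First I would reduce to the case $Y$ irreducible of pure codimension $k$ (at the cost of an absolute factor for the number of components of $Y$), and then establish the uniform mod-$p$ bound $|Y(\mathbb{F}_p)| \le C \cdot p^{n-k}$ for $C = C(Y)$, using a standard dimension / complete-intersection argument (writing $Y \subset V(f_1,\dots,f_k)$) together with the trivial bound $|Y(\mathbb{F}_p)| \le p^n$ to handle the finitely many small primes. Since $q$ is squarefree, the Chinese Remainder Theorem gives
\[
|Y(\mathbb{Z}/q\mathbb{Z})| \;=\; \prod_{p\mid q}|Y(\mathbb{F}_p)| \;\le\; C^{\omega(q)}\,q^{n-k}.
\]

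\emph{Large-$r$ regime} $(r \ge q)$. Partition $\mathbb{Z}^n$ into the $q^n$ cosets of $q\mathbb{Z}^n$; only the $\le C^{\omega(q)} q^{n-k}$ cosets whose residue lies in $Y(\mathbb{Z}/q\mathbb{Z})$ contribute. Since $B$ is compact, each such coset meets $rB$ in $\mathrm{Vol}(rB)/q^n + O((r/q)^{n-1}) = O((r/q)^n)$ lattice points. Summing yields
\[
N \;\le\; C^{\omega(q)} q^{n-k} \cdot O((r/q)^n) \;=\; O(r^{n-k}) \cdot C^{\omega(q)} \cdot (r/q)^k,
\]
matching the claim.

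\emph{Small-$r$ regime} $(r < q)$. The naive coset argument only gives $O(q^{n-k}) C^{\omega(q)}$, which is too wasteful, so I would induct on $n$ via slicing. Let $\pi \colon \mathbb{A}^n \to \mathbb{A}^{n-1}$ forget the last coordinate and set $Y' = \overline{\pi(Y)}$. Since fibers of $\pi|_Y$ have dimension at most $1$ and $\dim Y = n - k$, $Y'$ has codimension either $k$ in $\mathbb{A}^{n-1}$ (Case~A: $Y = Y' \times \mathbb{A}^1$) or $k-1$ (Case~B: $Y \to Y'$ generically finite of some degree $d = d(Y)$). In Case~A the condition on $a_n$ is vacuous, so the inductive hypothesis applied to $Y'$ gives $O(r^{(n-1)-k}) C^{\omega(q)}$ admissible $a'$, each contributing $O(r)$ choices of $a_n$, for a total of $O(r^{n-k}) C^{\omega(q)}$. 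In Case~B, the inductive hypothesis bounds admissible $a'$ by $O(r^{(n-1)-(k-1)}) C^{\omega(q)} = O(r^{n-k}) C^{\omega(q)}$; for each such $a'$, CRT and the fiber degree bound give $\le d^{\omega(q)}$ admissible residues for $a_n$ modulo $q$, and since $r < q$ each residue contains at most one integer in the window. This yields $O(r^{n-k}) (Cd)^{\omega(q)}$, and absorbing $d$ into $C$ finishes the case.

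\emph{Main obstacle.} The crux is closing the induction in Case~B: the fiber-degree bound $|Y_{a'}| \le d$ only holds on a generic open subset of $Y'$, and the exceptional locus where fibers jump in dimension must be treated separately. This is handled by observing that the exceptional locus is a proper closed subscheme of $Y'$, hence of strictly smaller dimension, so its contribution is dominated by applying the inductive hypothesis one codimension deeper. The bookkeeping of the single constant $C$ through the induction (avoiding compounding factors such as $C^{2\omega(q)}$ at each level) amounts to absorbing finitely many bounded constants into a single final $C$, and is otherwise routine.
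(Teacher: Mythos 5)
Your overall strategy — induct on $n$ by projecting away one coordinate, bounding the count of base points and of fiber points separately, and gluing mod-$p$ information via CRT — is the same one the paper uses, and your large-$r$ coset argument and Case A/Case B small-$r$ estimates are arithmetically correct. The genuine difference, and the place where a real gap opens up, is how the fiber bound is obtained.

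The paper does not project onto a \emph{fixed} coordinate. It chooses the projection direction $s$ (depending on $Y$) so that some polynomial $f$ vanishing on $Y$ has the property that the leading coefficient of $t\mapsto f(v+st)$ is a fixed nonzero integer, independent of $v$. Consequently, for every base point $v$ and every $p$ outside a fixed finite bad set, the fiber has at most $\deg f$ points in $\mathbb{F}_p$. There is no generic locus, no exceptional locus, and no Case A: the fiber bound is uniform in $v$. Your fixed projection forces you into Case A (handled, fine) and into Case B's exceptional locus, and there the sketch is not adequate. The difficulty you flag — ``the fiber-degree bound only holds on a generic open subset of $Y'$'' — is not resolved by ``applying the inductive hypothesis one codimension deeper,'' because the bad event is local at each prime: for a squarefree $q$ one must handle base points $a'$ for which $a'\bmod p$ lies in the exceptional locus $Z'$ for \emph{some} $p\mid q$ but not others, and then the fiber size mod $q$ involves a mix of $d$'s and $p$'s. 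The count of such $a'$ is a mixed-modulus count ($Z'$ mod $q_1$, $Y'$ mod $q_2$) that the induction as stated does not cover; carrying it out requires an explicit decomposition over the set $T$ of bad primes and a separate bookkeeping, and the naive coset bound for that piece gives $q^{n-k}$ rather than $r^{n-k}$ when $r<q$. So this is a substantive missing step, not routine bookkeeping. Separately, your reliance on the uniform bound $|Y(\mathbb{F}_p)|\le Cp^{n-k}$ is used as a black box; the paper avoids needing it by deriving everything from the one-variable degree bound within the induction itself. The takeaway: adopting the paper's adaptive projection direction (so the fiber bound is genuinely uniform over $v$ and over all good $p$) is what makes the induction close cleanly, and it also eliminates the need for the separate $r\ge q$ regime.
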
 
       	\begin{proof}
       			 The case when $k =0$ is trivial since the number of lattice points in the box is $O(r^n)$. So the initial case is $k = 1$ with $n=1$ . Then there is only one polynomial $f(x)$ for $n=1$. The number of points is $O(C^{\omega(q)}\cdot \max \{ 1, \frac{r}{q}\})$ where we could choose $C$ to be the degree of $f$ and the implied constant depends on $f$ and $B$. 
       			 
       			 We will apply induction on $n$ and $k$. Let $\pi : \mathbb{A}^n_{\mathbb{Z}} \to \mathbb{A}^{n-1}_{\mathbb{Z}}$ be the projection onto the first $n-1$ coordinates. By dimension formula, the image $\bar{Y}$ of $Y$ in $\mathbb{A}^{n-1}_{\mathbb{Z}}$ is a closed subscheme with codimension at least $k-1$. And we can choose $\pi$ carefully so that for each $y = (a_1, \cdots, a_{n-1})\in \mathbb{Z}^{n-1}$ that $y (\text{mod } q) \in \bar{Y}(\mathbb{Z}/q\mathbb{Z}) $, the number of lattice points lying in the fiber is
       			 $$\sharp \{ a =  (a_1, \dots, a_{n-1}, b) \in rB\cap \mathbb{Z}^n \mid  a (\text{mod } q) \in Y(\mathbb{Z}/q\mathbb{Z}) \},$$  			 
       			 and is bounded by $C^{\omega(q)}\cdot \max \{ 1, \frac{r}{q}\}$. Indeed suppose $f\in \mathbb{Z}[x_1, \cdots, x_n]$ vanishes on $Y$, and $s$ is the direction of projection, then $f(v+st)$ as a polynomial in $t$ has leading coefficients as a polynomial in $s$. So if we choose $s$ such that the leading coefficients is non-zero, then aside from finitely many $p$, the number of solutions in  $\mathbb{Z}/p\mathbb{Z}$ at a fixed $v$ is bounded by the degree of $f$. Therefore, the number of solutions in $\mathbb{Z}/q\mathbb{Z}$ is at most $O(C^{\omega(q)})$ where $C$ is the degree of $f$ and the implied constant depends on the bad primes. And the number of lattice points follows by the induction to $n=1$ case. 
%
       		
       			 By induction, the number of $y\in \mathbb{Z}^{n-1}$ in the projection of $rB$ and in $\bar{Y}(\mathbb{Z}/q\mathbb{Z})$ is $O(r^{n-k})\cdot C^{\omega(q)}\cdot \max \{1, (\frac{r}{q})^{k-1} \}$, and the number of $x_n$ for each $y$ is $C^{\omega(q)}\cdot \max \{1,\frac{r}{q}     \}$. So the totaly estimate is      			 
       			 \begin{equation}
       			 \begin{aligned}	
       			 &\sharp \{ a\in rB\cap \mathbb{Z}^n \mid  a (\text{mod } q) \in Y(\mathbb{Z}/q\mathbb{Z}) \}  \\
       			 = & O(r^{n-k} ) \cdot C^{\omega(q)} \cdot \max \{  1, (\frac{r}{q})^{k-1} ,  \frac{r}{q}, (\frac{r}{q})^k\} = O(r^{n-k} ) \cdot C^{\omega(q)} \cdot \max \{ 1, (\frac{r}{q})^k\}.
       			 \end{aligned}
       			 \end{equation}	
       	\end{proof}
       	Notice that although Theorem $3.3$ in \cite{Bhasieve} deals with all $p> M$, it can also give an upper bound for counting at a single prime. On one hand, our statement includes the cases where finitely many ramification conditions are specified. On the other hand, as suggested by Bhargava, we can get a slightly better error of order $r^{n-k}$ instead of $r^{n-k+1}$. 
       	
       	
       	In order to apply the averaging technique, we also need to consider the number of lattice points in the box $mrB$ that is not necessarily expanding homogeneously in each direction. Here $m$ is a lower triangle unipotent transformation in $GL_n(\mathbb{Q})$ which does not change the estimate much. And $r = (r_1, \dots, r_n)$ is the scaling factors and the estimate will depend on $r_i$. 
       	\begin{theorem}\label{mrBoverq}
       		Let $B$ be a compact region in $\mathbb{R}^n$ having finite measure. Let $Y$ be any closed subscheme of $\mathbb{A}^n_{\mathbb{Z}}$ of codimension $k$. Let $r = (r_1, \dots, r_n)$ be a diagonal matrix of positive real number where $r_i \ge \kappa$ for a certain $\kappa$, $q$ be a square free integer, and $m$ be a lower triangle unipotent transformation in $GL_n(\mathbb{R})$. Then we have 
       		$$\sharp \{  a\in mrB\cap \mathbb{Z}^n \mid a(\text{mod }q) \in Y(\mathbb{Z}/q\mathbb{Z})\} = O(\frac{\prod_{i=1}^n r_i}{q^k}) \cdot C^{\omega(q)} \cdot \max\{1, \frac{q}{r_i}, \frac{q^2}{r_i r_j} , \dots, \frac{q^k}{ \prod_{i= i_1}^{i_k} r_i}\},$$
       		where the implied constant depends only on $B$, $Y$ and $\kappa$, and $C$ is an absolute constant only depending on $Y$. 
       	\end{theorem}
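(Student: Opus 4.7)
The plan is to imitate the induction on $(n,k)$ used in Theorem \ref{rBoverq}, while tracking the anisotropic scaling factors $r_i$ and exploiting the lower-triangular unipotent structure of $m$ so that projection interacts cleanly with the box. The base case $k=0$ is immediate: $\sharp(mrB \cap \mathbb{Z}^n) = O(\prod r_i)$, because $m$ is volume-preserving and $r_i \ge \kappa$. For the base case $(n,k) = (1,1)$, $m$ is trivial and we count roots of a univariate polynomial $f$ in an interval of length $O(r_1)$, modulo $q$. The Chinese remainder theorem together with the bound $\deg f$ on the number of roots modulo each prime $p \mid q$ gives $O(C^{\omega(q)}\max\{1, r_1/q\})$.

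For the inductive step, let $\pi$ be projection onto the first $n-1$ coordinates. Because $m$ is lower-triangular unipotent, its top-left $(n{-}1)\times(n{-}1)$ block $m'$ is again lower-triangular unipotent, and $\pi(mrB) \subseteq m' r' \pi(B)$ with $r' = \mathrm{diag}(r_1, \ldots, r_{n-1})$. After choosing a generic order of coordinates (exactly as in the proof of Theorem \ref{rBoverq}), we may assume that the Zariski closure $\bar Y := \overline{\pi(Y)}$ has codimension exactly $k-1$ in $\mathbb{A}^{n-1}_{\mathbb{Z}}$, and that a defining polynomial $f$ of $Y$ has leading coefficient in $x_n$ that is a nonzero constant away from finitely many bad primes absorbed into $C$. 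The fiber of $mrB$ above any $y \in \pi(mrB)$ is an interval in $x_n$ of length $O(r_n)$: writing $m = I + N$ with $N$ strictly lower-triangular, the $n$-th coordinate along the fiber equals $r_n v_n$ plus a constant depending only on $y$.

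By induction applied to $\bar Y$ inside $m' r' \pi(B)$,
\begin{equation*}
\sharp\{y \in m'r'\pi(B) \cap \mathbb{Z}^{n-1} : y \bmod q \in \bar Y\} = O\!\left(\tfrac{\prod_{i<n} r_i}{q^{k-1}}\right) C^{\omega(q)} \max_{\substack{S \subseteq \{1,\dots,n-1\} \\ |S| \le k-1}} \tfrac{q^{|S|}}{\prod_{i \in S} r_i},
\end{equation*}
and for each such $y$ the number of lattice points $a_n$ with $(y, a_n) \in mrB$ and $(y, a_n) \bmod q \in Y(\mathbb{Z}/q)$ is $O(C^{\omega(q)} \max\{1, r_n/q\})$ by the $(1,1)$ base case applied to $f(y, \cdot)$. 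Multiplying these bounds and absorbing the factor $\max\{1, r_n/q\}$ into the outer maximum by optionally adjoining $n$ to $S$ yields the claimed estimate.

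The main obstacle is the genericity of the projection direction at each step: the codimension of $\bar Y$ must drop by exactly one, and the chosen defining polynomial must have its leading coefficient in the new variable become a unit modulo all but finitely many primes (absorbed into $C$). This is the same ingredient appearing in Theorem \ref{rBoverq} and in Bhargava's sieve \cite{Bhasieve}, so only a careful bookkeeping of the anisotropic scaling $r$ is genuinely new here. The hypothesis $r_i \ge \kappa$ ensures that no individual factor $\max\{1, r_i/q\}$ degenerates in a way that would spoil the product estimate, which is precisely what legitimizes the combinatorial maximum over subsets $S$.
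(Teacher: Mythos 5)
Your proof is correct and follows essentially the same approach as the paper: the same induction on $(n,k)$, the same projection onto the first $n-1$ coordinates, and the same key observation that the lower-triangular unipotent structure of $m$ makes each fiber a translate (by an amount depending only on $y$) of an interval of length $O(r_n)$, so the one-variable count $O(C^{\omega(q)}\max\{1, r_n/q\})$ applies uniformly in $y$. The bookkeeping that absorbs the factor $\max\{1, r_n/q\}$ into the subset maximum, and the genericity of the projection direction, are also handled as in the paper.
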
 
       	\begin{proof}
       		For case $k =0$, we can get the result $O(\prod_{i=1}^n r_i)$ directly because the total count of lattice points in $mrB$ only differs with those in $rB$ by lower dimension projections of $rB$ which could be bounded by $O(\prod_{i=1}^n r_i)$ where the implied constant depends on $\kappa$. 
       		
       		
       		The initial case when $k=1$, $ n= 1$ is estimated to be $O(\frac{r_1}{q}) \cdot C^{\omega(q)} \cdot \max\{1, \frac{q}{r_1}\}$. It is the same with Theorem \ref{rBoverq} since there is no non-trivial unipotent action. For general $n$ and $k$, we will still consider the projection to the first $n-1$ coordinates. By induction, the number of points in $\bar{Y}$  is at most $O(\frac{\prod_{i=1}^{n-1} r_i}{q^{k-u}}) \cdot C^{\omega(q)} \cdot \max\{1, \frac{q}{r_i}, \frac{q^2}{r_i r_j} , \dots, \frac{q^{k-u}}{ \prod_{i= i_1}^{i_{k-u}} r_i}\}$. And for a fixed $y=(a_1, \dots, a_{n-1})\in \mathbb{Z}^{n-1}$, the number of lattice points lying in the fiber is
       		\begin{equation}
       		\begin{aligned}	
       		&\sharp \{ a =  (a_1, \dots, a_{n-1}, b) \in mrB\cap \mathbb{Z}^n \mid  a (\text{mod } q) \in Y(\mathbb{Z}/q\mathbb{Z}) \}  \\
       		=& \sharp \{ b \in P_{y}(mrB)\cap \mathbb{Z} \mid  a (\text{mod } q) \in Y(\mathbb{Z}/q\mathbb{Z}) \}.\\
       		\end{aligned}
       		\end{equation}	
       		Here $P_{y}(R)$ means the section of $R$ with $y = (a_1, \dots, a_{n-1})$ fixed where $R$ is any compact region. A lower triangle unipotent transformation $m$ has the property that once $x_i$ is fixed for $i<k$, then the action on $x_k$ is just a translation. Therefore there exists $y'$ such that $P_{y}(mR)$ and $P_{y'}(R)$ only differ by a constant translation, i.e.,  $P_{y}(mR) = P_{y'}(R) + b_0$ where $b_0$ is a constant vector.  Since the estimate only depends on the compact region in terms of its low dimension projection, constant translation will not affect the estimate, so we can look at instead
       		\begin{equation}
       		\begin{aligned}	
       		& \sharp \{ b  \in P_{y'}(rB)\cap \mathbb{Z}^k \mid a (\text{mod } q) \in Y(\mathbb{Z}/q\mathbb{Z}) \}  \\
       		= & O(\frac{r_n}{q}) \cdot C^{\omega(q)} \cdot \max\{1, \frac{q}{r_n} \}.\\
       		\end{aligned}
       		\end{equation} 
       		The implied constant in the last equality could be bound uniform for all $y$ by similar argument in Theorem \ref{rBoverq}. Therefore by taking the product, we get
       		\begin{equation}
       		\begin{aligned}	
       		&\sharp \{ a\in mrB\cap \mathbb{Z}^n \mid  a (\text{mod } q) \in Y(\mathbb{Z}/q\mathbb{Z}) \}  \\
       		= & O(\frac{\prod_{i=1}^n r_i}{q^k}) \cdot C^{\omega(q)} \cdot \max\{1, \frac{q}{r_i}, \frac{q^2}{r_i r_j} , \dots, \frac{q^k}{ \prod_{i= i_1}^{i_k} r_i}\},\\
       		\end{aligned}
       		\end{equation}	
       		and the implied constant depends only on $B$, $Y$ and $\kappa$.      		
       	\end{proof}
       	\begin{remark}
         We can consider the above theorem as an improvement on Theorem $26$ \cite{BST} in this special case. Indeed, the cubic rings $K$ that are ramified at $p$ with $p^k | \Disc (K)$ are a union of $O(p^{4-k})$ translation of lattices. So we basically prove that when we count these lattice points in the expanding ball $mrB$, we do not get those error terms at the tail in line $(29)$ in \cite{BST}.
         \end{remark}
         
          \begin{proof}[\textbf{Proof of Theorem \ref{uni5} over $\mathbb{Q}$}]
          	
          We first prove this statement over $\mathbb{Q}$ and then will show that the computation over other number field $K$ should give the same answer. Recall that the quintic order is parametrized by $G(\mathbb{Z})$-orbits in $V(\mathbb{Z})$ where $G = GL_4\times GL_5$ and $V$ is the space of quadruples of skew symmetric $5\times 5$ matrices. Denote the fundamental domain of $G(\mathbb{R})/ G(\mathbb{Z})$ by $\mathcal{F}$ and $B$ is a compact region in $V(\mathbb{R})$. Let $S$ be any $G(\mathbb{Z})$-invariant subset of $V^{(i)}_{\mathbb{Z}}$ which specifies a certain property of quintic orders, $S^{irr}$ be the subset of irreducible points in $S$, and $N(S; X)$ denotes the number of irreducible-$G(\mathbb{Z})$ orbits in $S$ with discriminant less than $X$. Then by formula $(20)$ in\cite{BST}, the averaging integral for a certain signature $i$ is
          		\begin{equation}\label{bstave}
          		\begin{aligned}
          		N(S; X) = \frac{1}{M_i}\int_{g\in \mathcal{F}} \sharp \{x \in S^{irr} \cap gB\cap V_{\mathbb{R}}^{(i)}: |\text{Disc}(x)|<X \}  dg
          		\end{aligned}
          		\end{equation}          		
          		where $M_i$ is a constant depending on $B$. 
          		
          		Here for our purpose, $S = S_q$ should be the set of maximal orders that are totally ramified at all primes $p| q$. In order to apply Theorem \ref{mrBoverq}, we can replace the condition $x\in S^{irr}$ by $x\in Y(\mathbb{Z}/q\mathbb{Z})$ where $Y$ is a codimension $k = 4$ variety in a $40$ dimensional space defined by $f^{(j)} =0$ for all partial derivatives of the discriminant polynomial with order $j<4$. See \cite{Bhasieve} for the definition of $Y$. 
          		
          		For $g\in G(\mathbb{R})$, we have $g= mak\lambda$ as the Iwasawa decomposition \cite{Bha10}. Here $m$ is an lower triangle unipotent tranformation, $a = (t_1, \dots, t_n)$ is a diagonal element with determinant 1 and $k$ is an orthogonal transformation in $G(\mathbb{R})$ and $\lambda = \lambda I$ is the scaling factor. We will choose $B$ such that $KB = B$, so $gB = ma\lambda B = m r B$,  in which $r = \lambda (t_1, \dots, t_n)$ satisfies that $\prod^n_1 t_i =1$. Lastly, the requirement $|\text{Disc}(x)|<X $ could be dropped as long as we take $\lambda \le O(X^{1/d})$ where this implied constant depends only on $B$. So we have        		
          		 $$\sharp \{x \in S^{irr} \cap gB\cap V_{\mathbb{R}}^{(i)}: |\text{Disc}(x)|<X \}\le \sharp \{  x \in mrB\cap \mathbb{Z}^n \mid a(\text{mod }q) \in Y(\mathbb{Z}/q\mathbb{Z})\}. $$
          		
          		We are going to apply Theorem \ref{mrBoverq} to estimate the integral in (\ref*{bstave}). By \cite{Bha10}, all $S_5$ orders are parametrized by quadruples of skew symmetric $5\times 5$ matrices. So there are $40$ variables and therefore the dimension for the whole space is $n=40$. Let's call those variables $a^l_{ij}$ where $1\le l \le 4$ means the $m$-th matrix, $1\le i \le 4$ is the row index of a skew-symmetric $5\times 5$ matrix, $2\le j\le 5$ is the column index. We can define the partial order among all $40$ entries: $a^i_{jk}$ is smaller than $a^l_{mn}$ if $i\le l$, $j\le m$ and $k\le n$. The scaling factor $t_i$ in our situation could be described by a pair of diagonal matrices $(A , B)$ where $$A = \text{diag} (s_1^{-3}s_2^{-1}s_3^{-1}, s_1s_2^{-1}s_3^{-1}, s_1s_2s_3^{-1}, s_1s_2s_3^{3} )$$ and $$B =\text{diag} (s_4^{-4}s_5^{-3}s_6^{-2}s_7^{-1},  s_4s_5^{-3}s_6^{-2}s_7^{-1},  s_4s_5^{2}s_6^{-2}s_7^{-1} ,  s_4s_5^{2}s_6^{3}s_7^{-1},  s_4s_5^{2}s_6^{3}s_7^{4} ). $$ Then $t_{lij} = A_l B_i B_j$ is the scaling factor for the $a^l_{ij}$ entry. Since the fundamental domain requires that all $s_i \ge C$, this partial order also gives the partial order on the magnitude of $r_{lij}= \lambda t_{lij}$. 
          		
          		There are many regions in the fundamental domain that provides irreducible $S_5$-orders. We will consider the biggest region first, i.e., the points with $a^1_{12} \ne 0$. This region requires that $\lambda s_1^{-3}s_2^{-1}s_3^{-1}s_4^{-3}s_5^{-6}s_6^{-4}s_7^{-2} \ge \kappa$, therefore $r_{lij} \ge \kappa$ for all $l, i,j$. Let us denote this region in $\mathcal{F}$ to be  $D_\lambda = \{ s_i\ge C_i\mid s_1^{3}s_2s_3s_4^{3}s_5^{6}s_6^{4}s_7^{2}\le \lambda /\kappa\}$. So we could apply Theorem \ref{mrBoverq} directly. Let's call this count $N^1( Y ; X)$. The corresponding integrand, i.e., the number of lattice points in the expanding ball $gB$ where $g\in D_{\lambda}$ is bounded by
          		\begin{equation}
          		\begin{aligned}           		
          		L^1 =  &\sharp \{x \in mrB\cap V^{(i)}_{\mathbb{Z}} \mid  x (\text{mod } q) \in Y(\mathbb{Z}/ q\mathbb{Z})\}\\
          		= &O(\frac{\lambda^n}{q^k}) \cdot C^{\omega(q)} \cdot \max\{1, \frac{q}{\lambda t_i}, \frac{q^2}{\lambda ^2t_i t_j} , \dots, \frac{q^k}{\lambda ^k \prod_{i= i_1}^{i_k} t_i}\}\\    		 
          		= &  O(\frac{\lambda^{40}}{q^4}) \cdot C^{\omega(q)} \cdot \max\{1, \frac{q}{\lambda t_{112}}, \frac{q^2}{\lambda^2 t_{112} t_{113}}, \frac{q^2}{\lambda^2 t_{112} t_{212}},  \frac{q^3}{\lambda^3 t_{112} t_{113} t_{123}} , \frac{q^3}{\lambda^3 t_{112} t_{113} t_{114}}, \\
          		& \frac{q^3}{\lambda^3 t_{112} t_{113} t_{212}}, \frac{q^3}{\lambda^3 t_{112} t_{212} t_{312}},  \frac{q^4}{\lambda^4 t_{112} t_{113} t_{114}t_{123}}, \frac{q^4}{\lambda^4 t_{112} t_{113} t_{114}t_{212}}, \frac{q^4}{\lambda^4 t_{112} t_{113} t_{123}t_{212}},\\& \frac{q^4}{\lambda^4 t_{112} t_{113}t_{212}t_{213}}, \frac{q^4}{\lambda^4 t_{112} t_{113}t_{212}t_{312}} , \frac{q^4}{\lambda^4 t_{112} t_{212}t_{312}t_{412}} \}. \\
          		\end{aligned}
          		\end{equation}
          		To integrate $L^1$ over $D_\lambda$ and then against $\lambda$, we just need to focus on the inner integral over $D_\lambda$, and see whether the integral of those product of $t_{lij}$ over $D_\lambda$ produces $O(1)$ or $\lambda^r$ for some $r\ge 0$ as the result.  If it is $O(1)$, then we just need to integrate against $\lambda$ and get the expected estimate, i.e., $\frac{X^{40-i}}{q^i}$ for $0\le i\le 4$ where $i$ is the number of $t_{lij}$ factors in the product; if it is $\lambda^r$ for some power $r>0$, then we will get a bigger power of $X$. 
          		
          		For example, $t^{-1}_{112} = s_1^{3}s_2s_3s_4^{3}s_5^{6}s_6^{4}s_7^{2}$ and $dg = \delta_5 ds^{\times} = s_1^{-8}s_2^{-12}s_3^{-8}s_4^{-20}s_5^{-30}s_6^{-30}s_7^{-20} ds^{\times}$, therefore $t^{-1}_{112} \delta _5$ contains $s_i$ with negative power for each $i$. So after integrating over $D_\lambda$, it is $O(1)$. Same thing holds for all other products listed as above except: $t_{112} t_{113} t_{123}$, $t_{112} t_{113}t_{114}$, $t_{112} t_{113}t_{114}t_{123}$, $t_{112} t_{113}t_{114}t_{212}$, $t_{112} t_{113}t_{123}t_{212}$. All these products have at most $4$ $t_{lij}$ factors, so the biggest power we could get for $s_4$, $s_5$, $s_6$ and $s_7$ should be $(B_1B_2)^4 = s_4^{-12}s_5^{-24}s_6^{-16}s_7^{-8}$, so those later $s_i$ is never a problem. 
          		
          		Among the product with $3$ factors, the $s_i$ part for small $i$ in $t_{112} t_{113} t_{123}$ and $t_{112} t_{113}t_{114}$ is $s_1^{-9}s_2^{-3}s_3^{-3}$. Since $s_1\le O(\lambda^{1/3})$, the integral over $D_{\lambda}$ should be $O(\lambda^{1/3})$. Among the product with $4$ factors, $t_{112} t_{113}t_{114}t_{212}$ and $t_{112} t_{113}t_{123}t_{212}$ has factor $s_1^{-8}s_2^{-4}s_3^{-4}$, while $t_{112} t_{113}t_{114}t_{123}$ has a bigger term $s_1^{-12}s_2^{-4}s_3^{-4}$, whose integral ends up being $O(\lambda^{4/3})$. 
          		
          		So the whole result is:
          		\begin{equation}
          		\begin{aligned}
          		N^{1}(Y; X) &  \le \frac{1}{M_i} \int^{O(X^{1/40})}_{\lambda = O(1)} \int_{D_{\lambda}} L^1 s_1^{-8}s_2^{-12}s_3^{-8}s_4^{-20}s_5^{-30}s_6^{-30}s_7^{-20}\text{d}s^{\times} \text{d}\lambda^{\times} \\
          		&  = O(C^{\omega(q)} )\cdot  \max \{  \frac{X}{q^4},  \frac{X^{39/40}}{q^{4-1}}, \frac{X^{38/40}}{q^{4-2}}, \frac{X^{(37+1/3)/40}}{q^{4-3}} ,\frac{X^{(36+ 4/3)/40}}{q^{4-4}} \}\\
          		& = O(C^{\omega(q)} )\cdot  \max \{  \frac{X}{q^4}, \frac{X^{38/40}}{q^{4-2}},\frac{X^{(36+ 4/3)/40}}{q^{4-4}} \}.
          		\end{aligned}
          		\end{equation}
          		
          		We know that there are a lot of regions containing irreducible points for $S_5$ extensions. However notice that the last term above is $X^{(37+1/3)/40}$, therefore we will not compute for those regions with a total counting smaller than this. They must contribute an even smaller counting when we consider this restriction in those regions. By \cite{Bha10} Table $1$, we can see that there are still three left to be considered when $a^1_{12} =0$: \\
          		2. $a^1_{13}\ne0$, $a^2_{12} \ne 0$; \\
          		3. $a^1_{13} = 0$ but $a^1_{14}, a^1_{23}, a^2_{12} \ne 0$; \\
          		4. $a^2_{12} =0$, but $a^1_{13}, a^3_{12} \ne 0$.
          		
          		For $2$, $D_{\lambda} = \{ s_i\ge C_i \mid  s_1^{3}s_2s_3s_4^{3}s_5s_6^{4}s_7^{2}\le \lambda /\kappa,  s_1^{-1}s_2s_3s_4^{3}s_5^{6}s_6^{4}s_7^{2}\le \lambda /\kappa  \}$. The definition of $D_{\lambda}$ makes it clear that for all $t_{lij}\ge t_{113}, t_{212}$ in the partial order we define, we have $t_{lij} \ge \kappa$. And $t_{112}$ could be arbitrarily small. So we will assume $t_{112}$ to be $1$ when we plug into Theorem \ref{mrBoverq} and get an upper bound on $L^2$: 
          		\begin{equation}
          		\begin{aligned}
          		L^2 = & O\left(\frac{\prod_{i=2}^{40} r_i}{q^k}\right) \cdot C^{\omega(q)} \cdot \max\{1, q, \frac{q^2}{r_i} , \dots, \frac{q^k}{ \prod_{i= i_1}^{i_{k-1}} r_i}\}\\ 
          		= &  O(\frac{\lambda^{40}}{q^4}) \cdot C^{\omega(q)} \cdot \max\{ \frac{q}{\lambda t_{112}}, \frac{q^2}{\lambda^2 t_{112} t_{113}}, \frac{q^2}{\lambda^2 t_{112} t_{212}},  \frac{q^3}{\lambda^3 t_{112} t_{113} t_{123}} , \frac{q^3}{\lambda^3 t_{112} t_{113} t_{114}}, \\
          		& \frac{q^3}{\lambda^3 t_{112} t_{113} t_{212}}, \frac{q^3}{\lambda^3 t_{112} t_{212} t_{312}},  \frac{q^4}{\lambda^4 t_{112} t_{113} t_{114}t_{123}}, \frac{q^4}{\lambda^4 t_{112} t_{113} t_{114}t_{212}}, \frac{q^4}{\lambda^4 t_{112} t_{113} t_{123}t_{212}},\\
          		& \frac{q^4}{\lambda^4 t_{112} t_{113}t_{212}t_{213}}, \frac{q^4}{\lambda^4 t_{112} t_{113}t_{212}t_{312}} , \frac{q^4}{\lambda^4 t_{112} t_{212}t_{312}t_{412}} \}. \\
          		\end{aligned}
          		\end{equation}
          		The list $L^2$ contains everything in $L^1$ except the first term $O(\frac{\lambda^{40}}{q^k})\cdot C^{\omega(q)}$. As considered before, we only need to focus on those difficult terms and it suffices to see that $s_1 \le O(\lambda^{1/3})$ again in this $D_{\lambda}$. 
          		
          		For $3$ and $4$, things can be done similarly. In case $3$, $a_{114} \ne 0$ and $a_{123}\ne 0$ together implies that $t^{-1}_{114}t^{-1}_{123} = s_1^{6}s_2^{2}s_3^{2}s_4s_5^{2}s_6^{3}s_7^{4} \le O(\lambda^2)$, so $s_1\le O( \lambda^{1/3} )$. In case $4$, $a_{113} \ne 0$ implies that $s_1^{3}s_2s_3s_4^{3}s_5s_6^{4}s_7^{2} \le O(\lambda)$, so $s_1\le O( \lambda^{1/3} )$. 
          		
          		Therefore, we get the uniformity result for $N_q(S_5, X) = O( \frac{X}{q^{4/15-\epsilon}})$. 
          	\end{proof}  
         In order to prove Theorem \ref{uni5} over arbitrary number field $K$, we will need to prove the analogue of Theorem \ref{mrBoverq} over an arbitrary number field $K$. The setup is a bit more complex than the case over $\mathbb{Q}$. The variety that describes points with extra ramification is defined over $O_K$. Since $\rho:O_K\hookrightarrow \mathbb{R}^{r}\bigoplus \mathbb{C} ^{s}$ is a full lattice, an $O_K$-point on the variety corresponds to a lattice point in $\mathbb{R}^{dn} \simeq (\mathbb{R}^{r}\bigoplus \mathbb{C} ^{s})^n$ where $d$ is the degree of $K/\mathbb{Q}$. Denote $\mathbb{R}^{r}\bigoplus \mathbb{C}^{s}$ by $F$. 
         The scaling vector is $r = (r_1, \dots, r_n)$ where $r_i\in F$ for each $i$. Define $|\cdot|_{\infty}$ to be the norm in $F$: $|v|_{\infty} = \prod_{r} |v_i|_i \prod_{s} |v_j|_j$ where $|\cdot|_i$ means standard norm in $\mathbb{R}$ at real places and square of standard norm in $\mathbb{C}$ at complex places.         
       \begin{theorem}\label{mrBoverK}
       	Let $B$ be a compact region in  $F^n \simeq\mathbb{R}^{nd}$ with finite measure. Let $Y$ be any closed subscheme of $\mathbb{A}^{n}_{O_K}$ of codimension $k$. Let $r = (r_1, \dots, r_n)$ be a diagonal matrix of non-zero elements where $|r_i|_{\infty} \ge \kappa$ for a certain $\kappa$. Let $q$ be a square free prime ideal in $O_K$ and $m$ be a lower triangle unipotent transformation in $GL_n(F)$. Then we have 
       	\begin{equation}
       	\begin{aligned}
       	&\sharp \{  a\in mrB\cap (O_K)^n\mid a (\text{mod }  q)\in Y(O_K/qO_K) \} \\
       	= & O(\frac{\prod_{i=1}^n |r_i|_{\infty}}{|q|^k}) \cdot C^{\omega(q)} \cdot \max\{1, \frac{|q|}{|r_i|_{\infty}}, \frac{|q|^2}{|r_i r_j|_{\infty}} , \dots, \frac{|q|^k}{ \prod_{i= i_1}^{i_k} |r_i|_{\infty}}\}
       	\end{aligned}
       	\end{equation}
       	where the implied constant depends only on $B$, $Y$ and $\kappa$, and $C$ is an absolute constant only depending on $Y$. 
       \end{theorem}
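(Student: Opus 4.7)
The plan is to mimic the proof of Theorem \ref{mrBoverq} essentially verbatim, with the only genuinely new ingredient being a base-case lattice-point count for the Minkowski lattice $O_K \hookrightarrow F$ in place of $\mathbb{Z}\hookrightarrow\mathbb{R}$. Induct jointly on $n$ and $k$, with base case $n=k=1$. In that base case, $Y$ is cut out by a nonzero $f\in O_K[x]$. For all but finitely many prime ideals $\mathfrak{p}$ of $O_K$, the reduction $f\bmod\mathfrak{p}$ is a nonzero polynomial of degree $\leq \deg f$ and so has at most $\deg f$ roots; by the Chinese remainder theorem $|Y(O_K/qO_K)|\leq C^{\omega(q)}$ with $C=\deg f$. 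The finitely many bad primes $\mathfrak{p}$ (those that divide every coefficient of a chosen defining equation) contribute a bounded factor absorbed into the implied constant depending on $Y$. Given a residue $a_0\in Y(O_K/qO_K)$, the set of lifts is the intersection of the shifted lattice $a_0+qO_K$ with $rB\subset F$. Since $qO_K$ has covolume $|q|\cdot\mathrm{covol}(O_K)$ and $\mathrm{vol}(rB)=|r|_{\infty}\mu(B)$, Davenport/Lipschitz lattice-point counting yields $|r|_{\infty}/|q|\cdot\mu(B)/\mathrm{covol}(O_K) + O(\text{boundary})$; the hypothesis $|r|_{\infty}\geq\kappa$ keeps the boundary uniformly $O(1)$, and summing over residue classes gives the claimed bound $O(C^{\omega(q)}\max\{1,|r|_{\infty}/|q|\})$.

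For the inductive step, project $\pi:\mathbb{A}^n_{O_K}\to\mathbb{A}^{n-1}_{O_K}$ onto the first $n-1$ coordinates along a direction chosen generically over the function field $K$, so that the image $\bar Y$ has codimension at least $k-1$ and, away from finitely many bad primes (absorbed into $C$), the fiber over each point of $\bar Y$ has degree bounded by $\deg f$ for any defining polynomial. By the inductive hypothesis, the number of $y\in O_K^{n-1}$ with $\pi(y)\in\pi(mrB)$ and $y\bmod q\in\bar Y(O_K/qO_K)$ satisfies the desired bound with $(n,k)$ replaced by $(n-1,k-1)$. For each such $y$, the fiber count reduces to the base case: because $m\in GL_n(F)$ is lower triangular unipotent, $(mx)_n=x_n+\sum_{i<n}m_{ni}x_i$, so once $x_1,\ldots,x_{n-1}$ are fixed the section $P_y(mrB)$ differs from $P_{y'}(r_nB')$ by a constant translate in $F$, which leaves the lattice-point count unchanged up to the same $O(1)$ boundary. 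Multiplying the base count and the per-fiber bound $O(C^{\omega(q)}\max\{1,|r_n|_{\infty}/|q|\})$, and simplifying the resulting product of maxima, yields the stated formula; the combinatorial simplification is identical to the $\mathbb{Z}$ case in Theorem \ref{mrBoverq}.

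The main obstacle is the base case, specifically obtaining a uniform Lipschitz/Davenport bound for shifted $qO_K$-cosets inside $rB\subset F$ where $r\in F^{\times}$ scales the $d$ archimedean coordinates non-uniformly. The hypothesis $|r|_{\infty}\geq\kappa$ controls only the product of coordinate magnitudes, not each one individually, so a priori the region $rB$ could be extremely thin in some directions and fat in others, making the boundary term compete with the volume term. The resolution is that $B$ is fixed (it depends only on the statement), so the $(d-1)$-dimensional volume of the boundary of $rB$ is controlled by a fixed function of $B$ and by the projections of $r$ onto lower-dimensional coordinate subspaces; the lower bound $|r|_{\infty}\geq\kappa$ together with the uniformity of $B$ and $O_K$ suffices to bound this boundary contribution by an absolute constant depending only on $B$, $\kappa$, and $K$. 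With this uniform base-case estimate in hand, the remainder of the argument is purely formal and copies the $\mathbb{Z}$ proof.
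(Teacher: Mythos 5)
Your overall plan---induct as in Theorem \ref{mrBoverq}, with the base case $n=k=1$ carrying the genuinely new content---matches the paper, and you correctly flag the base-case lattice count as ``the main obstacle.'' But your proposed resolution of that obstacle does not work. A Davenport/Lipschitz count of $(a_0+qO_K)\cap rB$ has main term $\mathrm{vol}(rB)/\mathrm{covol}(qO_K)$ plus an error governed by the lower-dimensional projections of $rB$. Since $r\in F^\times$ can be arbitrarily skew even with $|r|_\infty\ge\kappa$ (take $r=(M,M^{-1})$ in a real quadratic field, so $|r|_\infty=1$ while $rB$ is a sliver of diameter $\sim M$), those projections are unbounded; for a generic rank-$d$ lattice such a measure-one sliver can contain $\sim M$ points. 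Your assertion that ``the lower bound $|r|_\infty\ge\kappa$ together with the uniformity of $B$ and $O_K$ suffices to bound this boundary contribution by an absolute constant'' is therefore false as stated: the condition $|r|_\infty\ge\kappa$ controls volume, not shape.

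What rescues the theorem is that $qO_K$ is not a generic lattice but an \emph{ideal} lattice carrying the $O_K^\times$-action, and the paper isolates exactly the needed estimate as Lemma \ref{reglam}: for any ideal $I$ and any $\lambda\in F$, the number of $a\in I$ with $|\sigma_i(a)|_i\le|\lambda_i|_i$ for all $i$ is $O(|\lambda|_\infty/|I|)+1$, uniformly in the skewness of $\lambda$. Its proof decomposes dyadically in $|a|_\infty$, bounds the number of unit-equivalence classes in each shell through the count of integral ideals of bounded norm, and bounds the multiplicity of each class by $O(k^{r+s-1})$ via Dirichlet's unit theorem, so that the geometric decay across shells beats the polynomial growth. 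This unit-group argument is the idea your proposal is missing; without it (or an equivalent) the base case does not close. The paper also handles nonzero translates $a_0+qO_K$ not by appealing to translation-invariance of a volume/boundary bound, but by covering $B$ with sign-definite sub-boxes and, whenever a translated sub-box contains a lattice point $P$, recentering at $P$ so Lemma \ref{reglam} applies.
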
 
       In order to prove this analogue, we need the following lemma on the regularity of shapes of the ideal lattices for a fixed number field $K$. Given an integral ideal $I\subset O_K$, we can embed it to $F$ as a full lattice with covolume compared with $O_K$ to be $[O_K : I] = \Nm_{K/\mathbb{Q}}(I)$.    
       \begin{lemma}\label{reglam}
       	Let $K$ be a number field and $I\subset O_K$ be an arbitrary ideal. Given $\lambda= (\lambda_i) \in F = \mathbb{R}^{r}\bigoplus \mathbb{C} ^{s} $, then	
       	$$\sharp \{  a \in I \mid \forall i, |\sigma_i(a)|_i \le |\lambda_i|_i  \} = O(\frac{|\lambda|_{\infty}}{|I|}) + 1 $$
       	where $\sigma_i$ for $i =1, \dots, r+s$ are the Archimedean valuations of $K$ and $|\cdot|_i$ is the usual norm in $\mathbb{R}$ for real embeddings and square of the usual norm in $\mathbb{C}$ for complex embeddings . The implied constant depends only on $K$.
       \end{lemma}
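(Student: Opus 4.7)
The plan is to realize $I$ as a full-rank lattice in $F = \mathbb{R}^r \oplus \mathbb{C}^s \cong \mathbb{R}^d$ (with $d = [K:\mathbb{Q}] = r+2s$) via the Minkowski embedding $\sigma$, and to count its points inside the symmetric convex box $B_\lambda = \{v \in F : |v_i|_i \le |\lambda_i|_i \text{ for all } i\}$ by a covering argument. The covolume of $\sigma(I)$ in $F$ is a constant depending only on $K$ times $|I|$, and the volume of $B_\lambda$ is a constant depending only on $K$ times $|\lambda|_\infty$, so the heuristic main term $|\lambda|_\infty/|I|$ is exactly the volume ratio.

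The one input from algebra that makes the counting clean is the norm inequality: if $a \in I \setminus \{0\}$ then $(a) \subseteq I$, so $I \mid (a)$ as ideals and $|a|_\infty = |\Nm_{K/\mathbb{Q}}(a)| \ge \Nm(I) = |I|$. Consequently, if $|\lambda|_\infty < |I|$ then $B_\lambda$ contains no nonzero element of $\sigma(I)$, and the count is at most $1$. This case is absorbed by the ``$+1$'' term in the statement.

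In the main case $|\lambda|_\infty \ge |I|$, I construct an auxiliary symmetric box $B' = \{v \in F : |v_i|_i \le c_i\}$ with $c_i = \beta |\lambda_i|_i$ and $\beta = (|I|/(2^{d+1}|\lambda|_\infty))^{1/(r+s)}$, so that $\beta \le 1$, $B' \subseteq B_\lambda$, and $\prod_i c_i < |I|/2^d$. After tracking the normalization at complex places, the dilate $2B'$ satisfies $\prod_i |v_i|_i \le 2^d \prod_i c_i < |I|$ for every $v \in 2B'$, so the key observation forces $\sigma(I) \cap 2B' = \{0\}$. Since $B'$ is symmetric, any two points of $\sigma(I) \cap (B'+t)$ differ by an element of $\sigma(I) \cap 2B' = \{0\}$, and hence every translate of $B'$ meets $\sigma(I)$ in at most one point.

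It then remains to cover $B_\lambda$ by translates of $B'$: using the product structure (intervals at real places, discs at complex places), the number of translates needed is $\prod_i O(|\lambda_i|_i/c_i) = O(|\lambda|_\infty / \prod_i c_i) = O(|\lambda|_\infty/|I|)$, with implied constant depending only on $r$, $s$ and the geometry of covering a disc by smaller discs, hence only on $K$. The mildest technical point is to absorb the additive boundary contributions from each factor of the covering count into the leading term; this is harmless because $|\lambda|_\infty/|I| \ge 1$ in the regime under consideration. Combining the two cases yields $O(|\lambda|_\infty/|I|) + 1$, with the implied constant depending only on $K$, as required.
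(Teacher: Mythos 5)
Your proof is correct and takes a genuinely different route from the paper. The paper's argument is arithmetic: it dyadically decomposes the box into shells $W_k$ according to the size of $|a|_\infty$, counts equivalence classes of elements via the ideal-counting estimate $\sharp\{\alpha \subset O_K : \alpha \in R^{-1},\, |\alpha| < X\} = O(X)$, and then bounds the multiplicity of each class inside a shell by $O(k^{r+s-1})$ using the logarithmic embedding of the unit group (Dirichlet's unit theorem), finally summing a convergent series over $k$. Your argument is purely geometric: starting from the same norm inequality $|a|_\infty \ge |I|$ for $a \in I \setminus\{0\}$, you build a shrunken box $B'$ of the same shape as $B_\lambda$ whose double $2B'$ misses $\sigma(I)\setminus\{0\}$ outright, so each translate of $B'$ holds at most one lattice point, and then you tile $B_\lambda$ with $O(|\lambda|_\infty/|I|)$ such translates. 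Your approach is more elementary (no unit-group lattice, no ideal class decomposition, no infinite sum), handles the potential skewness of $B_\lambda$ automatically by scaling $B'$ proportionally in every coordinate, and lays bare that the statement is really a consequence of the single observation that the open norm-ball $\{|v|_\infty < |I|\}$ meets $\sigma(I)$ only at the origin. The paper's approach, on the other hand, would generalize more readily to situations where one needs asymptotics (rather than an upper bound) or where the counting region is not a coordinate box. One small point of bookkeeping you handled correctly but should keep explicit: the dilation $2B'$ scales the complex $|\cdot|_i$ (which is the square of the modulus) by $4$, which is why the combined factor is $2^r 4^s = 2^d$ and why the $2^{d+1}$ in your choice of $\beta$ is exactly what is needed.
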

       \begin{proof}
        Given $I$ in the ideal class $R$ in the class group of $K$, denote $[a]$ to be the equivalence class of non-zero $a$ in $I$ where $a\sim a'$ if $a = ua'$ for some unit $u$. Then we have \cite{SL}
       	\begin{equation}
       	\begin{aligned}
       	\sharp \{ [a] \in I \mid  |[a]|_{\infty} \le |I|X \} = \sharp \{\alpha \subset O_K \mid \alpha\in R^{-1}, |\alpha| < X  \} = O(X).
       	\end{aligned}
       	\end{equation}
       	To take advantage of the equality above, we cover the original set $W$ by a disjoint union of subsets $W_k$: 
       	\begin{equation}
       	\begin{aligned}
       	W = \{  a \in I \mid \forall i, |\sigma_i(a)|_i \le |\lambda_i|_i   \} \backslash \{ 0 \}  = \bigcup_{k\ge 1} \{  a \in I \mid \forall i, |\sigma_i(a)|_i \le |\lambda_i|_i  ,  \frac{|\lambda|_{\infty}}{2^k} \le |a|_{\infty} \le \frac{|\lambda|_{\infty}}{2^{k-1}} \} = \cup_k W_k.
       	\end{aligned}
       	\end{equation}
       	For $a\in W_k$, we have that 
       	$$  \frac{|\lambda_i|_i}{2^{k}}\le |\sigma_i(a)|_i \le |\lambda_i|_i,$$ and if $ua$ is in $W$, it must be also in the same $W_k$ since $|ua|_{\infty} = |a|_{\infty}$. So the magnitude of $u$ is bounded as $ 2^{-k}\le |\sigma_i(u)|_i \le 2^{k}$ by the above inequality. By Dirichlet's unit theorem, the units of $K$ aside from roots of unity after taking logarithm form a lattice of rank $r+s-1$ satisfying $\sum_i \ln |\sigma_i(u)|_i = 0$, therefore 
       	$$\sharp \{ u\in O_K^{\times} \mid |\ln |\sigma_i(u)|_i|\le k \}  = O (k^{r+s-1}).$$
       	So for each $[a]\in W_k$, the multiplicity is bounded by $O(k^{r+s-1})$, and the number of equivalence classes in $W_k$ is bounded by
       	\begin{equation}
       	\begin{aligned}
       	\sharp \{ [a] \in I \mid  |a|_{\infty} < \frac{|\lambda|_{\infty}}{2^{k-1}}\} \le O (\frac{|\lambda|_{\infty}}{|I|}\cdot \frac{1}{2^{k-1}}).
       	\end{aligned}
       	\end{equation}
       	Therefore 
       	\begin{equation}
       	\begin{aligned}
       	|W_k| \le O (\frac{|\lambda|_{\infty}}{|I|}) \cdot \frac{k^{r+s-1}}{2^{k-1}}.
       	\end{aligned}
       	\end{equation}
       	The total counting by summation over all $k$ is
       	$$\sharp \{  a \in I \mid \forall i, |\sigma_i(a)|_i \le |\lambda_i|_i  \}\backslash \{0 \} = \sum_k |W_k|\le O(\frac{|\lambda|_{\infty}}{|I|}) \sum_k  \frac{k^{r+s-1}}{2^{k-1}}  \le O(\frac{|\lambda|_{\infty}}{|I|}). $$
       	So the total counting with the origin is
       	$$\sharp \{  a \in I \mid \forall i, |\sigma_i(a)|_i \le |\lambda_i|_i  \} = O(\frac{|\lambda|_{\infty}}{|I|}) + 1.$$
       \end{proof}
       A corollary of this lemma is that the shape of the ideals lattices inside $O_K$ cannot be too skew. We will make this precise in the following lemma and prove it by a more direct approach.               
       \begin{lemma}\label{regsquare}
       Given a number field $K$ with degree $d$, for any integral ideal $I \subset O_K$, denote $\mu_i$ to be the successive minimum for the Minkowski reduced basis for $I$ as a lattice in $\mathbb{R}^d$. Then $\mu_i$ is bounded by
              	$$ \mu_i \le O(|I|^{1/d})$$
       for all $1\le i \le d$. The implied constant only depends on the degree of $K$, the number of complex embeddings of $K$ and the absolute discriminant of $K$. 
       \end{lemma}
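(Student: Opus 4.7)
The plan is to combine Minkowski's second theorem (upper bound on the product of successive minima) with a lower bound on the shortest vector $\mu_1$ that comes from the fact that every nonzero element of $I$ has norm at least $|I|$.

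First, I would set up the standard Minkowski embedding $\rho : O_K \hookrightarrow F = \mathbb{R}^r \oplus \mathbb{C}^s \cong \mathbb{R}^d$ and recall that $\rho(I)$ is a full rank lattice of covolume $|I| \cdot 2^{-s}\sqrt{|\Disc(K)|}$. By Minkowski's second theorem applied to $\rho(I)$ with any fixed symmetric convex body (say the unit ball of $\mathbb{R}^d$),
\[
\mu_1 \mu_2 \cdots \mu_d \;\le\; C_1(K)\, |I|,
\]
where $C_1(K)$ depends only on $d$, $s$ and $|\Disc(K)|$.

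Second, I would produce the lower bound $\mu_1 \ge c(K) |I|^{1/d}$. For any nonzero $\alpha \in I$ we have $(\alpha) \subseteq I$, hence $|\Nm_{K/\mathbb{Q}}(\alpha)| \ge |I|$. Writing $\|\rho(\alpha)\|^2 = \sum_{i=1}^{r}\sigma_i(\alpha)^2 + \sum_{j=1}^{s}|\sigma_{r+j}(\alpha)|_{\mathbb{C}}^2$, I would list the $d$ numbers obtained by taking each real term once and each complex term twice; their sum is at most $2\|\rho(\alpha)\|^2$, and by AM--GM on these $d$ numbers,
\[
\frac{2\|\rho(\alpha)\|^2}{d} \;\ge\; \Bigl(\prod_{i=1}^{r}\sigma_i(\alpha)^2 \prod_{j=1}^{s}|\sigma_{r+j}(\alpha)|_{\mathbb{C}}^4\Bigr)^{1/d} = |\Nm_{K/\mathbb{Q}}(\alpha)|^{2/d} \;\ge\; |I|^{2/d}.
\]
Thus every nonzero $\rho(\alpha) \in \rho(I)$ satisfies $\|\rho(\alpha)\| \ge \sqrt{d/2}\, |I|^{1/d}$, which gives $\mu_1 \ge c(K)|I|^{1/d}$ with $c(K)$ depending only on $d$.

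Combining the two bounds: since $\mu_1 \le \mu_2 \le \cdots \le \mu_d$,
\[
\mu_d \;\le\; \frac{\mu_1 \mu_2 \cdots \mu_d}{\mu_1^{d-1}} \;\le\; \frac{C_1(K)\,|I|}{c(K)^{d-1}\,|I|^{(d-1)/d}} \;=\; O\bigl(|I|^{1/d}\bigr),
\]
and therefore $\mu_i \le \mu_d \le O(|I|^{1/d})$ for every $1 \le i \le d$, with the implied constant depending only on $d$, $s$ and $|\Disc(K)|$. The only mildly delicate step is keeping track of the AM--GM passage from $\|\rho(\alpha)\|^2$ to $|\Nm(\alpha)|^{2/d}$ at complex places (the factor of $2$ from counting each complex term twice), but this is a routine Minkowski-style computation rather than a genuine obstacle.
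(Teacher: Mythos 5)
Your proof is correct and takes essentially the same route as the paper: a lower bound $\mu_1 \gg |I|^{1/d}$ from the norm inequality $|\Nm_{K/\mathbb{Q}}(\alpha)| \ge |I|$ combined with AM--GM across the Archimedean places, followed by Minkowski's second theorem to bound the product $\mu_1\cdots\mu_d$ and hence each $\mu_i$. The only cosmetic difference is that you make the final division $\mu_d \le (\mu_1\cdots\mu_d)/\mu_1^{d-1}$ explicit, whereas the paper leaves that step implicit.
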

      \begin{proof}
              	Given an integral ideal $I$, and an arbitrary non-zero element $\alpha \in I$, we have $(\alpha) \subset I$, so $|(\alpha)| \ge |I|$. The length of $\alpha$ in $\mathbb{R}^{d}$ is 
              	\begin{equation}
              	\begin{aligned}	
              	&\sqrt{|\alpha|_1^2 + \cdots + |\alpha|_r^2 + |\alpha|_{r+1} + \cdots + |\alpha|_{r+s} }\\
              	\ge & \sqrt{ d (\prod_{1\le i\le r} |\alpha_i|^2\prod_{r+1\le i\le r+s} \frac{|\alpha|_i^2}{4})^{1/d}}\\
              	\ge & \sqrt{d} 2^{-s/d} |(\alpha)|^{1/d}\\
              	\ge & \sqrt{d} 2^{-s/d} |I|^{1/d}.
              	\end{aligned}
              	\end{equation}
              	The first inequality comes from the fact that the arithmetic mean is greater than the geometric mean. While Minkowski's first theorem guarantees that $\mu_1 \le O(|I|^{1/d})$, we can bound $\mu_1$ by $O(|I|^{1/d})$ in the other direction. This amounts to saying that the first minimum $\mu_1$ of Minkowski's reduced basis is exactly at the order of the diameter $O(|I|^{1/d})$. Moreover Minkowski's second theorem states that
              	$$\prod_{1\le i\le d} \mu_i \le 2^d D_K^{1/2}|I|,$$
              	therefore for all $i\le d$,
              	$$\mu_i \le O(|I|^{1/d})$$
              	where the implied constant only depends on $d$, $s$ and $D_k$. 
      \end{proof}
      \begin{remark}
      	By Lemma \ref{reglam}, if we pick $\lambda$ with $|\lambda|_{\infty} = O(|I|)$ such that $|\lambda_i|_i = O(|I|^{1/d})$ for real places and $|\lambda_i|_i = O(|I|^{2/d})$ for complex places, we get a square box with side length $O(|I|^{1/d})$ in $\mathbb{R}^d$. Since the first term in Lemma $4.7$ could be bounded by $O(\frac{|\lambda|_{\infty}}{|I|}) = O(1) $, we can find a uniform upper bound of  $C(|I|^{1/d})$ on the side length such that the only lattice point in a smaller square box is the origin. Therefore the first successive minimum $\mu_1$ is greater than the upper bound. 
      	\end{remark}
              On the other hand, the Minkowski's reduced basis generates the whole lattice with covolume $|I|D_K^{1/2}$, so the angle among the vectors in the basis is away from zero. This basically means that among the family of lattices of all integral ideals of $K$ under Minkowski's reduced basis all look like square boxes, and we can find a fundamental domain within the square box. 
              \begin{corollary}\label{regc}
              	Given a number field $K$ with degree $d$, for any integral ideal $I \subset O_K$ and any residue class $c\in O_K/IO_K$, denote $c_i$ to be the $i$-th coordinate in $\mathbb{R}^d$. Then we can find a representative $c$ such that each $$ |c_i| \le O(|I|^{1/d})$$
              	for all $1\le i \le d$. The implied constant depends only on $K$. 
              \end{corollary}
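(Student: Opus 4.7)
The plan is to combine Lemma~\ref{regsquare} with a straightforward reduction-modulo-lattice argument. Since that lemma controls the successive minima of $I$ viewed as a full-rank lattice in $\mathbb{R}^d$ via the Minkowski embedding, I would extract a Minkowski reduced basis whose vectors are all short, and then reduce an arbitrary representative $c\in O_K$ into the fundamental parallelepiped spanned by this basis.

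First I would fix an integral ideal $I\subset O_K$ and let $v_1,\dots,v_d\in I$ be a Minkowski reduced basis of $I$ as a rank-$d$ lattice in $\mathbb{R}^d$. A classical fact about Minkowski reduced bases is that each basis vector satisfies $\|v_i\|\le c_d\,\mu_i$, where $c_d$ depends only on $d$ and $\mu_i$ denotes the $i$-th successive minimum. Combined with Lemma~\ref{regsquare} this yields the uniform bound $\|v_i\|=O(|I|^{1/d})$, with implied constant depending only on $K$.

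Next, given any residue class in $O_K/IO_K$, pick an arbitrary representative $c\in O_K$ and expand $c=\sum_{i=1}^d t_i v_i$ with unique $t_i\in\mathbb{R}$ (possible since the $v_i$ form an $\mathbb{R}$-basis of $\mathbb{R}^d$). Setting $c':=c-\sum_i\lfloor t_i\rfloor v_i$ produces an element of $O_K$ congruent to $c$ modulo $I$ which lies in the half-open fundamental parallelepiped $\{\sum_i s_i v_i:0\le s_i<1\}$. Writing $(v_i)_j$ for the $j$-th coordinate of $v_i$ in $\mathbb{R}^d$, the $j$-th coordinate of $c'$ is then bounded by $\sum_{i=1}^d|(v_i)_j|\le\sum_{i=1}^d\|v_i\|=O(|I|^{1/d})$, which is the desired conclusion.

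The only minor subtlety is the invocation of the inequality relating vectors of a Minkowski reduced basis to the successive minima, but this is classical and the implicit constant is dimension-dependent only, hence depends only on $K$. I do not expect any substantial obstacle here: the corollary really is a direct consequence of Lemma~\ref{regsquare} combined with the geometric fact that a Minkowski reduced basis is near-orthogonal, so its fundamental parallelepiped has diameter of the same order as the longest basis vector.
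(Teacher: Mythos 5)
Your proof is correct and takes essentially the same route as the paper: both lean on Lemma~\ref{regsquare} to bound the successive minima, pass to a Minkowski reduced basis whose vectors therefore have length $O(|I|^{1/d})$ (via the classical bound $\|v_i\|\le c_d\mu_i$), and reduce an arbitrary representative into the fundamental parallelepiped of that basis. Your write-up simply makes precise what the paper gestures at with the phrase that ``we can find a fundamental domain within the square box,'' and cleanly avoids the paper's detour through near-orthogonality of the reduced basis by just bounding the parallelepiped diameter by $\sum_i\|v_i\|$.
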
      	
        \begin{proof}[\textbf{Proof of Theorem \ref{mrBoverK}}]
        	The case where $k =0$ is trivial since the number of lattice points in the box is $O(\prod_{i=1}^n |r_i|_{\infty})$. It suffices to prove the statement for the initial case when $k = 1$ and $n=1$. The induction procedure works similarly with Theorem \ref{mrBoverq}. 
        	
        	There is only one polynomial $f(x)$ to be considered for $n=1$ and $k=1$. Since $q$ is square free, the number of solution in $O_K/qO_K$ is bounded by $C^{\omega(q)}$ by Chinese remainder theorem. Therefore the solutions of $f(\text{mod }q)$ in $O_K$ is a union of $C^{\omega(q)}$ translations $q+c$ of the lattice $q$ where $c$ is a certain residue class in $O_K/qO_K$ that is also a solution. 
        	
        	Lemma \ref{reglam} states that for arbitrary $r\in F$,
        	$$\sharp \{  a \in rB \cap O_K \mid a\in 0+q  \} = O (\max \{ \frac{|r|_{\infty}}{|q|}, 1 \})$$
        	when $B$ is a unit square in $F$. It follows that the equality is true for any general compact set $B$ since it could be covered by a square and then the implied constant will also depend on $B$. For other nontrivial translations by a root $c$, we have
        	\begin{equation}
        	\begin{aligned}	
        	 \sharp \{  a \in rB \cap O_K \mid a\in c+q  \} 
        	=  \sharp \{  a\in (rB-c) \cap O_K \mid a\in q  \}.
        	\end{aligned}
        	\end{equation}
        	So it is equivalent to consider the number of lattice points in a translation of the box. We could cover $B$ by $2^n$ sub-boxes $B_s$ which is defined by sign in each $\mathbb{R}$ space. Then $rB-c$ could be covered by $rB_s -c$. It suffices to count the lattice points in each $rB_s-c$ and add them up. For each $s$, if there exists one lattice point $P \in rB_s-c$, then we can cover $rB_s-c$ by $P+rB_s$, and the number of lattice points in $rB_s+P$ is equivalent to that in $rB_s$ which is
        	$$\sharp \{  (P+rB_s) \cap q \} = \sharp \{  rB_s\cap q \} \le O (\max \{ \frac{|r|_{\infty}}{|q|}, 1 \}).$$
        	If there are no lattice points in $B_s$, then there is nothing to add. Altogether we have that for any residue class $c$ and any compact set $B$,
        	$$ \sharp \{  a \in rB \cap O_K \mid a\in c+q  \}\le O ( 2^n \max \{ \frac{|r|_{\infty}}{|q|}, 1 \}) =  O ( \max \{ \frac{|r|_{\infty}}{|q|}, 1 \}).$$
        	Here the implied constant depends only on $B$ and $K$. 
        	Adding up all solutions of $f$, we get 
        	$$\sharp \{  a\in rB\cap O_K \mid f(a) \equiv 0 \text{ mod } q \} = O(\frac{ |r|_{\infty}}{|q|}) \cdot C^{\omega(q)} \cdot \max\{1, \frac{|q|}{|r|_{\infty}}\}.$$        	
        	This finishes the proof for the case $k = 1$, $n=1$. 
        \end{proof}
       	Finally, based on Theorem \ref{mrBoverK}, we can prove Theorem \ref{uni5} over a number field $K$. 
       	\begin{proof}[\textbf{Proof of Theorem \ref{uni5} over $K$}]
       		We will follow the notation \cite{BSW15} in this proof. Counting $S_n$-number fields for $n= 3,4,5$ over a number field $K$ is different from that over $\mathbb{Q}$ mostly in two aspects. 
       		
       		Firstly, the structure of finitely generated $O_K$-module is more complex than that of $\mathbb{Z}$, therefore the parametrization of $S_n$ number fields over $K$ will involve other orbits aside from $G(O_K)$-orbits of $V(O_K)$ points. Actually finitely generated $O_K$-modules with rank $n$ are classified in correspondence to the ideal class group $\text{Cl}(K)$ of $K$. So for each ideal class $\beta$, we get a lattice $\mathcal{L}_\beta$ corresponding to $S_n$ extensions $L$ with $O_L$ corresponding to $\beta$. We just need to count the number of orbits in $\mathcal{L}_\beta$ under the action of $\Gamma_\beta$ where $\Gamma_\beta$ is commensurable with $G(O_K)$ and $\mathcal{L}_\beta$ is commensurable with $V(O_K)$. See section $3$ in \cite{BSW15} for more details. 
       		
       		Secondly, the reduction theory over a number field $K$ is slightly different in that the description of fundamental domain requires the introduction of units, and this effect of units is especially beneficial for summation over fundamental domain. The most significant difference is at the description of the torus. Originally over $\mathbb{Q}$, we have $G(\mathbb{R})/ G(\mathbb{Z})  = NA\mathcal{K}\Lambda$ \cite{Bha10} where $A$ is an $l$-dimensional torus ($l=7$ for $S_5$) embedded into $\text{GL}_n(\mathbb{R})$ ($n=40$ for $S_5$) as diagonal elements
       			$$ T(c) = \{ t(s_1,\dots, s_l)\in T(\mathbb{R}) = \mathbb{G}^l_m(\mathbb{R})\mid \forall i, s_i\ge c\}.$$
       		Given a number field $K$, recall that $\rho:O_K\hookrightarrow F = \mathbb{R}^{r}\bigoplus \mathbb{C}^{s}$ is the embedding of $O_K$ as a full lattice in $\mathbb{R}^d$. Then $A$ could be described as a subset of 
       		$$ T(c,c') = \{ t = t(s_1,\dots, s_l)\in T(F) = \mathbb{G}^l_m(F)\mid \forall i, |s_i|_{\infty} \ge c , \forall j,k, \ln \frac{|s_i|_{j}}{|s_i|_{k}} \le c' \}.$$
       		Here $|s_i|_j \le O(|s_i|_{k})$ for all $j, k$ guarantees that $|s_i|_j\sim |s_i|_{k}$, thus $|s_i|_v\sim |s_i|_{\infty}^{1/(r+s)}$. Therefore, if we have a bound that $|s_i|_{\infty} \le A$, then we can get the bound $ |s_i|_v\le O(A^{1/r})$. See section $4$ \cite{BSW15} for more details. 
       		
            Recall that we need to compute
       		\begin{equation}
       		\begin{aligned}
       		N(S; X) = \frac{1}{M_i}\int_{g\in \mathcal{F}} \sharp \{x \in S^{irr} \cap gB\cap V^{(i)}_{F}: |\text{Disc}(x)|_{\infty}<X \}  dg
       		\end{aligned}
       		\end{equation}
       		where $V_F^{(i)}$ is a subspace of $V_F$ with a certain signature, and $B$ is a compact ball in the space $V_F$ that is invariant under the action of the orthogonal group $K$. By Theorem \ref{mrBoverK}, the integrand is 
       		\begin{equation}
       		\begin{aligned} 
       		& \sharp \{x \in S^{irr} \cap gB\cap V^{(i)}_{F}: |\text{Disc}(x)|_{\infty}<X \} 
       		\le \sharp \{x \in m\lambda t B\cap \mathcal{L} \mid  x (\text{mod } q) \in Y(\mathbb{Z}/ q\mathbb{Z})\}\\
       		= & O(\frac{|\lambda|_{\infty}^n}{|q|^k}) \cdot C^{\omega(q)} \cdot \max\{1, \frac{|q|}{|\lambda t_i|_{\infty}}, \frac{|q|^2}{|\lambda ^2t_it_j|_{\infty}} , \dots, \frac{|q|^k}{|\lambda ^k \prod_{i= i_1}^{i_k} t_i|_{\infty}}\}.
       		\end{aligned}
       		\end{equation}
       		Here in order to present the result in a similar form with that over $\mathbb{Q}$, for each $\lambda\in \mathbb{R}^{+}$ we denote $\lambda$ to be the diagonal matrix such that $|\text{Disc}(\lambda v)|_{\infty} = |\lambda|^n_{\infty} |\text{Disc}(v)|_{\infty}$ where $n= 40$ for $S_5$. 
            
            The first case is to compute $G(O_K)$-orbits in $V(O_K)$, which corresponds to the trivial class in $\text{Cl}(K)$. Denote $\mathcal{F}$ to be  $G(F)/G(O_K)$ and $\mathcal{L}$ to be the image of $V(O_K)$ in $V(F)$. We first look at the case where $a^{1}_{12}\ne 0$. Since $\mathcal{L}$ is a lattice, $x$ with non-zero $a^1_{12}$ is away from zero and $|a|_{\infty}$ could be bounded from below by $\kappa$, so we would only integrate over $$D_\lambda = \{ t= t(s_i)\in T(c,c')\mid |s_1^{3}s_2s_3s_4^{3}s_5^{6}s_6^{4}s_7^{2}|_{\infty}\le \lambda /\kappa\}.$$ The integral over $F= \mathbb{R}^d$ gives the same result as over $\mathbb{Q}$
            \begin{equation}
            \begin{aligned}
            &\int_{O(1)}^{A} |s|^u_{\infty}\text{d}s^{\times} \le \prod_{1\le i\le r} \int_{O(1)}^{O(A^{1/(r+s)})} s_i^{u}\text{d}s_i^{\times} \prod_{r+1\le i\le r+s}
         \int_{O(1)}^{O(A^{1/2(r+s)})} r_i^{2(u-1)}r_i\text{d}r_i = O(A^{u}).
            \end{aligned}
            \end{equation}
            So we will end up with the same result over $K$.

       		For fields corresponding to other ideal class $\beta \in \text{Cl}(K)$, we can similarly compute the average number of lattice points in $\mathcal{F}v$ for $v\in B$ with bounded discriminant. Denote $\mathcal{F}_\beta = \Gamma_\beta\backslash G(F)$. By \cite{BSW15}, we can cover $\mathcal{F}_\beta$ by finitely many $g_i\mathcal{F}$ where $g_i \in G(O_K)$ are representatives of $G(O_K)/ (G(O_K)\cap \Gamma_\beta)$. Let's call $\mathcal{D}_i = \mathcal{F}_\beta \cap g_i\mathcal{F}$, then we just need to sum up
       		\begin{equation}\label{sumbeta}
       		\begin{aligned}
       		&\frac{1}{M_i}\int_{g\in \mathcal{D}_i} \sharp \{x \in S^{irr} \cap gB\cap V^{(i)}_{F}: |\text{Disc}(x)|_{\infty}<X \}  dg\\
       		\le  &   \frac{1}{M_i}\int_{g\in g_i\mathcal{F}} \sharp \{x \in S^{irr} \cap gB\cap V^{(i)}_{F}: |\text{Disc}(x)|_{\infty}<X \}  dg\\
       		\le &  \frac{1}{M_i}\int_{g\in \mathcal{F}} \sharp \{x \in g_i^{-1}S^{irr} \cap gB\cap V^{(i)}_{F} \}  dg.
       		\end{aligned}
       		\end{equation}  
       		As in \cite{BSW15} section $3$,
       		$$ \mathcal{L}_{\beta}:= V_n(K) \cap \beta^{-1} \prod_{p\nmid \infty} V(O_p) \prod_{p|\infty} V(F_p)$$
       		where $\beta$ is a representative of the double coset $\text{cl}_S = ( \prod_{p\nmid \infty} G(O_p))\backslash G(\mathbb{A}_f) /G(K)$. Here $\mathbb{A}_f $ is the restricted product of $K_p^{\times}$ for all finite places $p$. So given a class $\beta$, we can choose a representative such that $\beta_p$ is the identity element in $G(O_p)$ except at a finite set of places $S$. At $p \in S$,  $\beta_p$ is in $G(K_p)$. Given $v\in \mathcal{L}_{\beta}$, we have 
       		$$v_p\in \beta_p^{-1} V(O_p).$$
       		Since $\beta_p^{-1}$ can be regarded as a linear action, there must exist $r$ large enough such that 
       		$$v_p \pi^r \in \beta_p^{-1} \pi^r V(O_p) \in V(O_p)$$
       		and $(\pi^r) = (a_p)$ is a principle integral ideal in $O_K$ where $\pi$ is a uniformizer for $O_p$. Glue all the $a_p$ and we get $a = \prod_{p\in S} a_p$. By the way it is defined, we have that 
       		$a\mathcal{L}_\beta$ is in $O_K$ and $a\in O_p^{\times }$ at $p\notin S$. So for $p$ outside $S$, $v\in \mathcal{L}_\beta$ is in $Y(O_K/p)$, if and only if, $av \in O_K$ is in $Y(O_K/p)$. 
            Therefore we can consider $a\mathcal{L}_{\beta}$ inside $O_K$ instead and do not lose the information of ramification at all but finitely many places. Since there are only finitely many ideal classes it will not affect the form of the uniformity estimate but only the implied constant. From now on, we will assume $\mathcal{L}_{\beta}$ to be in $O_K$. 
       		
       		In (\ref{sumbeta}), $S^{irr}$ denotes the set of totally ramified points at $q$ in $\mathcal{L}_{\beta}$. If $q$ is a square free integral ideal away from $S$ and $x \in S^{irr}$ satisfies $x\in O_K$ and $x\in Y(O_K/ q)$, then $g_i^{-1} v \in O_K$ and $ g_i^{-1} v \in g_i^{-1} Y(O_K/q)$. Denoting $g_i^{-1} Y = Y_i$, then it suffices to count 
       		\begin{equation}
       		\begin{aligned}
       		\sharp \{x \in g_i^{-1}\mathcal{L}_{\beta}\cap gB \cap Y_i(O_K/q) \}.
       		\end{aligned}
       		\end{equation}  
       		Since $g_i^{-1}Y$ only differs with $Y$ by a linear transformation on coordinates, $Y_i$ has the same codimension. Apply Theorem \ref{mrBoverK} to get the same estimates. To consider arbitrary square free ideal $q = q_1q_2$ with $q_2$ containing the involved factors in $S$, we can estimate with $q_1$ and replace $|q_1|$ by $|q|$ with a difference of at most $O(1)$ since there are only finitely many $p\in S$. 	
       	\end{proof}

       \subsection{Local uniformity for Abelian extensions}
       It has been proved \cite{Wri89} that Malle's conjecture is true for all abelian groups over any number field $k$.
       \begin{theorem}
	   Let $A$ be a finite abelian group and $k$ be a number field, the number of $A$-extensions over $k$ with the absolute discriminant bounded by $X$ is
	                 $$ N(A,X) \sim C X^{1/a(A)}(\ln X)^{b(k,A)-1}.$$
       \end{theorem}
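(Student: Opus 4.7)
The plan is to follow Wright's class field theory approach. The first step is the conductor-discriminant formula: for an $A$-extension $L/k$ corresponding to a surjection $\rho: G_k \twoheadrightarrow A$, we have $\disc(L/k) = \prod_{\chi \in \hat{A} \setminus \{1\}} \mathfrak{f}(\chi \circ \rho)$, where $\hat{A}$ is the character group. Taking absolute norms, $\Disc(L)$ becomes a multiplicative function of the characters $\chi \circ \rho$, which by Pontryagin duality and global class field theory correspond to Hecke characters of $k$ of order dividing $\exp(A)$. This reduces the counting problem to counting tuples of Hecke characters, weighted by the product of their conductors.

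Next, I would set up the Dirichlet series $D(A, s) = \sum_{\rho} |\Disc(\rho)|^{-s}$ where the sum is over surjections $\rho: G_k \twoheadrightarrow A$ modulo $\Aut(A)$. Using Möbius inversion over the lattice of subgroups $B \subset A$ to pass from arbitrary homomorphisms to surjections, it suffices to handle the sum over all homomorphisms $G_k \to A$. This sum factors as an Euler product: at each finite place $p$ of $k$, the local factor is a finite sum $\sum_{\phi_p: G_{k_p} \to A} |\disc_p(\phi_p)|^{-s}$ running over local homomorphisms, which is a rational function in $|p|^{-s}$. The global sum must be corrected by contributions from the group of global units to account for the difference between idelic and global characters.

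The third step is locating the rightmost singularity. For each nontrivial $\chi \in \hat{A}$ of order $d$, the local factor attached to tame ramification gives $\sim |p|^{-s(d-1)/d}$, so the product of factors associated to the whole $\hat{A}$-orbit behaves like $|p|^{-s \cdot \ind(\chi)}$ where $\ind$ measures the contribution to the global discriminant. Minimizing over nontrivial elements gives the abscissa $s = 1/a(A)$ with $a(A) = |A|(1 - 1/p_0)$ for $p_0$ the smallest prime dividing $|A|$. Grouping characters with minimal index into $G_k$-orbits (acting via the cyclotomic character on $\hat{A}$) produces a pole of order $b(k, A)$ at $s = 1/a(A)$, and a standard Tauberian theorem of the Delange/Ikehara type yields $N(A, X) \sim C X^{1/a(A)} (\ln X)^{b(k,A)-1}$.

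The main obstacle is the careful bookkeeping of the $G_k$-action on $\hat{A}$ to get the exponent $b(k,A)$ of $\ln X$ right: over $\mathbb{Q}$ this action is essentially trivial by Kronecker--Weber, but over a general $k$ Galois twists permute characters in a way that depends on which roots of unity lie in $k$, and one must check that the number of orbits of minimal-index characters coincides with Malle's definition of $b(k,A)$. A secondary technical issue is handling wild ramification at primes dividing $|A|$, where the local factors are no longer the simple tame expressions but are still rational in $|p|^{-s}$, so they only contribute to the constant $C$ and not to the main-term exponent.
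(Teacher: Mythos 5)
The paper does not actually prove this theorem; it is stated as a recall of Wright's result and cited directly as \cite{Wri89} (with \cite{M85} being the earlier special case over $\mathbb{Q}$). Your proposal is, in effect, a reconstruction of the Wright/M\"aki class-field-theory argument, and its overall architecture — conductor-discriminant formula, Dirichlet series with Euler product, M\"obius inversion over the subgroup lattice of $A$, locating the rightmost pole, Tauberian theorem — is the right one. Two points deserve more care than the sketch gives them. First, the ``correction by global units'' for the difference between $\mathrm{Hom}(J_k,A)$ and $\mathrm{Hom}(C_k,A)$ is not a minor bookkeeping step over a general $k$: the id\`ele class group is not a direct product of local factors, so one must pass (as Wood does in \cite{Woo10a}, Lemma 2.8, which the present paper also uses) to a presentation $C_k \simeq J_S/O_S^\times$ for a suitable finite $S$ containing generators of the class group, and show that dividing out by $O_S^\times$ only introduces a finite correction factor without moving the rightmost pole or changing its order. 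Second, matching the pole order to Malle's $b(k,A)$ is not automatic: Wright's count is organized around $G_k$-orbits on $\hat A$ via the cyclotomic action, while Malle's constant is defined via $G_k$-orbits on conjugacy classes (equivalently elements, since $A$ is abelian) of minimal index; one has to verify that these dual descriptions produce the same orbit count, which is a genuine (if routine) lemma and not merely notation. With those two gaps filled, the sketch gives the standard proof of the theorem.
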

       We will need to prove a uniformity estimate for $A$ extensions with certain local conditions. For an arbitrary integral ideal $q$ in $O_k$, define $N_q(A,X) = \sharp\{ K\mid \Disc(K/k)\le X, \Gal(K/k) = A, q| \disc(K/k) \}$. 
       \begin{theorem}
  	    Let $A$ be a finite abelian group and $k$ be a number field, then 
	        $$ N_q(A,X) \le O(C^{\omega(q)})(\frac{X}{|q|})^{1/a(A)} (\ln X)^{b(k, A)-1}$$
	    for an arbitrary integral ideal $q$ in $O_k$, where $C$ and the implied constant depends only on $k$ .
      \end{theorem}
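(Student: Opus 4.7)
The plan is to prove this uniformity statement via the Dirichlet series method combined with Wright's class field theoretic parametrization of abelian extensions. By global class field theory, $A$-extensions of $k$ correspond (up to $\operatorname{Aut}(A)$) to surjective continuous characters $\chi : C_k \to A$ of the idele class group, and the conductor-discriminant formula factors $\Disc(K_\chi/k) = \prod_p |p|^{d_p(\chi)}$ where the local exponent $d_p(\chi)$ is determined by the restriction $\chi|_{k_p^\times}$. This lets me write the generating series $\Phi(s) := \sum_K \Disc(K/k)^{-s}$ as a finite sum of Euler products whose local factor at $p$ has the form $F_p(s) = \sum_{\text{local }A\text{-ext of }k_p} |p|^{-s\, d_p}$. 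Wright's analysis already supplies meromorphic continuation past the rightmost pole at $s = 1/a(A)$, where the pole has order $b(k,A)$; this is essentially the input that gives the unrestricted Malle asymptotic for $N(A,X)$.

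For the restricted count I would write $\Phi_q(s) := \sum_{K:\, q \mid \disc(K/k)} \Disc(K/k)^{-s}$, obtained from $\Phi(s)$ by replacing each local factor $F_p(s)$ by its ramified part $F_p^{\mathrm{ram}}(s) := F_p(s) - 1$ at every $p \mid q$. The crucial local estimate is uniform in $p$: since any nontrivial inertia element has index at least $a(A)$ in the regular representation, the minimum discriminant exponent of a ramified local $A$-extension of $k_p$ is at least $a(A)$, and the number of such extensions realizing any given small value of $d_p$ is bounded by a constant (with only finitely many primes $p \mid |A|$ contributing an additional constant correction). Hence
$$|F_p^{\mathrm{ram}}(s)| \le C \cdot |p|^{-a(A)\operatorname{Re}(s)}$$
for some $C = C(A,k)$ independent of $p$, and in particular $|F_p^{\mathrm{ram}}(s)/F_p(s)| = O(|p|^{-1})$ on a neighborhood of $s = 1/a(A)$.

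It follows that $\Phi_q(s)$ admits the same meromorphic continuation as $\Phi(s)$ with a pole of order $b(k,A)$ at $s = 1/a(A)$, but with leading Laurent coefficient scaled by at most $C^{\omega(q)} |q|^{-1}$ relative to that of $\Phi(s)$. An effective Tauberian argument (a quantitative Perron-type inversion, using Wright's vertical growth bounds on $\Phi$) then converts this into
$$N_q(A,X) = O\bigl(C^{\omega(q)} |q|^{-1} X^{1/a(A)} (\ln X)^{b(k,A)-1}\bigr),$$
which is in fact stronger than the claimed inequality, since $|q|^{-1} \le |q|^{-1/a(A)}$ whenever $a(A) \ge 1$.

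The main obstacle is ensuring the Tauberian step is genuinely uniform in the side parameter $q$: the implicit constants in a standard Tauberian theorem depend on the full Dirichlet series, and here we need them independent of $q$ apart from the explicit $C^{\omega(q)} |q|^{-1}$ scaling. This is resolved by the observation that the modification at each $p \mid q$ only replaces $F_p(s)$ by a factor of strictly smaller modulus, so any vertical growth bound established by Wright on $\Phi(s)$ transfers automatically to $\Phi_q(s)$ with the scaling factor $C^{\omega(q)}|q|^{-1}$ preserved throughout the contour shift in Perron's formula.
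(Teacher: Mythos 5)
Your reduction to Euler products and the local estimate $|F_p^{\mathrm{ram}}(\sigma)| \le C|p|^{-a(A)\sigma}$ are both correct and indeed close in spirit to the paper's argument, which also works with the (finite sum of) Euler products coming from bounding homomorphisms $J_S \to A$. But there are two genuine gaps. First, replacing $F_p(s)$ by $F_p(s)-1$ at each $p\mid q$ only enforces the condition that $p$ ramifies, not that $p^{v_p(q)}\mid\disc(K/k)$; so for non-squarefree $q$ your resulting bound is $C^{\omega(q)}\prod_{p\mid q}|p|^{-1}$, not $C^{\omega(q)}|q|^{-1}$, and $\prod_{p\mid q}|p|^{-1}$ fails to be $\le |q|^{-1/a(A)}$ once any exponent $v_p(q)$ exceeds $a(A)$. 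Since the statement allows arbitrary integral ideals $q$ (and the application in the proof of Theorem \ref{Thm1} genuinely needs exponents $\expo(g_j)$ that can be $>a(A)$), this is not a cosmetic issue. You would need to replace $F_p$ by the tail $\sum_{d\ge v_p(q)} a_{p^d}|p|^{-ds}$, which is bounded by $O(|p|^{-v_p(q)\sigma})$. Second, the ``transfers automatically'' claim for the Tauberian step is too optimistic: $|F_p^{\mathrm{ram}}(s)|\le|F_p(s)|$ holds on the real axis by positivity but is not clear on a vertical contour, and the simple Rankin trick (take $\sigma = 1/a(A)+1/\ln X$) gives $(\ln X)^{b}$ rather than $(\ln X)^{b-1}$, so one really does need a quantitative Perron argument with $q$-uniform vertical estimates, which your sketch does not supply.

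The paper sidesteps both issues with a short, purely combinatorial manipulation of the coefficients of the single Euler product $F_{S,A}(s)=\sum a_n n^{-s}$: since there is a uniform bound $M$ on $|a_{p^r}|$ and a uniform $R$ with $a_{p^r}=0$ for $r>R$, one sets $q_0=\prod_{p\mid q}p^R$ and writes, by multiplicativity,
\begin{equation*}
B_q(X)=\sum_{q\mid n\le X}a_n \le \sum_{q\mid d\mid q_0} a_d\, B(X/d) \le M^{\omega(q)}X^{1/a(A)}\ln^{b-1}X\sum_{q\mid d\mid q_0}d^{-1/a(A)} \le (MR)^{\omega(q)}\Bigl(\frac{X}{|q|}\Bigr)^{1/a(A)}\ln^{b-1}X,
\end{equation*}
invoking the asymptotic for the unrestricted $B$ only once (via Wood's Lemma 2.10) and otherwise using only $a_d\le M^{\omega(q)}$, the count $\le R^{\omega(q)}$ of admissible $d$, and $d\ge q$. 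This handles general $q$ and requires no uniform Tauberian input.
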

      \begin{proof}
      	We will follow the notation and the language of \cite{Woo10a} to describe abelian extensions. To get an upper bound of $A$-number fields, it suffices to bound on the number of continuous homomorphisms from the id\`ele class group $C_k\to A$. Similarly, for $A$-number fields with certain local conditions, it suffices to bound on the number of continuous homomorphisms from the id\`ele class group $C_k\to A$ satisfying certain local conditions. 
      	
      	
      	Let $S$ be a finite set of primes such that $S$ generates the class group of $k$, including infinite primes and possibly wildly ramified primes, i.e., primes above the prime divisors of $|A|$. Denote $J_k$ to be the id\`ele group of $k$, $J_S$ to be the id\`ele group with component $O_v^{\times}$ for all $v\notin S$ and $O_S^*$ to be $k^*\cap J_S$. By lemma 2.8 in \cite{Woo10a}, the id\`ele class group $ C_k = J_k/k^{\times}\simeq J_S/O_S^{\times} $. Therefore to bound the number of continuous homomorphisms $C_k\to A$, we can choose to bound the number of continuous homomorphisms $J_S\to A$. The Dirichlet series for $J_S\to A$ with respect to absolute discriminant is an Euler product, see \cite{Woo10a} section 2.4, 
      	\begin{equation}
      	\begin{aligned}	
      	F_{S, A} (s) & =\prod_{p\in S} (\sum_{\rho_p: k_p^*\to A} |p|^{-d(\rho_p)s})   \prod_{p\notin S}(\sum_{\rho_p: O_p^*\to A} |p|^{-d(\rho_p)s}) = \sum_n \frac{a_n}{n^{s}}
      	\end{aligned}
      	\end{equation}
      where $d(\rho_p)$ is the exponent of $p$ in the relative discriminant and can be determined by the tame inertia group at $p$, which is the image of $O_p^*$ in $A$. Lemma 2.10 \cite{Woo10a} shows that $F_{S,A}(s)$ has exactly the same right most pole with Dirichlet series for $A$-number fields at $s = \frac{1}{a(A)}$ with the same order $b(k,A)$.
      	
      	$F_{S,A}(s)$ is a nice Euler product: for all $p$-factor there is a uniform bound $M$ on the magnitude of coefficient $a_{p^r}$ and a uniform bound $R$ on $r$ such that $a_{p^r}$ is zero for $r>R$. Denote the counting function of $F_{S,A}(s)$ by $B(X) = \sum_{n \le X} a_n$. Then for a certain integer $q = \prod_i p_i^{r_i}$, denote $B_q(X) = \sum_{q|n<X} a_n$. Let $q_0 = \prod_i p_i^{R}$ then
      	\begin{equation}
      	\begin{aligned}	
         B_q(X) & = \sum_{q|d|q_0}  a_d \sum_{k, (d,k)= 1, dk<X} a_k \le  \sum_{q|d|q_0}  a_d  B(\frac{X}{d})
          \le  \sum_{q|d|q_0}  M^{\omega(q)} (\frac{X}{d})^{1/a(A)} \ln^{b(A)-1}X\\
         & = M^{\omega(q)} X^{1/a(A)}\ln ^{b(A)-1}X  \sum_{q|d|q_0} \frac{1}{d^{1/a(A)}}\\
         & \le (MR)^{\omega(q)} X^{1/a(A)}\ln^{b(A)-1} X \frac{1}{q^{1/a(A)}} = O(C^{\omega(q)})(\frac{X}{q})^{1/a(A)} \ln^{b(A)-1} X.
      	\end{aligned}
      	\end{equation}	
       We have $N_q(A,X)$ bounded by $B_{|q|}(X)$ for an arbitrary integral ideal $q$.
      \end{proof}

       \section{Proof of the Main Theorem}
        In this section, we prove our main results Theorem \ref{Thm1}.
       \begin{lemma}\label{unin}
       	For $n=3,4,5$, let $A$ be an abelian group satisfying the corresponding condition on $m = |A|$ in Theorem \ref{Thm1}. Then $\forall c\in A$ and $k\in S_n$ , 
       	 \begin{equation}
       	 \begin{aligned}	
       	 \ind(k, c) /m -\ind(k) + r_k \ge 1 
       	 \end{aligned}
       	 \end{equation}
       	 where the uniformity $O(X/|q|^{r_k})$ holds for $S_n$ degree $n$ extensions with $k$ as the inertia group at $p|q$. 
       \end{lemma}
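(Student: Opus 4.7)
I will prove the inequality $\ind(k,c)/m - \ind(k) + r_k \ge 1$ by a finite case analysis on the nontrivial conjugacy class of $k$ in $S_n$ for each $n\in\{3,4,5\}$. In every case the lower bound on $\ind(k,c)/m$ is already furnished by Lemma \ref{delta3}, \ref{delta4} or \ref{delta5}, and the task reduces to picking the strongest available uniformity exponent $r_k$ and doing the arithmetic. The exponent $r_k$ is either the specialised $r_k=2-\epsilon$ from Theorem \ref{uni3} (resp.\ Theorem \ref{uni4}) for the classes those theorems cover, or $r_k=4/15-\epsilon$ from Theorem \ref{uni5} for the totally ramified $5$-cycle, or a default exponent obtained from elementary conductor bounds combined with Malle's conjecture $N(S_n,X)=O(X)$ for the remaining classes.

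\textbf{The three cases.} For $n=3$ the nontrivial classes are $(12)$ and $(123)$. For $(12)$, Lemma \ref{delta3} gives $\ind((12),c)/m>2$, which more than absorbs $\ind(k)=1$ with the default $r_k$. For $(123)$, where Lemma \ref{delta3} delivers only the strict bound $\ind((123),c)/m>1$, one must invoke Theorem \ref{uni3} to obtain $r_k=2-\epsilon$, reducing the claim to $\ind((123),c)/m\ge 1+\epsilon$. For $n=4$ the classes $(12)$ and $(123)$ go through by Lemma \ref{delta4} with the default $r_k$, while the overramified classes $(12)(34)$ and $(1234)$ force the use of Theorem \ref{uni4} with $r_k=2-\epsilon$; here Lemma \ref{delta4}'s bounds $\ind((12)(34),c)/m>1$ and $\ind((1234),c)/m>2$ are exactly what is needed to close the argument. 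For $n=5$ Lemma \ref{delta5} provides the uniform bound $\ind(k,c)/m-\ind(k)\ge 1-1/7$ for every nonidentity $k\in S_5$, so the only thing to check is $r_k\ge 1/7$; for the $5$-cycle this follows from Theorem \ref{uni5} since $4/15>1/7$, and for every other class the default exponent easily exceeds $1/7$.

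\textbf{Main obstacle.} The only genuinely delicate steps are the two tight cases $(123)\in S_3$ and $(12)(34)\in S_4$, where the index bounds in Lemmas \ref{delta3} and \ref{delta4} are strict but come without an explicit quantitative margin. This is not a real obstacle: since $A$ is fixed, the rational number $\ind(k,c)/m$ ranges over a finite set, so a strict inequality $>1$ automatically upgrades to $\ge 1+\delta$ for some $\delta=\delta(A)>0$, and one simply takes the $\epsilon$ in Theorems \ref{uni3} and \ref{uni4} smaller than $\delta$. Every other case reduces to direct arithmetic, so the proof is essentially bookkeeping once the correct $r_k$ has been identified in each case.
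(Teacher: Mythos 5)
Your overall strategy matches the paper's (which simply says to "check" Lemmas \ref{delta3}--\ref{delta5} against Theorems \ref{uni3}--\ref{uni5}), and your "Main obstacle" paragraph correctly isolates and resolves the genuine subtlety that the index lemmas give strict inequalities $>$ while $r_k = 2-\epsilon$ or $4/15-\epsilon$ needs a quantitative margin; since $A$ is fixed, the set of values $\ind(k,c)/m$ is finite and the strict inequality upgrades to $\ge 1+\delta(A)$, which is exactly the right observation. The classification of which classes need a uniformity theorem versus which go through with $r_k=0$ is also correct for $n=3,4$: for $(12)\in S_3$ and $(12),(123)\in S_4$ the bounds in Lemmas \ref{delta3}, \ref{delta4} already give $\ind(k,c)/m - \ind(k) > 1$, while for $(123)\in S_3$ and the overramified classes in $S_4$ one genuinely needs the $2-\epsilon$ from Theorems \ref{uni3} and \ref{uni4}.

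The gap is in the $n=5$ case for $k$ not a $5$-cycle. You apply Lemma \ref{delta5}'s uniform bound $\ind(k,c)/m - \ind(k) \ge 1 - 1/7 = 6/7$ to all classes and conclude that $r_k \ge 1/7$ is required for every $k$; you then assert that "the default exponent easily exceeds $1/7$" for non-$5$-cycles, attributing it to "elementary conductor bounds combined with Malle's conjecture $N(S_n,X)=O(X)$". But $N(S_n,X)=O(X)$ only gives the trivial $r_k=0$, and the paper establishes no power-saving uniformity for partially ramified $S_5$ quintics (Theorem \ref{uni5} is only for the totally ramified class); so this step, as written, rests on a result that is neither proved nor cited and that $r_k=0$ does not supply. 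The correct way to close these cases is not to find a better $r_k$ but to use the sharper per-class bound that the proof of Lemma \ref{delta5} actually computes: $\ind(k,c)/m - \ind(k) = (5-\ind(k))\frac{p-1}{p} \ge (5-\ind(k))\cdot\frac{6}{7}$, which is $\ge 12/7 > 1$ whenever $\ind(k)\le 3$, i.e., for every nonidentity $k$ that is not a $5$-cycle. With that, $r_k=0$ suffices for all non-$5$-cycles, and Theorem \ref{uni5} is invoked only where it is both needed and available. In short, the statement of Lemma \ref{delta5} is weaker than what its proof yields; you should cite the displayed formula $(5-\ind(k))\frac{p-1}{p}$ rather than the flattened $6/7$, instead of positing an unestablished default uniformity.
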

       \begin{proof}
       	This can be checked by Lemma \ref{delta3}, \ref{delta4} and \ref{delta5} with Theorem \ref{uni3}, \ref{uni4} and \ref{uni5}.
       \end{proof}       
       Then we are going to prove the main results.     
       \begin{proof}[\textbf{Proof of Theorem \ref{Thm1}}]
    We will describe $S_n\times A$ number fields by pairs of $S_n$ degree $n$ field $K$ and $A$-number fields $L$ $$N(S_n\times A, X)  = \sharp \{ (K,L)| \Gal(K/k) \simeq S_n, \Gal(L/k) \simeq A, \Disc(KL)<X\}.$$ We will write $N(X)$ for short and omit the conditions $\Gal(K/k) \simeq S_n$ and $\Gal(L/k) \simeq A$ when there is no confusion. The equality holds since $S_n$ and odd abelian group have no isomorphic quotient. We will prove this result by three steps.\\
      
    1. \textbf{Estimate pairs by $\Disc(O_K O_L)$}. \\ 
    By Theorem \ref{dpro}, we can get a lower bound for $N(S_n\times A, X)$ by counting the number of pairs by $\Disc(O_KO_L)$. Denote $|A| = m$,  
    \begin{equation}
    \begin{aligned}	
     &N(S_n\times A, X) \\
     & \ge \sharp \{ (K,L)| \Gal(K/k) \simeq S_n, \Gal(L/k) \simeq A, \Disc(O_KO_L) = \Disc(K)^m \Disc(L)^n<X\}.
    \end{aligned}
    \end{equation}
    By Lemma \ref{proneq}, there exists $C_0$ such that $N(S_n\times A, X) \ge C_0 X^{1/m}$ asymptotically. We can get a better understanding of the constant $C_0$ in view of Dirichlet series. Let $f(s)$ be the Dirichlet series of $S_n$ cubic number fields, and $g(s)$ be the Dirichlet series of $A$-number fields. Then the Dirichlet series for $\{(K,L)\}$ with respect to $\Disc(K)^m\Disc(L)^n$ is $f(ms) g(ns)$. The analytic continuation and pole behavior of $f$ and $g$ are both well studied \cite{TT13,Wri89,Woo10a}. It has been shown that $f(s)$ has the right most pole at $s = \frac{1}{\ind(S_n)}= 1$ and $g(s)$ has the right most pole at $s = \frac{1}{\ind(A)}$. Recall that for $A$ arbitrary abelian group, $\frac{m}{\ind(A)} = \frac{p}{p-1}$ where $p$ is the minimal prime divisor of $|A|$, so $\frac{1}{m} > \frac{1}{n\ind(A)}$. Therefore the right most pole of $f(ms)g(ns)$ is at $s=\frac{1}{m}$, and the order of the pole is exactly the order of the pole of $f(s)$ at $s=1$, which is $1$. By Tauberian Theorem\cite{Nar83},
     \begin{equation}
     \begin{aligned}	
     \liminf_{X\to \infty} \frac{N(S_n\times A, X)}{X^{1/m}} \ge \text{Res}_{s=1}f\cdot g(\frac{n}{\ind(S_n)\cdot m}) = \text{Res}_{s=1}f\cdot g(\frac{n}{m}).
     \end{aligned}
     \end{equation}\\
     2. \textbf{Estimate pairs by $\Disc_Y(KL)$}.\\     
     Define $\Disc_Y$ to approximate $\Disc$ as follows:
     	\begin{equation}
     	\Disc_Y(KL) = 
     	\left\{
     	\begin{aligned}
     	\Disc_p(KL)                   &          & |p|\le Y  \\
     	\Disc_p(K)^m \Disc_p(L)^n &          & |p|>Y. \\
     	\end{aligned}
     	\right.
     	\end{equation}
    Recall that $\Disc_p$ means the norm of $p$-factor in the discriminant, while $\Disc_Y$, as described above, is an approximation of $\Disc$. The notation would be distinguished by whether the lower index is capital or little letter. 
    
    Define $N_Y(X) = \sharp\{(K,L)| \Disc_Y(KL)<X \}$. Since $\Disc_Y(KL) \ge \Disc(KL)$, as $Y$ gets larger, we get $N_Y(X) \le N(X)$ which is an increasingly better lower bound for $N(X)$. 
    
    To compute $N_Y(X)$, denote the set of primes smaller than $Y$ to be $\{p_i\}$ with $i=1,\cdots, n$. Let $\Sigma_1$ be a set containing a local \'etale extension over $k_{p_i}$ of degree $n$ for each $|p_i|<Y$ and $\Sigma = (\Sigma_1, \Sigma_2)$ contains a pair of local \'etale extension for each $p_i$. There are finitely many local \'etale extensions of degree $n$ and $m$, so there are finitely many different $\Sigma_i$'s and thus finitely many $\Sigma$'s for a certain $Y$. We will write $K\in \Sigma_1$ if for all $|p|\le Y$ $K_p$ as a local \'etale extension is in $\Sigma_1$. 
    
    For each $\Sigma_1$, we know counting result of $S_n$ cubic field \cite{BSW15} with finitely many local conditions $$N_{\Sigma_1}(S_n, X) = \sharp \{ K| \Gal(K/k) \simeq S_n, K\in \Sigma_1\}$$ and similarly for abelian extensions with in $\Sigma_2$\cite{M85,Wri89,Woo10a}. 
    
    
    
      We can relate $\Disc_Y(KL)$ and $\Disc(KL)$ for pairs $(K,L)\in \Sigma $,
     \begin{equation}
     \begin{aligned}	
     \Disc_Y(KL) & = \prod_{|p|\le Y} \Disc_p(KL) \prod_{|p|>Y} \Disc_p(K)^m \Disc_p(L)^n\\
     & = \Disc(K)^m \Disc(L)^n \prod_{|p|\le Y} \Disc_p(KL) \Disc_p(K)^{-m}\Disc_p(L)^{-n}\\
     & = \frac{\Disc(K)^m \Disc(L)^n }{d_\Sigma}
     \end{aligned}
     \end{equation}
     where $d_{\Sigma}$ is a factor only depending on $\Sigma$. We have seen in section 2 that at tamely ramified primes, $\Disc_p(KL)$ can be determined by inertia groups of $\tilde{K}$ and $\tilde{L}$, therefore it depends on $\Sigma$ at $p$. For wildly ramified primes, it suffices to see that $\Disc_p(KL)$ could be determined by $K_p$ and $L_p$. This is always true under taking product: if $\tilde{K}$ and $\tilde{L}$ have trivial intersection, we can get the map from absolute local Galois group $G_{k_p}$ to $S_n\times A$ by taking the product of such maps to $S_n$ and $A$. Then we get the precise local information for $KL$ including $\Disc_p(KL)$.
          
     Therefore $\Disc_Y(KL)\le X$ is equivalent to $\Disc(K)^m \Disc(L)^n\le d_\Sigma X$ for $(K,L)\in \Sigma$. Apply Lemma \ref{proneq} to $N_{\Sigma_1}(S_n, X)$ and $N_{\Sigma_2}(A, X)$, we get
     \begin{equation}\label{lbound}
        \begin{aligned}	
        \lim_{X\to\infty} \frac{N_Y(X)}{X^{1/m}} = C_Y.
        \end{aligned}
     \end{equation}    
    For each $Y$, $N_Y(X) \le N(X)$, therefore 
    \begin{equation}
    \begin{aligned}	
    \lim_{Y\to \infty} \lim_{X\to\infty} \frac{N_Y(X)}{X^{1/m}} = \lim_{Y\to\infty} C_Y \le \liminf_{X \to \infty} \frac{N(X)}{X^{1/m}}.
    \end{aligned}
    \end{equation}
    By definition of $N_Y$, $C_Y$ is monotonically increasing as $Y$ increases and will be shown to be uniformly bounded in next step. So this limit does exist and gives a lower bound.
    
    
    
    \textbf{3. Bound $N(X) - N_Y(X)$}\\   
    Our goal is to prove the other direction of the inequality \ref{lbound}.
     \begin{equation}
     \begin{aligned}	
     \lim_{Y\to\infty} C_Y \ge \limsup_{X \to \infty} \frac{N(X)}{X^{1/m}},
     \end{aligned}
     \end{equation}
     and thus
    \begin{equation}
    \begin{aligned}	
    \lim_{X\to\infty} \frac{N(X)}{X^{1/m}} = \lim_{Y\to \infty} \lim_{X\to \infty}\frac{N_Y(X)}{X^{1/m}}  = \lim_{Y\to\infty}C_Y .
    \end{aligned}
    \end{equation}
    To get an upper bound of $N(X)$ via $N_Y(X)$, we need to bound on $N(X)-N_Y(X)$. It suffices to show the difference is $o(X^{1/m})$. There are only finitely many wildly ramified primes, so they would only affect the constant but not the order.
    \begin{equation}
    \begin{aligned}	
    N(X) - N_Y(X) & = \sharp \{(K,L)| \Disc(KL) < X < \Disc_Y(KL)\}\\
    & = \sum_{\Sigma'} \sharp\{ (K,L)\in \Sigma'|\Disc(KL) < X < \Disc_Y(KL) \}\\
    \end{aligned}
    \end{equation} 
    where the local condition $\Sigma'$ is a little bit different from $\Sigma$ in last part. Each $\Sigma'$ specifies a finite set of primes $S = \{p_j\}$ and a pair of inertia groups at tame $p$ and a pair of ramified local \'etale extensions at wildly ramified $p$ for each $p$ in S. Denote the pair of local information by $(h_j, g_j)$ for each $p_j$. We will not write the index $j$ each time when there is no confusion. We write $(K,L)\in \Sigma '$ if $K_{p}$ and $L_{p}$ are in $\Sigma'$ for each $p\in S$, and are not ramified simultaneously outside $S$. Denote $\expo(\cdot)$ to be the corresponding exponent of $p$ in discriminant. At tame place, $\expo(\cdot) $ is equal to $\ind(\cdot)$ as described before. For $(K,L) \in \Sigma'$, we can relate precise $\Disc(KL)$ to the product, 
    \begin{equation}
    \begin{aligned}	
    \Disc(KL) = &\Disc(K)^m \Disc(L)^n \prod_{p\in S} |p|^{\expo(h_j, g_j) - m\cdot \expo(h_j)-n\cdot \expo(g_j)}\\
     = &\frac{\Disc(K)^m \Disc(L)^n }{d_{\Sigma'}}.
    \end{aligned}
    \end{equation}
    Each $\Sigma'$ summand is
    \begin{equation}\label{tildepro}
    \begin{aligned}	
    & \sharp\{ (K,L)\in \Sigma'|\Disc(KL) < X < \Disc_Y(KL) \}\\
     \le &\sharp\{ (K,L)\in \Sigma'|\Disc(KL) < X\}\\
     =  &\sharp\{ (K,L)\in \Sigma'|\Disc(K)^m \Disc(L)^n < Xd_{\Sigma'}\}\\
     = &\sharp\{ (K,L)\in \Sigma'|\prod_{p\notin S} \Disc_p(K)^m \Disc_p(L)^n< \frac{X}{\prod_{p\in S} |p|^{\expo(h_j,g_j)} }\}. \\
    \end{aligned}
    \end{equation} 
    Notice only  $\Sigma'$ summand where $\prod_{p\in S} |p|>Y$ is non-zero. Denote $\prod_{p\notin S}\Disc_p(K)$ by $\Disc_{res}(K)$. For a certain $\Sigma'$, define $q_k = \prod_{p\in S, I_p = <k>}' p$ where $\prod'$ means the product is taken only over tamely ramified $p$ in $\Sigma'$. Similarly we write $K\in \Sigma'$ if $K$ satisfies the local conditions specified at $S$ in $\Sigma'$. Then we can bound the number of $K\in \Sigma'$  
    \begin{equation}
    \begin{aligned}	
       & \sharp \{ K | K\in \Sigma', \Disc_{res}(K) \le X\} \\
    = & \sharp \{ K | K \in \Sigma' , \Disc(K)\le X \prod_{p\in S} |p|^{\expo(h_j)}\}\\
    = & O_{\epsilon}\left(\prod_{k}|q_k|^{-r_k} \prod_{p\in S} |p|^{\expo(h_j)}\right) X \\
    = & O_{\epsilon}\left (\prod_{k}|q_k|^{-r_k+\ind(k)}\right ) X.
    \end{aligned}
    \end{equation} 
    Here we can ignore wildly ramified primes since there are only finitely many wildly ramified primes and finitely many wildly ramified local \'etale extensions. Hence the discriminant at those primes are uniformly bounded by some constant. Similarly, 
    \begin{equation}
    \begin{aligned}	
    & \sharp \{ L | L\in \Sigma', \Disc_{res}(L) \le X\} \\
    =& \sharp \{ L | L \in \Sigma' , \Disc(L)\le X \prod_{p\in S} |p|^{\expo(g_j)}\}\\
    =& O_{\epsilon}\left ( (\prod_{p\in S} |p|^{\expo(g_j)})^{\epsilon}\right ) X^{1/a(A)}\ln^{b(A)}.\\
    \end{aligned}
    \end{equation}     
    Now apply Lemma \ref{proneq} to (\ref{tildepro}),
    \begin{equation}\label{meow1}
    \begin{aligned}	
    & \sharp\{ (K,L)\in \Sigma'|\Disc_{res}(K)^m \Disc_{res}(L)^n< \frac{X}{\prod_{p\in S} |p|^{\expo(h_j,g_j)} }\}\\
    \le & O_{\epsilon} \left (\prod_{k}|q_k|^{-r_k+\ind(k) +\epsilon}\right ) (\frac{X}{\prod_{p\in S} |p|^{\expo(h_j,g_j)}})^{1/m}\\
    \le & O_{\epsilon}  \left (\prod_{k}|q_k|^{-r_k+\ind(k) +\epsilon - \ind(k, g_j)/m}\right )X^{1/m}.
    \end{aligned}
    \end{equation} 
    Each $\Sigma'$ gives a list of $(q_k)$ of relatively prime ideals. Conversely, for each list $(q_k)$, there are at most $M^{\omega(\prod_k q_k)} = O_{\epsilon}(\prod_k q_k)^\epsilon $ many $\Sigma'$s, where $M$ is an upper bound of the number of possible tame inertia groups for $A$-extensions, then
      \begin{equation}
      \begin{aligned}	
      N(X) - N_Y(X) & \le \sum_{\Sigma'} \sharp\{ (K,L)\in \Sigma'|\Disc_{res}(K)^m \Disc_{res}(L)^n \le \frac{X}{\prod_{p\in S}|p|^{\expo(h_i, g_i)}}\}\\
      & \le X^{1/m} O_{\epsilon} \left (\sum_{(q_k), \prod_k |q_k|> Y}  \prod_k |q_k|^{\delta} \right )\\
      & \le X^{1/m} O_{\epsilon} (\sum_{|q|>Y} |q|^{\delta+\epsilon})
      \end{aligned}
      \end{equation} 
     where the for every $k$, the exponent $\delta$ is strictly smaller than $-1$ by Lemma \ref{unin} and $\epsilon$ is arbitrary small. Therefore the summation is convergent and $N(X) - N_Y(X)$ is $O(X^{1/m})$ which proves the boundedness of $C_Y$. Moreover,
     \begin{equation}
     \begin{aligned}	
     \lim_{Y\to \infty}\limsup_{X\to\infty}\frac{N(X) - N_Y(X)}{X^{1/m}}  
     \le \lim_{Y\to \infty} \sum_{|q|>Y} O_{\epsilon} (|q|^{\delta+\epsilon} )= 0,
     \end{aligned}
     \end{equation}
    therefore it proves that  
     \begin{equation}
     \begin{aligned}	
     & \limsup_{X\to\infty} \frac{N(X)}{X^{1/m}} \le \lim_{Y\to \infty}\left( \lim_{X\to\infty} \frac{N_Y(X)}{X^{1/m}} + \limsup_{X\to\infty}\frac{N(X) - N_Y(X)}{X^{1/m}}\right)\\
     = &\lim_{Y\to \infty}C_Y.\\
     \end{aligned}
     \end{equation}
    \end{proof}

\section{Acknowledgement}
I am extremely grateful to my advisor Melanie Matchett Wood for constant encouragement and many helpful discussions. I would like to thank Manjul Bhargava, J\"urgen Kl\"uners, Arul Shankar, Takashi Taniguchi, Frank Thorne and Jacob Tsimerman for helpful conversations. I would like to thank, in particular, Manjul Bhargava for a suggestion to improve the uniformity estimate, and Frank Thorne for a suggestion to improve the product lemma. I would also like to thank Manjul Bhargava, Evan Dummit, Gunter Malle, Arul Shankar, Takashi Taniguchi, Takehiko Yasuda for suggestions on an earlier draft. This work is partially supported by National Science Foundation grant DMS-$1301690$.

%
%

\Addresses
\end{document}